\selectfont\symbol{60}\fontencoding{\encodingdefault}}
\newcommand{\Iota}{\mathrm{I}}
\newcommand{\TeXmacs}{T\kern-.1667em\lower.5ex\hbox{E}\kern-.125emX\kern-.1em\lower.5ex\hbox{\textsc{m\kern-.05ema\kern-.125emc\kern-.05ems}}}
\newcommand{\assign}{\coloneqq}
\newcommand{\backassign}{=:}
\newcommand{\chapter}[1]{\medskip\bigskip

\noindent\textbf{\huge #1}}
\newcommand{\equallim}{\mathop{=}\limits}
\newcommand{\mathd}{\,\mathrm{d}}
\newcommand{\mathe}{\mathrm{e}}
\newcommand{\nobracket}{}
\newcommand{\nocomma}{}
\newcommand{\nospace}{}
\newcommand{\tmdfn}[1]{\textbf{#1}}
\newcommand{\tmem}[1]{{\em #1\/}}
\newcommand{\tmop}[1]{\ensuremath{\operatorname{#1}}}
\newcommand{\tmrsup}[1]{\textsuperscript{#1}}
\newenvironment{enumeratealpha}{\begin{enumerate}[a{\textup{)}}] }{\end{enumerate}}
\newenvironment{proof}{\noindent\textbf{Proof\ }}{\hspace*{\fill}$\Box$\medskip}
\newenvironment{tmparmod}[3]{\begin{list}{}{\setlength{\topsep}{0pt}\setlength{\leftmargin}{#1}\setlength{\rightmargin}{#2}\setlength{\parindent}{#3}\setlength{\listparindent}{\parindent}\setlength{\itemindent}{\parindent}\setlength{\parsep}{\parskip}} \item[]}{\end{list}}
\newtheorem{corollary}{Corollary}
\newtheorem{definition}{Definition}
\newtheorem{lemma}{Lemma}
\newcounter{nnnotation}
\newtheorem{notation*}[nnnotation]{Notation}
\newcounter{nnnote}
\newtheorem{note*}[nnnote]{Note}}
\newtheorem{remark}{Remark}}
\newtheorem{theorem}{Theorem}
\newcommand{\grs}{\ensuremath{\nabla_{s}}}
\newcommand{\gra}{\ensuremath{\nabla_{a}}}
\newcommand{\applicationspace}[1]{\quad}
\begin{document}

\begin{center}
\begin{LARGE}
A hierarchy of multilayered plate models
\end{LARGE}
\\[0.5cm]
\begin{large}
Miguel de Benito Delgado\footnote{Universit{\"a}t Augsburg, Germany, {\tt m.debenito.d@gmail.com}} and 
Bernd Schmidt\footnote{Universit{\"a}t Augsburg, Germany, {\tt bernd.schmidt@math.uni-augsburg.de}}
\end{large}
\\[0.5cm]
\today
\\[1cm]
\end{center}

%\subjclass[2010]{XXX, XXX, XXX}
%\keywords{XXX, XXX, XXX} 
 
\begin{abstract}
We derive a hierarchy of plate theories for heterogeneous multilayers from three dimensional nonlinear elasticity by means of $\Gamma$-convergence. We allow for layers composed of different materials whose constitutive assumptions may vary significantly in the small film direction and which also may have a (small) pre-stress. By computing the $\Gamma$-limits in the energy regimes in which the scaling of the pre-stress is non-trivial, we arrive at linearised Kirchhoff, von K{\'a}rm{\'a}n, and fully linear plate theories, respectively, which contain an additional spontaneous curvature tensor. The effective (homogenised) elastic constants of the plates will turn out to be given in terms of the moments of the pointwise elastic constants of the materials.
\end{abstract}

\tableofcontents

%-----------------------------------------------------------------------------------------
%-----------------------------------------------------------------------------------------
%-----------------------------------------------------------------------------------------
\section{Introduction}\label{sec:intro}
%-----------------------------------------------------------------------------------------

The derivation of effective theories for thin structures such as beams, rods, plates and shells is a classical problem in continuum mechanics. Fundamental results in formulating adequate dimensionally reduced theories for three-dimensional elastic objects have already been obtained by Euler \cite{Euler}, Kirchhoff \cite{Kirchhoff} and von K{\'a}rm{\'a}n \cite{vonKarman}, cf.\ also \cite{Love,ciarlet_mathematical_1997,ciarlet_mathematical_2000}. 

A physical plate, given by a  domain $\Omega_h = \omega \times (- h / 2, h / 2) \subset \mathbb{R}^3$, is identified with a hyperelastic body of height $h$ ``much smaller'' than the lengths of the sides of $\omega$. The plane domain $\omega \subset \mathbb{R}^2$ constitutes the {\em mid-layer} of the plate. We assume that the body has a (possibly non-homogeneous)
stored energy density $W$ (precise conditions on $W$ will be specified later)
and, after deformation by $\tilde{y} : \Omega_h \to \mathbb{R}^3$, the total {\em elastic energy} 
\[ E_h (\tilde{y}) =
   \int_{\Omega_h} W (z, \nabla \tilde{y} (z)) \mathd z. \] 
The problem amounts to identifying {\em effective} functionals in the limit $h \to 0$ operating on dimensionally reduced deformations of the mid-plane. In spite of its long history, rigorous results in this direction relating classical models for plates to the parent three-dimensional elasticity theory have only been obtained comparably recently. 

In order to avoid working on 
a changing domain, a rescaling $x_3 = z_3 / h$ is performed to obtain a fixed
$\Omega_1$. We set $z_h (x_1, x_2, x_3) = (x_1, x_2, hx_3)$ and we consider
instead of a deformation $\tilde{y} : \Omega_h \rightarrow \mathbb{R}^3$, the
{\em rescaled} one $y_h : \Omega_1 \rightarrow \mathbb{R}^3, y_h (x) =
\tilde{y} (z_h (x))$. We define the 
{\em energy per unit volume} as $J_h = \frac{1}{h} E_h$, which after a
change of variables can be seen to be
\[ J_h (y) = \int_{\Omega_1} W (x, \nabla_h y) \mathd x, \]
where $\nabla_h = (\partial_1, \partial_2, \partial_3 / h)$.

After first results in linear elasticity had been established, see 
{\cite{acerbi_thin_1988,anzellotti_dimension_1994}}, perhaps the first 
work to derive a non-linearly elastic, lower dimensional
theory with a rigorous analysis using variational convergence was
{\cite{acerbi_variational_1991}} for the case of strings. In the context of
nonlinear plates we consider the rescaled functionals 
\[ J^{\beta}_h (y) = \frac{1}{h^{\beta}}  \int_{\Omega_1} W (\nabla_h y) . \]

For {$\beta = 0$}, inspired by the work in {\cite{acerbi_variational_1991}}, 
in {\cite{ledret_nonlinear_1995}} a {\em non-linear membrane theory} is derived. 
The range $\beta \in (0, 1)$ is the so-called {\em constrained membrane} regime, 
analysed in detail in {\cite{conti_lowenergy_2004}}. To the best of 
our knowledge, the regime $\beta \in [1, 2)$ remains not very well 
explored, except under certain kinds of boundary
conditions or assumed admissible deformations, see, e.g., 
{\cite{belgacem_energy_2002}} and the work in {\cite{conti_confining_2008}}. 

Most significant in view of our setup are the contributions to the cases 
$\beta \ge 2$. In {\cite{friesecke_theorem_2002}} Friesecke, James 
and M{\"u}ller prove the fundamental
{\em geometric rigidity} estimate which carries Korn's inequality to
the nonlinear setting and utilise it to obtain the non-linear Kirchhoff theory
of pure bending under an isometry constraint in case {$\beta = 2$}. 
This estimate is at the
core of most of the later developments in this area. In their seminal paper
{\cite{friesecke_hierarchy_2006}}, the same authors exploit the quantitative
geometric rigidity estimate of {\cite{friesecke_theorem_2002}} in a systematic
investigation of limits for the whole range of scalings $\beta \in [2,
\infty)$, deriving the first {\em hierarchy} of limit models. They also
provide a thorough (albeit succinct) overview of the state of the art around
2006. The lecture {\cite[Chapter 2]{dacorogna_vectorvalued_2017}} 
provides a nice waltkthrough of this paper, as well as abundant
references and open problems as of 2017.

This variational approach has been extended and revisited in a variety of 
different contexts, among them more complex shell geometries {\cite{fjmm}}, 
more basic atomistic models {\cite{Schmidt:06,BraunSchmidt:19}}, or more 
complicated material properties as incompressibility 
{\cite{ContiDolzmann:09}}, brittleness {\cite{Schmidt:17}} or oscillatory 
dependence on the space variable 
{\cite{NeukammVelvic13,HornungNeukammVelvic14,HornungPawelczykVelvic18}}. 
Moreover, 
the convergence of equilibria and even dynamic solutions have been established 
{\cite{MuellerPakzad:08,MoraMuellerSchultz:07,AbelsMoraMueller:11}}. 

The focus in this contribution is on materials whose reference configuration is
subjected to stresses (one speaks of {\em pre-strained} or
{\em pre-stressed} bodies) and whose energy density exhibits a dependence
on the out-of-plane direction (modelling {\em multilayered} plates).
Examples of these situations are heated materials, crystallisations on top of
a substrate and multilayered plates. 

For $\beta = 2$ the second author derived in 
{\cite{schmidt_minimal_2007,schmidt_plate_2007}} an effective Kirchhoff theory 
for stored energy densities of the form $W (x_3, F) =
W_0 (x_3, F (I + hB^h (x_3)))$, depending explicitly on the out-of-plane 
coordinate $x_3$ and a ``mismatch tensor'' $B^h(x_3)$ which measures the deviation 
of the energy well $\operatorname{argmin} W(x_3, \cdot)$ from the rigid motions 
$\operatorname{argmin} W_0(x_3, \cdot) = \mathrm{SO}(3)$. We remark that the regime 
$\beta = 2$ is precisely adapted to capture the effects of a misfit 
$hB^h$ scaling linearly $h$. In the simplest case with linearly changing 
$B^h(x_3) = a x_3 \tmop{Id}$  
one obtains a $\Gamma$-limit $I_{\rm Ki}$ with 
\[ I_{\rm Ki} (y) \equallim  \frac{1}{24}  \int_{\omega} Q
   (\tmop{II} - a_1 \tmop{Id}) - a_2 \, \mathd x,  \]
if $y \in \mathcal{A}$ (and $I_{\rm Ki}(y) = + \infty$ if not), where $\mathcal{A}$ 
is a suitable class of admissible deformations (isometric 
immersions). $Q$ is a quadratic form acting on the shape tensor $\tmop{II}$ 
(the second fundamental form of $y$). The coefficients of $Q$ and the numbers 
$a_1, a_2$ can be explicitly computed. In {\cite{schmidt_minimal_2007,schmidt_plate_2007}} 
also a thorough investigation of the shape of energy minimisers (for free boundary 
conditions) is provided which shows that the optimal configurations are 
rolled-up portions of cylinders whose winding direction is determined by 
the material parameters and the misfit tensor. 

The main goal of our work is to extend such an analysis to the energy regimes 
$\beta > 2$ in order to allow for more general pre-strain scalings of the form 
$h^{\alpha-1} B^h (x_3)$, $\alpha > 2$. A main source of  motivation are physical 
experiments which show that there are situations in which optimal configurations 
are spherical caps (paraboloids with positive Gau{\ss} curvature) rather than cylinders, \cite{SalamonMasters93,SalamonMasters95,FinotSuresh96,Freund00,KimLombardo08,EgunovKorvinkLuchnikov16}. 
We will see that indeed this discrepancy can be explained in terms of different 
energy scaling regimes, where the von K{\'a}rm{\'a}n scaling $\beta = 4$ is critical. 
In the present paper we lay the foundation for this by deriving effective plate theories 
for pre-strained multilayers. We analyse the functionals obtained here in depth in our companion paper {\cite{DeBenitoSchmidt:19b}}. 

Indeed there are previous results for $\beta = 4$ in particular. With the aim to model, e.g., growth processes in plants, in {\cite{lewicka_fopplvon_2011}}
the authors derive the von Kármán functional with a spontaneous curvature term for pre-stressed plates. However, their setup is not comparable to our situation. On the one hand, it is even more general as an explicit $(x_1, x_2)$ dependence of the misfit is allowed. On the other hand, there is no explicit $x_3$ dependence as would be necessary to model multilayers. Very recently, these results have been extended to other scalings and significantly $x_3$-dependent misfits, see {\cite{LewickaLucic18}}. However, the treatment of energy densities which may vary considerably in the thin film direction is, as we will see, subtle. A main source of technical difficulties is the fact that in our situation we can no longer expect the mid plane to follow the limiting plate deformation exactly. This phenomenon can be observed already in the simplest situation of a bilayer with one layer being much softer than the other. If rolled up, the unstretched plane will move into the stiffer layer, to an extent which depends on the local curvature. Moreover, we introduce an additional fine scale $\theta$ at a critical exponent, cf.\ below. 

Yet it turns out that in our setup the von Kármán case $\beta = 4$ is in fact a rather straightforward extension of {\cite{friesecke_hierarchy_2006,schmidt_plate_2007}}. The regime $\beta > 4$ is however a bit more involved. In contrast to the homogeneous case in {\cite{friesecke_hierarchy_2006}}, the dependence on the in-plane variable may be non-trivial so it cannot be discarded by setting it to $0$ without loss of generality. The scaling in the linearised Kirchhoff case $\beta \in (2,4)$ turns out to be the most difficult. In order to construct recovery sequences we need to provide a representation result for symmetric tensor fields on $\omega$ in terms of symmetrised gradients and solutions to the non-elliptic Monge-Ampere equation $\det \nabla^2 v = 0$, cf. Theorem~\ref{thm:representation-matrix}. In all cases the resulting effective functionals are explicitly computed with homogenised material constants that can be calculated from the first moments in $x_3$ of the individual elasticity constants of the various layers. 

From a modelling point of view, a main novelty is our introducing a new interpolating regime in between the linearised Kirchhoff case $\beta < 4$ and the fully linear case $\beta > 4$. This is motivated by our findings in \cite{DeBenitoSchmidt:19b} which show that minimisers (after rescaling) coincide for all $\beta \in (2, 4)$ (parts of a cylinder) and for all $\beta \in (4, \infty)$ (parts of a parabolic cap). We introduce a new scaling regime $\theta h^4$ with $\theta \in (0, \infty)$ and obtain von Kármán functionals that upon varying $\theta$ continuously connect the extreme cases $\theta \to 0$ and $\theta \to \infty$, which turn out to reduce to the functionals obtained for $\beta > 4$ and $\beta < 4$, respectively. In the simplest non-trivial example, the prototypical limit functional is of von Kármán type:
\begin{equation}
   \label{eq:simple-interpolating-functional}
    \mathcal{I}^{\theta}_{\rm vK} (u, v) = \frac{\theta}{2}  \int_{\omega}
    Q_2 (\grs u + \tfrac{1}{2} \nabla v \otimes \nabla v) \mathd x + \frac{1}{24}
    \int_{\omega} Q_2 (\nabla^2 v - I) \mathd x.
\end{equation}
In contrast to the cases $\beta \ne 4$ minimisers of this functional are not explicit. We discuss their behaviour in detail in \cite{DeBenitoSchmidt:19b}, in particular, how they interpolate in between $\beta < 4$ and $\beta > 4$.

\subsection*{Outline}

Having fixed the precise setup in Section \ref{sec:the-setting}, in Section 
\ref{sec:main-results} we present our main results: 
Theorem \ref{thm:gamma-hierarchy} on $\Gamma$-convergence in a hierarchy of 
energy scalings and Theorem \ref{thm:gamma-interpolating} on the asymptotic 
behaviour of the interpolating von Kármán functional for $\theta \rightarrow 0$ or 
$\theta \rightarrow \infty$. We then recall some basic results on compactness 
and explicit representations for the limit strains from 
{\cite{friesecke_hierarchy_2006}} in Section \ref{sec:compactness-hierarchy}. 
Proofs of lower and upper bounds in 
Theorem \ref{thm:gamma-hierarchy} are collected in Section 
\ref{sec:gamma-convergence-hierarchy}, where we obtain 
{\eqref{eq:simple-interpolating-functional}} and more general functionals. In
Section \ref{sec:gamma-convergence-interpolation} we show how the von Kármán
functional interpolates between different theories. Finally,
in Section \ref{sec:approximation-and-representation} we prove some density
and matrix representation theorems essential for the construction of recovery
sequences and identification of minimisers in the linearised Kirchhoff regime.

\subsection*{Notation}

We denote by $e_1, e_2, e_3$ the standard basis vectors in $\mathbb{R}^3$ and 
write $x = (x', x_3) \in \mathbb{R}^3, x' \in \mathbb{R}^2$. The spaces of 
symmetric and antisymmetric $n \times n$ matrices are 
$\mathbb{R}^{n \times n}_{\tmop{sym}}$ and $\mathbb{R}^{n \times n}_{\tmop{ant}}$, 
respectively. $A_{\tmop{sym}} = \tmop{sym} A 
= \tfrac{1}{2}  (A + A^{\top})$ is the symmetric part and $A_{\tmop{ant}} = 
\tmop{ant} A = \tfrac{1}{2}  (A - A^{\top})$ the antisymmetric part of a 
square matrix $A$.

Attaching a row and a column of zeros to a matrix $G \in \mathbb{R}^{2 \times 2}$ 
leads to $\hat{G} \assign \sum_{\alpha,\beta = 1}^2 G_{\alpha \nocomma \beta} 
e_{\alpha} \otimes e_{\beta} \in \mathbb{R}^{3 \times 3}$, conversely, 
$\check{B} \in \mathbb{R}^{2 \times 2}$ 
is the matrix resulting from the deletion of the third row and column of any 
$B \in \mathbb{R}^{3 \times 3}$. If $Q(\cdot)$ is a quadratic form, we denote 
the associated bilinear form by $Q[ \cdot, \cdot]$.

For $f : \mathbb{R}^3 \rightarrow \mathbb{R}$ a scalar function 
$\nabla f = (\partial_1 f, \partial_2 f, \partial_3 f)^{\top}$ is a column 
vector, whereas for $y : \mathbb{R}^3 \rightarrow \mathbb{R}^3$ we have 
$\nabla y \in \mathbb{R}^{3 \times 3}$ with rows $\nabla^{\top} y_i$, i.e.,  
$(\nabla y)_{i \nocomma j} = y_{i, j} 
= \partial_j y_i, i, j \in \{ 1, 2, 3 \}$. Its left $3 \times 2$ 
submatrix is $\nabla' y$, its rescaled gradient $\nabla_h y 
= ( \partial_1 y, \partial_2 y, 1 / h \, \partial_3 y )$. Moreover, 
$\grs u = \frac{1}{2} (\nabla u + \nabla^{\top} u)$, is the symmetrised 
gradient of $u : \mathbb{R}^2 \rightarrow \mathbb{R}^2$, $\nabla^2 v$ 
the Hessian matrix of $v : \mathbb{R}^n \rightarrow \mathbb{R}$.   

Let $\omega \subset \mathbb{R}^2$. We set 
$\hat{\nabla} v \assign (\partial_1 v, \partial_2 v, 0)^{\top} \in
    \mathbb{R}^3$ for $v : \omega \rightarrow \mathbb{R}$, 
$\hat{\nabla} u \assign \sum_{\alpha,\beta = 1}^2 (\nabla' u)_{\alpha \nocomma \beta} e_{\alpha}
    \otimes e_{\beta} \in \mathbb{R}^{3 \times 3}$ for $u : \omega \rightarrow \mathbb{R}^2$ and 
$\hat{\nabla} b \assign \sum_{\alpha = 1}^3 \sum_{\beta = 1}^2 (\nabla' b)_{\alpha \nocomma \beta} e_{\alpha}
    \otimes e_{\beta} \in \mathbb{R}^{3 \times 3}$ for $b : \omega \rightarrow \mathbb{R}^3$. 

The norm on Sobolev spaces is $\| v \|_{k, p, \Omega} = \| v \|_{W^{k, p} (\Omega)}$. We will omit the 
domain when it is clear from the context. 

We abbreviate $A_{\theta} \assign \grs u_{\theta} 
+ \tfrac{1}{2} \nabla v_{\theta} \otimes \nabla v_{\theta}$, mostly in 
Section \ref{sec:gamma-convergence-interpolation} and set 
$(f)_{\omega} \assign \frac{1}{| \omega |}  \int_{\omega} 
f (x') \mathd x'$ is the average of $f$ over $\omega$.

%-----------------------------------------------------------------------------------------
%-----------------------------------------------------------------------------------------
%-----------------------------------------------------------------------------------------
\section{The setting}\label{sec:the-setting}
%-----------------------------------------------------------------------------------------

As described in Section \ref{sec:intro}, we consider a sequence of
increasingly thin domains $\Omega_h \assign \omega \times (- h / 2, h / 2) \in
\mathbb{R}^3$ and rescale them to
\[ \Omega_1 \assign \omega \times (- 1 / 2, 1 / 2) \subset \mathbb{R}^3 \]
where $\omega \subset
\mathbb{R}^2$ is bounded with Lipschitz boundary. As a consequence of the rescaling, instead of maps $\tilde{y} : \Omega_h \rightarrow
\mathbb{R}^3$, we consider the {\em rescaled deformations}
\[ y : \Omega_1 \rightarrow \mathbb{R}^3, x \mapsto y (x) = \tilde{y} (x_1, x_2,
   hx_3), \]
belonging to the space
\[ Y \assign W^{1, 2} (\Omega_1 ; \mathbb{R}^3) . \]
For each {\em scaling}\footnote{In the notation of Section \ref{sec:intro} 
we have $\beta = 2 \alpha - 2$ .}
\[ \alpha \in (2, \infty), \]
and for all deformations $y \in Y$, define the {\em scaled elastic energy}
per unit volume:
\begin{equation}
  \label{eq:scaled-elastic-energy} \mathcal{I}_{\alpha}^h (y) = \frac{1}{h^{2
  \alpha - 2}}  \int_{\Omega_1} W_{\alpha}^h (x_3, \nabla_h y (x)) \mathd x,
\end{equation}
where $\nabla_h = (\partial_1, \partial_2, \partial_3 / h)^{\top}$ is the
gradient operator resulting after the change of coordinates described in
Section \ref{sec:intro}. For the sake of conciseness, we will present
most results below for all scalings simultaneously, adding the parameter
$\alpha$ to much of the notation. The energy density for $\alpha \neq 3$ is
given by
\begin{equation*}
%  \label{eq:stored-energy-density} 
  W_{\alpha}^h (x_3, F) = W_0 (x_3, F (I +
  h^{\alpha - 1} B^h (x_3))), \applicationspace{1 \tmop{em}} F \in
  \mathbb{R}^{3 \times 3} .
\end{equation*}
where $B^h : \left( - 1 / 2, 1 / 2 \right) \rightarrow \mathbb{R}^{3 \times
3}$ describes the {\em internal misfit} and $W_0$ the stored energy
density of the reference configuration. In the regime $\alpha = 3$ we include
an additional parameter $\theta > 0$ controlling further the amount of misfit
in the model:\label{ref:introducing-theta}
\[ W_{\alpha = 3}^h (x_3, F) = W_0 \left( x_3, F \left( I + h^2  \sqrt{\theta}
   B^h (x_3) \right) \right), \applicationspace{1 \tmop{em}} F \in
   \mathbb{R}^{3 \times 3}, \]
and we later write $\tilde{B}^h = \sqrt{\theta} B^h$. Note that given the
choice $h^{\alpha - 1}$ for the scaling of the misfit, the fact that in the
limit it will be again scaled quadratically forces the choice of a scaling of
$h^{- 2 (\alpha - 1)}$ for the energy, since otherwise one would compute
trivial (vanishing or infinite) energies in the limits. This will become
apparent in the computation of the lower bounds in Theorem
\ref{thm:lower-bound}. Our assumptions for $B^h$ and $W_0$ are those of
{\cite[Assumption 1.1]{schmidt_plate_2007}}:

{\assumptions{\label{main-assumptions}}
\ \\ \vspace{-2ex}
\begin{enumeratealpha}
  \item %\label{assumption:W0-smooth}
  For a.e. $t \in \left( - 1 / 2, 1 / 2
  \right)$, $W_0 (t, \cdot)$ is continuous on $\mathbb{R}^{3 \times 3}$ and
  $C^2$ in a neighbourhood of $\tmop{SO} (3)$ which does not depend on $t$.
  
%  \item For all $F \in \mathbb{R}^{3 \times 3}$, $W_0 (\cdot, F) \in
%  L^{\infty} \left( \left( - 1 / 2, 1 / 2 \right) ; \mathbb{R} \right)$. 
  
  \item \label{assumption:Q3-ess-bounded}The quadratic form $Q_3 (t, \cdot) =
  D^2 W_0 (t, I) [\cdot, \cdot]$ is in $L^{\infty} \left( \left( - 1 / 2, 1 /
  2 \right) ; \mathbb{R}^{9 \times 9} \right)$.
  
  \item \label{assumption:modulus}The map
  \[ \omega (s) \assign \underset{- 1 / 2 < t < 1 / 2}{\tmop{ess} \sup} 
     \underset{| F | \leqslant s}{\sup}  |W_0 (t, I + F) - \tfrac{1}{2} Q_3
     (t, F) | \]
  shall satisfy $\omega (s) = o (s^2)$ as $s \rightarrow 0$.
  
  \item \label{property:frame invariance}For all $F \in \mathbb{R}^{3 \times
  3}$ and all $R \in \tmop{SO} (3)$
  \[ W_0 (t, F) = W_0 (t, RF) . \]

  \item \label{property:energy-well-growth-below}For a.e. $t \in \left( - 1 /
  2, 1 / 2 \right), W_0 (t, F) = 0$ if $F \in \tmop{SO} (3)$ and
  \[ \underset{- 1 / 2 < t < 1 / 2}{\tmop{ess} \inf} W_0 (t, F) \geqslant C
     \tmop{dist}^2  (F, \tmop{SO} (3)), \]
  for all $F \in \mathbb{R}^{3 \times 3}$ and some $C > 0$.
  
  \item $B^h \rightarrow B$ in $L^{\infty} \left( \left( - 1 / 2, 1 / 2
  \right) ; \mathbb{R}^{3 \times 3} \right)$.
\end{enumeratealpha}}

The Hessian
\[ Q_3 (t, F) \assign D^2 W_0 (t, I) [F, F] = \frac{\partial^2 W_0 (t,
   I)}{\partial F_{i \nocomma j} \partial F_{i \nocomma j}} F_{i \nocomma j} F
   \nocomma_{i \nocomma j}, \]
for $t \in \left( - 1 / 2, 1 / 2 \right), F \in \mathbb{R}^{3 \times 3}$ is
twice the quadratic form of linear elasticity theory, which results after a
linearisation of $W_0$ around the identity. By Assumption 
\ref{main-assumptions}.\ref{property:energy-well-growth-below} it is positive 
definite on symmetric matrices and vanishing on antisymmetric matrices. 
We note in passing two consequences of the above conditions. First, frame
invariance (Assumption \ref{main-assumptions}.\ref{property:frame invariance})
extends to the second derivative where defined, i.e.\ 
\[ D^2 W_0 (t, R) [FR, FR] = D^2 W_0 (t, I) [F, F] = Q_3 (t, F) . \]
Second, the energy $W_0$ grows at most quadratically in a neighbourhood of
$\tmop{SO} (3)$, i.e. for small $| F |$ it holds that: 
\begin{equation*}
  %\label{eq:quadratic-growth} 
  W_0 (t, I + F) \leqslant C \tmop{dist}^2 (I + F,
  \tmop{SO} (3)) .
\end{equation*}

Define $Q_2$ to be
the quadratic form on $\mathbb{R}^{2 \times 2}$ obtained by relaxation of
$Q_3$ among stretches in the $x_3$ direction:
\[ Q_2 (t, G) \assign \underset{c \in \mathbb{R}^3}{\min} Q_3 (t, \hat{G} + c
   \otimes e_3) \text{, for } t \in \left( - 1 / 2, 1 / 2 \right), G \in
   \mathbb{R}^{2 \times 2}, \]
where $e_3 = (0, 0, 1) \in \mathbb{R}^3$. (See the last paragraph of Section 
\ref{sec:intro} for the definition of $\hat{G}$.) This process effectively minimises
away the effect of transversal strain. Solving the minimisation problem yields a 
map $\mathcal{L}: I \times \mathbb{R}^{2 \times 2} \rightarrow \mathbb{R}^3$, 
linear in its second argument, which attains the minimum: 
\begin{equation}\label{eq:mapping-L}
  Q_2 (t, G) = Q_3 (t, \hat{G} +\mathcal{L} (t, G) \otimes e_3) . 
\end{equation}
In particular, also the $Q_2 (t, \cdot)$ are positive definite on symmetric matrices 
and vanishing on antisymmetric matrices. In fact, by Assumption 
\ref{main-assumptions}.\ref{assumption:Q3-ess-bounded} and 
\ref{main-assumptions}.\ref{property:energy-well-growth-below} we have the bounds 
\begin{equation}\label{eq:Q2-L-bounds}
  Q_2 (t, F) \gtrsim | F |^2 ~~\forall \, F \in \mathbb{R}^{2 \times 2}_{\rm sym} 
  \quad\text{ and }\quad
  | \mathcal{L} (t, F) | \lesssim | F | ~~\forall \, F \in \mathbb{R}^{2 \times 2}
\end{equation}
uniformly in $t \in (-1/2,1/2)$. 

For the regimes $\alpha \geqslant 3$, we define the effective form
\begin{equation}
  \label{def:Q-bar} \overline{Q}_2 (E, F) \assign \int_{- 1 / 2}^{1 / 2} Q_2
  (t, E + tF + \check{B} (t)) \mathd t,
\end{equation}
with $E, F \in \mathbb{R}^{2 \times 2}$ (see the last paragraph of Section 
\ref{sec:intro} for the definition of $\check{B}$). For $\alpha \in (2, 3)$ we 
consider its relaxation
\begin{equation}
  \label{eq:q2bar} \overline{Q}^{\star}_2 (F) \assign \underset{E \in
  \mathbb{R}^{2 \times 2}}{\min}  \overline{Q}_2 (E, F) = \underset{E \in
  \mathbb{R}_{\tmop{sym}}^{2 \times 2}}{\min} \int_{- 1 / 2}^{1 / 2} Q_2 (t, E
  + tF + \check{B} (t)) \mathd t.
\end{equation}
For the case $\alpha = 3$, we include an additional parameter $\theta > 0$ as
discussed in page \pageref{ref:introducing-theta} and later write $\tilde{B} =
\sqrt{\theta} B$. Both $\overline{Q}_2$ and $\overline{Q}_2^{\star}$ are 
non-negative quadratic forms (see {\cite{DeBenitoSchmidt:19b}} for formulae 
explicitly relating these to $Q_3(t, \cdot)$, $t \in (- 1 / 2, 1 / 2 )$).

For fixed $\alpha \in (2, \infty)$ we say that a sequence $(y^h)_{h > 0}
\subset Y$ has {\em finite scaled energy} if there exists some constant $C
> 0$ such that
\begin{equation*}
  %\label{def:finite-scaled-energy-lki} 
  \underset{h \rightarrow 0}{\tmop{lsup}}
  \mathcal{I}_{\alpha}^h (y^h) \leqslant C.
\end{equation*}
This definition will be central for many of the arguments below. After some
corrections we will have precompactness of such sequences, thus essentially
proving that the family $\mathcal{I}^h_{\alpha}$ is equicoercive, the
essential condition for the fundamental theorem of $\Gamma$-convergence
showing convergence of minimisers and energies. 
This compactness takes place in adequate target ambient spaces
\[ X_{\alpha} = \left\{\begin{array}{ll}
     W^{1, 2} (\omega ; \mathbb{R}) & \text{if } \alpha \in (2, 3),\\
     W^{1, 2} (\omega ; \mathbb{R}^2) \times W^{1, 2} (\omega ; \mathbb{R}) &
     \text{if } \alpha \geqslant 3,
   \end{array}\right. \]
equipped with the weak topology.\footnote{Because the weak
topology is not 1\tmrsup{st} countable, for $\Gamma$-convergence one argues
that one may consider bounded sets, where it is metrisable.}

An essential ingredient in arguments with $\Gamma$-convergence is the choice
of sequential convergence to obtain (pre-)compactness. For the lower bounds we may suppose that a
sequence $(y^h)_{h > 0}$ has finite scaled energy, which enables Lemma
\ref{lem:approx-rotations} for the identification of the limits. This requires
us to work with the corrected deformations $\rho (y^h) \assign
(\overline{R}^h)^{\top} y^h - \overline{c} ^h$, for some constants
$\overline{R}^h \in \tmop{SO} (3)$ and $\overline{c} ^h \in \mathbb{R}^3$
depending on $y^h$, see {\eqref{eq:corrected-deformations}}.\footnote{These
maps ``remove'' rigid movements from the $y^h$ bringing them close to the
identity. Note that the energy is not affected by this change because of frame
invariance (Assumption \ref{main-assumptions}.\ref{property:frame
invariance}).} We choose to encode this transformation into the definition of
{\tmem{$\Gamma$-convergence via maps $P^h_{\alpha}$}} (Definition
\ref{def:gamma-convergence-via-maps}) for general transformations $\rho$ with
arbitrary $R^h \in \tmop{SO} (3)$ and $c^h \in \mathbb{R}^3$. Despite adding
clutter to the notation, this helps to highlight and isolate the technical
requirement of the sequences involved with special rigid
transformations.\footnote{We only require that there be {\tmem{some}}
constants $R^h, c^h$ for $P^h$-convergence. In order to obtain compactness and
in the lower bounds we will take the specific ones given in Lemma
\ref{lem:approx-rotations} whereas for the recovery sequences we will use $R^h
= I, c^h = 0$.}

\begin{definition}
  \label{def:ph-maps}Let $Y \assign W^{1, 2} (\Omega_1 ; \mathbb{R}^3)$ and
  \[ X_{\alpha} \assign \left\{\begin{array}{lll}
       W^{1, 2} (\omega ; \mathbb{R}) & \text{if} & \alpha \in (2, 3),\\
       W^{1, 2} (\omega ; \mathbb{R}^2) \times W^{1, 2} (\omega ; \mathbb{R})
       & \text{if} & \alpha \geqslant 3.
     \end{array}\right. \]
  We say that a sequence $(y^h)_{h > 0} \subset Y$ {\tmdfn{$P^h$-converges}}
  to some $w \in X_{\alpha}$ if and only if there exist constants $R^h \in
  \tmop{SO} (3), c^h \in \mathbb{R}^3$ which define maps
  \[ \rho : Y \rightarrow Y, y^h \mapsto \rho (y^h) \assign (R^h)^{\top} y^h -
     c^h \]
  such that
  \[ P^h_{\alpha} (y^h) \rightarrow w \text{\quad weakly in } X_{\alpha}, \]
  where
  \[ P^h_{\alpha} : Y \rightarrow X_{\alpha}, y^h \mapsto
     \left\{\begin{array}{ll}
       v_{\alpha}^h, & \text{\quad if } \alpha \in (2, 3),\\
       (u_{\theta}^h, v_{\theta}^h) & \text{\quad if } \alpha = 3,\\
       (u_{\alpha}^h, v_{\alpha}^h), & \text{\quad if } \alpha > 3,
     \end{array}\right. \]
  and we defined:
  
  {\noindent}For $\alpha \neq 3$ and $x' \in \omega$, the
  {\tmdfn{scaled, averaged and corrected in-plane}} and {\tmdfn{out-of-plane
  displacements}}:
  \begin{equation}
    \label{def:scaled-in-out-of-plane-displacements} \left\{
    \begin{array}{lll}
      u_{\alpha}^h (x') & \assign & \frac{1}{h^{\gamma}}  \int_{- 1 / 2}^{1 /
      2} (\rho (y^h)' (x', x_3) - x') \mathd x_3,\\
      v_{\alpha}^h (x') & \assign & \frac{1}{h^{\alpha - 2}}  \int_{- 1 /
      2}^{1 / 2} \rho (y^h)_3 (x', x_3) \mathd x_3,
    \end{array} \right.
  \end{equation}
  where
  \[ \gamma = \left\{\begin{array}{rll}
       2 (\alpha - 2) & \text{if} & \alpha \in (2, 3),\\
       \alpha - 1 & \text{if} & \alpha > 3.
     \end{array}\right. \]
  For $\alpha = 3$ and $x' \in \omega$, introducing the
  additional parameter $\theta > 0$:
  \begin{equation}
    \label{def:scaled-in-out-of-plane-displacements-theta} \left\{
    \begin{array}{lll}
      u_{\theta}^h (x') & \assign & \frac{1}{\theta h^2}  \int_{- 1 / 2}^{1 /
      2} [\rho (y^h)' (x', x_3) - x'] \mathd x_3\\
      v_{\theta}^h (x') & \assign & \frac{1}{\sqrt{\theta} h}  \int_{- 1 /
      2}^{1 / 2} \rho (y^h)_3 (x', x_3) \mathd x_3 .
    \end{array} \right.
  \end{equation}
  For $\alpha = 3$, we overload the notation with the parameter $\theta$
  writing $(u_{\theta}^h, v_{\theta}^h)$ and $P_{\theta}^h$ instead of
  $(u_{\alpha}^h, v_{\alpha}^h)$ or $P^h_{\alpha}$, letting the letter used in
  the subindex resolve ambiguity.
\end{definition}

With Definition \ref{def:ph-maps} we can specify precisely what we mean by
$\Gamma$-convergence of the energies {\eqref{eq:scaled-elastic-energy}}:
\footnote{We refer to the notes {\cite{braides_handbook_2006}} for a quick
introduction to $\Gamma$-convergence.}

\begin{definition}
  \label{def:gamma-convergence-via-maps}Let $\alpha > 2$. We say that the
  family of scaled elastic energies $\{ \mathcal{I}_{\alpha}^h : Y \rightarrow
  \mathbb{R} \}_{h > 0}$, $h > 0$, {\tmdfn{$\Gamma$-converges via maps $P^h$
  to}} $\mathcal{I}_{\alpha} : X_{\alpha} \rightarrow \mathbb{R}$ iff:
  \begin{enumeratealpha}
    \item {\tmdfn{Lower bound:}} For every $w \in X_{\alpha}$ and every
    sequence $(y^h)_{h > 0} \subset Y$ which $P^h$-converges to $w$ as $h
    \rightarrow 0$ it holds that
    \[ \underset{h \rightarrow 0}{\tmop{linf}} \mathcal{I}_{\alpha}^h (y^h)
       \geqslant \mathcal{I}_{\alpha} (w) . \]
    \item {\tmdfn{Upper bound:}} For every $w \in X$ there exists a
    {\tmdfn{recovery sequence}} $(y^h)_{h > 0} \subset Y$ which
    $P^h$-converges to $w$ as $h \rightarrow 0$ and
    \[ \underset{h \rightarrow 0}{\tmop{lsup}} \mathcal{I}_{\alpha}^h (y^h)
       \leqslant \mathcal{I}_{\alpha} (w) . \]
  \end{enumeratealpha}
\end{definition}

Finally, we identify what the space of {\em admissible displacements} for
the limit theories will be:
\[ X^0_{\alpha} \assign \left\{\begin{array}{lll}
     W^{2, 2}_{s \nospace h} (\omega ; \mathbb{R}) & \text{if} & \alpha \in
     (2, 3),\\
     W^{1, 2} (\omega ; \mathbb{R}^2) \times W^{2, 2} (\omega ; \mathbb{R}) &
     \text{if} & \alpha \geqslant 3,
   \end{array}\right. \]
where the space of out-of-plane displacements with singular Hessian
\[ W^{2, 2}_{s \nospace h} (\omega) \assign \left\{ v \in W^{2, 2} (\omega ;
   \mathbb{R}) : \det \nabla^2 v = 0 \text{ a.e.} \right\}, \]
will be central in the linearised Kirchhoff theory. We will define the
functionals to be $+ \infty$ for inadmissible displacements in $X_{\alpha}
\backslash X_{\alpha}^0$.

%-----------------------------------------------------------------------------------------
%-----------------------------------------------------------------------------------------
%-----------------------------------------------------------------------------------------
\section{Main results}\label{sec:main-results}
%-----------------------------------------------------------------------------------------

Our first goal is to prove that in the pre-strained setting described above one
has a hierarchy of plate models à la 
{\cite{friesecke_hierarchy_2006}}. The proof is split into several theorems in
Section \ref{sec:gamma-convergence-hierarchy}. For notation we refer to the end of Section \ref{sec:intro}, for details on our
particular use of $\Gamma$-convergence, see Definition 
\ref{def:gamma-convergence-via-maps}.

\begin{theorem}[Hierarchy of effective theories]
  \label{thm:gamma-hierarchy}Let
  \[ \mathcal{I}_{\alpha}^h (y) = \frac{1}{h^{2 \alpha - 2}}  \int_{\Omega_1}
     W_{\alpha}^h (x_3, \nabla_h y (x)) \mathd x. \]
  If $\alpha \in (2, 3)$ and $\omega$ is convex, then the elastic
  energies $\mathcal{I}_{\alpha}^h$ $\Gamma$-converge to the
  {\em linearised Kirchhoff energy}\footnote{Convexity of the domain is
  required for the representation theorems in Section
  \ref{sec:approximation-and-representation} which are used in the
  construction of the recovery sequence for $\alpha \in (2, 3)$.}
  \begin{equation}
    \label{eq:energy-lki} \mathcal{I}_{\rm lKi} (v) \assign
    \left\{\begin{array}{rl}
      \frac{1}{2}  \int_{\omega} \overline{Q}_2^{\star} (- \nabla^2 v) &
      \text{ if } v \in W^{2, 2}_{s h} (\omega),\\
      \infty & \text{ otherwise},
    \end{array}\right.
  \end{equation}
  where $\overline{Q}^{\star}_2$ is defined in {\eqref{eq:q2bar}}. See
  Theorems \ref{thm:lower-bound} and \ref{thm:upper-bound-lki}.
  
  If $\alpha = 3$ and $\theta > 0$ then the energies $\mathcal{I}_{\theta}^h
  \assign \frac{1}{\theta} \mathcal{I}_{\alpha = 3}^h$ $\Gamma$-converge to
  the {\em von Kármán type energy}\footnote{Again, we slightly overload
  the notation in what would be a double definition of $\mathcal{I}_3^h$,
  trusting the letter used in the subindex to dispel the ambiguity.}
  \begin{equation}
    \label{eq:energy-vk} \mathcal{I}^{\theta}_{\rm vK} (u, v) \assign
    \left\{\begin{array}{l}
      \frac{1}{2}  \int_{\omega} \overline{Q}_2 (\theta^{1 / 2} (\grs u +
      \tfrac{1}{2} \nabla v \otimes \nabla v), - \nabla^2 v)\\
      \text{{\hspace{5em}}if } (u, v) \in W^{1, 2} (\omega ; \mathbb{R}^2)
      \times W^{2, 2} (\omega ; \mathbb{R}),\\
      \infty, \text{ otherwise},
    \end{array}\right.
  \end{equation}
  where $\overline{Q}_2$ is defined in {\eqref{def:Q-bar}}. See Theorems
  \ref{thm:lower-bound} and \ref{thm:upper-bound-vk}.
  
  Finally, if $\alpha > 3$ then $\mathcal{I}_{\alpha}^h$ $\Gamma$-converges to
  the {\em linearised von Kármán energy}
  \begin{equation}
    \label{eq:energy-lvk} \mathcal{I}_{\rm lvK} (u, v) \assign
    \left\{\begin{array}{l}
      \frac{1}{2}  \int_{\omega} \overline{Q}_2 \left( \grs u, - \nabla^2 v
      \right),\\
      \text{{\hspace{5em}}if } (u, v) \in W^{1, 2} (\omega ; \mathbb{R}^2)
      \times W^{2, 2} (\omega ; \mathbb{R})\\
      \infty, \text{ otherwise} .
    \end{array}\right.
  \end{equation}
  See Theorems \ref{thm:lower-bound} and \ref{thm:upper-bound-lvk}. 

  Moreover, in all cases $\alpha > 2$ there exists a subsequence 
  (not relabelled) such hat $(y^h)_{h > 0}$ $P^h$-converges to 
  $v \in X_{\alpha}$ (if $\alpha \in (2,3)$), respectively 
  $(u, v) \in X_{\alpha}$ (if $\alpha \ge 3$), see Lemma 
  \ref{lem:approx-rotations}. 
\end{theorem}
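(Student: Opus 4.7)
The plan is to follow the standard three-part template for $\Gamma$-convergence (compactness, lower bound, recovery sequence) and to carry out the three regimes in parallel as far as possible. In fact, the theorem is really an assembly statement: the compactness in Lemma~\ref{lem:approx-rotations}, the uniform liminf in Theorem~\ref{thm:lower-bound}, and the three recovery constructions in Theorems~\ref{thm:upper-bound-lki}, \ref{thm:upper-bound-vk}, \ref{thm:upper-bound-lvk} furnish the pieces; the content of the present statement is to package them correctly for each $\alpha$.

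First, for the compactness statement, I would apply the Friesecke--James--Müller quantitative geometric rigidity estimate to a sequence $(y^h)$ with $\mathcal{I}_\alpha^h(y^h) \le C$. Together with Assumption~\ref{main-assumptions}.\ref{property:energy-well-growth-below} this yields piecewise-constant rotation fields $R^h \colon \omega \to \mathrm{SO}(3)$ with $\|\nabla_h y^h - R^h\|_{L^2} = O(h^{\alpha-1})$. Averaging produces $\overline{R}^h\in\mathrm{SO}(3)$ and, after a translation $\overline{c}^h$, the corrected deformations $\rho(y^h) = (\overline{R}^h)^\top y^h - \overline{c}^h$ from~\eqref{eq:corrected-deformations}. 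The scaling exponents $\gamma$ and $\alpha-2$ in \eqref{def:scaled-in-out-of-plane-displacements}--\eqref{def:scaled-in-out-of-plane-displacements-theta} are chosen precisely so that $(u_\alpha^h, v_\alpha^h)$ (resp.\ $v_\alpha^h$ alone when $\alpha\in(2,3)$) is uniformly bounded in the Sobolev norm of $X_\alpha$; weak compactness then delivers a $P^h$-convergent subsequence.

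For the lower bound I would write $\nabla_h y^h = R^h(x')(I + h^{\alpha-1} G^h)$ with $G^h$ bounded in $L^2(\Omega_1; \mathbb{R}^{3\times 3})$, so that a subsequence $G^h \rightharpoonup G$. Frame invariance reduces $W_\alpha^h(x_3, \nabla_h y^h)$ to $W_0(x_3, (I + h^{\alpha-1} G^h)(I + h^{\alpha-1} B^h))$, and Assumption~\ref{main-assumptions}.\ref{assumption:modulus} allows one to replace this by $\tfrac{h^{2\alpha-2}}{2} Q_3(x_3, G^h + B^h)$ modulo an error $o(h^{2\alpha-2})$ in $L^1$. The identification of the limit strain in the $(x_1, x_2)$-block — namely $\grs u - x_3\nabla^2 v + \check{B}$ for $\alpha>3$, with an additional $\tfrac{1}{2}\nabla v\otimes\nabla v$ for $\alpha=3$ (arising because $v$ is of order $h$ rather than $h^{\alpha-2}$ with $\alpha-2<1$), and only $-x_3\nabla^2 v + \check{B}$ with a free in-plane part for $\alpha\in(2,3)$ — follows from Friesecke--James--Müller style representations of limit strains (Section~\ref{sec:compactness-hierarchy}). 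Lower semicontinuity of the convex quadratic $Q_3$ and relaxation in the third column via~\eqref{eq:mapping-L} collapse the 3D quadratic to $Q_2$, and integration in $x_3$ produces $\overline{Q}_2$ (or $\overline{Q}_2^\star$ after minimising over the free in-plane part).

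For the upper bound, given smooth $(u,v)$ I would use the ansatz
\[
y^h(x) = \begin{pmatrix} x' + h^{\gamma} u(x') - h^{\alpha-1} x_3 \nabla v(x') \\ h^{\alpha-2} v(x')\end{pmatrix} + h^\alpha d(x', x_3),
\]
with a transverse correction $d$ built from $\mathcal{L}$ of~\eqref{eq:mapping-L} to realise the minimum defining $Q_2$, and then extend by density. The main obstacle is the recovery sequence for $\alpha\in(2,3)$: one needs an in-plane field $u^h$ whose symmetrised gradient approaches the minimiser $E^\star$ in~\eqref{eq:q2bar} of $\overline{Q}_2(\,\cdot\,, -\nabla^2 v)$, and for $v\in W^{2,2}_{sh}(\omega)$ this optimal $E^\star$ is not in general of the form $\grs u$. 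Here Theorem~\ref{thm:representation-matrix} supplies a decomposition of symmetric tensor fields on convex $\omega$ into symmetrised gradients plus contributions compatible with the degenerate Monge--Ampère equation $\det\nabla^2 v = 0$; the convexity hypothesis on $\omega$ enters precisely at this point. Combining this representation with a smooth approximation argument in $W^{2,2}_{sh}(\omega)$ closes the construction and, together with Steps 1--2, proves the theorem.
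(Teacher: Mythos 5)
Your reading of Theorem~\ref{thm:gamma-hierarchy} as an assembly statement is correct, and your sketches of the compactness step (geometric rigidity giving $\|\nabla_h y^h - R^h\|_{L^2} = O(h^{\alpha-1})$, correction by a constant rigid motion, weak compactness in $X_\alpha$) and of the lower bound (frame invariance, Taylor expansion through $Q_3$, identification of limit strains, collapse to $Q_2$ via $\mathcal{L}$, and the further relaxation to $\overline{Q}_2^\star$ when $\alpha\in(2,3)$) follow the same route as the paper, modulo the fact that you gloss over the need for the cutoff onto the good set $\{|G^h|\le h^{-1/2}\}$ before the Taylor expansion can be applied.

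The genuine problem is your recovery-sequence ansatz in the regime $\alpha\in(2,3)$. The generic deformation
\[
 y^h(x) = \begin{pmatrix} x' + h^{\gamma} u(x') - h^{\alpha-1} x_3\,\nabla v(x')\\ h^{\alpha-2} v(x')\end{pmatrix} + h^{\alpha} d(x',x_3)
\]
works well for $\alpha\ge 3$, but for $\alpha\in(2,3)$ it produces an in-plane strain at order $h^{2(\alpha-2)}\gg h^{\alpha-1}$ whose leading contribution $\grs u + \tfrac12\nabla v\otimes\nabla v$ must cancel \emph{beyond} leading order, and with a polynomial ansatz the error terms do not decay fast enough for all $\alpha\in(2,3)$. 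The paper instead starts from an \emph{exact} $W^{2,2}\cap W^{2,\infty}$ isometric immersion
\[
 \overline{y}_{\varepsilon}(x') = \begin{pmatrix} x' + \varepsilon^2 u_{\varepsilon}(x')\\ \varepsilon v(x')\end{pmatrix},\qquad \varepsilon = h^{\alpha-2},
\]
constructed via \cite[Theorem 7]{friesecke_hierarchy_2006}, then perturbs along the exact unit normal $b_\varepsilon$ with an $x'$-dependent \emph{shift} $\alpha(x')$ and adds tangential and transverse correctors $g$, $d$, $D^h$:
\[
 y^h(x',x_3) = \overline{y}_{\varepsilon}(x') + h\,(x_3-\alpha(x'))\,b_{\varepsilon}(x') + \varepsilon h\,(g(x'),0) + \varepsilon h^2 \int_0^{x_3} d(x',\xi)\,\mathrm{d}\xi + D^h(x',x_3).
\]
Crucially, it is precisely the normal shift $\alpha(x')$ and the tangential corrector $g(x')$ that appear in the limiting strain as $\grs g + \alpha\,\nabla^2 v$, and Theorem~\ref{thm:representation-matrix} is invoked to show that such fields approximate the optimal $A_{\min}$ in $L^2$. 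Writing ``one needs a $u^h$ whose $\grs u^h$ approaches $E^\star$'' misses that $\grs u$ alone is provably \emph{not} enough; the out-of-plane shift providing the extra $\alpha\,\nabla^2 v$ term is the key new ingredient, and it cannot be captured by the ansatz you wrote down. Also note that, because on arms the mid-plane need not follow the limiting plate deformation, an additional corrector $D^h$ of order $h^2$ is needed to eliminate a remaining antisymmetric strain, something invisible in your sketch.

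So the missing idea is not the use of Theorem~\ref{thm:representation-matrix} in the abstract (you do identify it), but the structural form of the recovery sequence that realises its output as an admissible symmetric limit strain.
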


\begin{remark} \ \\ \vspace{-2ex}
\begin{enumerate}
\item We will not be considering body forces for simplicity, but including 
them in the analysis as in {\cite{friesecke_hierarchy_2006}} is straightforward.

\item A standard argument shows that almost minimisers of 
$\mathcal{I}_{\alpha}^h$ $P^h$-converge (up to subsequences) to minimisers of the 
limiting functional $\mathcal{I}_{\rm lKi}$, respectively $\mathcal{I}_{\rm vK}$, 
respectively $\mathcal{I}_{\rm lvK}$. 

\item With the help of elementary computations the effective quadratic forms 
$\overline{Q}^{\star}_2, \overline{Q}_2$ can be rewritten in terms of the 
{\em moments} in $t$ of the individual $Q_3(t, \cdot)$. This is made explicit 
in {\cite{DeBenitoSchmidt:19b}}. 
\end{enumerate}
\end{remark}

The functional $\mathcal{I}_{\rm lKi}$ is said to model a
{\em linearised Kirchhoff} regime because the isometry condition
$\nabla^{\top} y \nabla y = I$ of the Kirchhoff model is replaced by $\det
\nabla^2 v = 0$, a necessary and sufficient condition for the existence of an
in-plane displacement $u$ such that $\nabla u + \nabla^{\top} u + \nabla v
\otimes \nabla v = 0$. This condition is to leading order equivalent to
$\nabla^{\top} y \nabla y = I$ for deformations $y = (h^{2 \alpha - 4} u,
h^{\alpha - 2} v)$.\footnote{In the numerical analysis literature, the
denomination {\em linear Kirchhoff} is sometimes used for a pure bending
regime without constraints.} The functional $\mathcal{I}^{\theta}_{\rm vK}$ 
is of {\em von Kármán} type with in-plane and out-of-plane strains
interacting in a membrane energy term, and a bending energy term. For simple
choices of $Q_2$ and $B^h$, one recovers the classical functional
{\eqref{eq:simple-interpolating-functional}}. Finally, we say that the third
limit $\mathcal{I}_{\rm lvK}$, models a {\em linearised von
Kármán} (or {\em fully linear}) regime by analogy with the classical equivalent, but it is of a
different kind than the one expected from the hierarchy derived in
{\cite{friesecke_hierarchy_2006}}, since it again features an interplay
between in-plane and out-of-plane components.\footnote{This is in contrast to
{\cite{friesecke_hierarchy_2006}}. In our setting with the additional
dependence on the $x_3$ coordinate, it is not possible to simply drop terms
while bounding below the energy in the proof of the lower bound as is done in
{\cite[p. 211]{friesecke_hierarchy_2006}} because of the difficulty in
building recovery sequences later. For $\alpha \in (2, 3)$ we introduce an
additional relaxation and make use of representation Theorem
\ref{thm:representation-matrix} to construct them, but for $\alpha > 3$, no
such result is available. One could think that minimising globally,
$\inf_{\grs u} \int Q_2 \left( t, \grs u + \ldots \right)$, might be a way of
discarding in-plane displacements to recover the standard theory, but this
yields a functional which is not local and therefore lacks an integral
representation (see e.g. {\cite[Chapter 9]{braides_homogenization_1998}}).
Note that even if we pick $Q_2$ independent of $t$ and $B = 0$, we do not
recover the functional of {\cite{friesecke_hierarchy_2006}} because ours keeps
track of both in-plane and out-of-plane displacements which is essential to
capture the effect of pre-stressing with the internal misfit $B^h$.}

Our second goal is to show that the limit energy $\mathcal{I}^{\theta}_{\rm vK}$ 
interpolates between $\mathcal{I}_{\rm lKi}$ and
$\mathcal{I}_{\rm lvK}$ as the parameter $\theta$ moves from
$\infty$ to $0$, so that one can say that the theory of von Kármán type
bridges the other two. More precisely, in Section
\ref{sec:gamma-convergence-interpolation} we prove:

\begin{theorem}[Interpolating regime]
  \label{thm:gamma-interpolating}The following two
  $\Gamma$-limits hold:
  \[ \mathcal{I}_{\rm vK}^{\theta}  \overset{\Gamma}{\underset{\theta
     \uparrow \infty}{\longrightarrow}} \mathcal{I}_{\rm lKi},
  \]
  if $\omega$ is convex (Theorems \ref{thm:lower-bound-vk-to-lki} and
  \ref{thm:upper-bound-vk-to-lki}) and:
  \[ \mathcal{I}_{\rm vK}^{\theta}  \overset{\Gamma}{\underset{\theta
     \downarrow 0}{\longrightarrow}} \mathcal{I}_{\rm lvK} \]
  (Theorems \ref{thm:lower-bound-vk-to-lvk} and
  \ref{thm:upper-bound-vk-to-lvk}). Furthermore, sequences $(u_{\theta},
  v_{\theta})_{\theta > 0}$ of bounded energy $\mathcal{I}^{\theta}_{\rm vK}$ 
  are precompact in suitable spaces as $\theta \uparrow \infty$
  or $\theta \downarrow 0$ (Theorem \ref{thm:compactness-interpolating}).
\end{theorem}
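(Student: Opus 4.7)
The plan is to establish the three parts---compactness, lower bound, and upper bound---separately in each of the two regimes $\theta \uparrow \infty$ and $\theta \downarrow 0$, exploiting that the explicit quadratic structure of $\overline{Q}_2$ and $\overline{Q}_2^\star$ makes most of the arguments essentially soft.

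Setting $A_\theta \assign \grs u_\theta + \tfrac{1}{2}\nabla v_\theta \otimes \nabla v_\theta$, the coercivity \eqref{eq:Q2-L-bounds} combined with the quadratic expansion of $\overline{Q}_2(\theta^{1/2}A_\theta, -\nabla^2 v_\theta)$ in $(A_\theta, \nabla^2 v_\theta)$ yields, for sequences of bounded energy, a uniform estimate $\|\nabla^2 v_\theta\|_{L^2} + \theta^{1/2}\|A_\theta\|_{L^2} \leq C$ after absorbing the lower-order $\check{B}$-contributions. For $\theta \uparrow \infty$ this gives $v_\theta \rightharpoonup v$ in $W^{2,2}$; Rellich yields $\nabla v_\theta \otimes \nabla v_\theta \to \nabla v \otimes \nabla v$ in $L^2$, so that $\grs u_\theta \to -\tfrac{1}{2} \nabla v \otimes \nabla v$ strongly in $L^2$. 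Normalising the $u_\theta$ by subtracting rigid motions and applying Korn's inequality produces $u_\theta \rightharpoonup u$ in $W^{1,2}$ with $\grs u + \tfrac{1}{2}\nabla v \otimes \nabla v = 0$; this constraint forces $\det \nabla^2 v = 0$, so $v \in W^{2, 2}_{s h}(\omega)$. For $\theta \downarrow 0$ the natural rescaling is $\tilde u_\theta \assign \theta^{1/2} u_\theta$, for which $\theta^{1/2} A_\theta = \grs \tilde u_\theta + \tfrac{1}{2} \theta^{1/2} \nabla v_\theta \otimes \nabla v_\theta$ is bounded in $L^2$; since $\theta^{1/2}\nabla v_\theta \otimes \nabla v_\theta \to 0$ in $L^2$ by Rellich, Korn again yields $\tilde u_\theta \rightharpoonup u$ in $W^{1,2}$ along with $v_\theta \rightharpoonup v$ in $W^{2,2}$.

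Both lower bounds are classical weak lower semicontinuity statements. For $\theta \uparrow \infty$ the pointwise inequality $\overline{Q}_2(\theta^{1/2} A_\theta, -\nabla^2 v_\theta) \geq \overline{Q}_2^\star(-\nabla^2 v_\theta)$ combined with convexity and non-negativity of $\overline{Q}_2^\star$ yields $\liminf \mathcal{I}^\theta_{\rm vK}(u_\theta, v_\theta) \geq \mathcal{I}_{\rm lKi}(v)$. For $\theta \downarrow 0$ we rewrite the energy as $\tfrac{1}{2}\int_\omega \overline{Q}_2\bigl(\grs \tilde u_\theta + \tfrac{1}{2}\theta^{1/2}\nabla v_\theta \otimes \nabla v_\theta, -\nabla^2 v_\theta\bigr)$; the first argument converges weakly in $L^2$ to $\grs u$ (the perturbation vanishing strongly) and the second to $-\nabla^2 v$, so convexity of $\overline{Q}_2$ yields $\liminf \geq \mathcal{I}_{\rm lvK}(u,v)$. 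The recovery sequence for $\theta \downarrow 0$ is immediate: setting $u_\theta \assign \theta^{-1/2} u$ and $v_\theta \assign v$ renders $\tilde u_\theta \equiv u$, the argument of $\overline{Q}_2$ converges strongly in $L^2$, and continuity of the form delivers $\mathcal{I}^\theta_{\rm vK}(u_\theta, v_\theta) \to \mathcal{I}_{\rm lvK}(u,v)$.

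The main obstacle is the upper bound as $\theta \uparrow \infty$. Given $v \in W^{2, 2}_{s h}(\omega)$, we need $(u_\theta, v_\theta)$ such that $\theta^{1/2} A_\theta$ approaches in $L^2$ the pointwise minimiser $E^\star(-\nabla^2 v)$ of $E \mapsto \overline{Q}_2(E, -\nabla^2 v)$; the difficulty is that $E^\star(-\nabla^2 v)$ is in general \emph{not} a symmetrised gradient. Theorem \ref{thm:representation-matrix} is the key: it permits writing $E^\star(-\nabla^2 v) = \grs \bar u + \tfrac{1}{2}\nabla \bar v \otimes \nabla \bar v$ with $\bar v \in W^{2, 2}_{s h}(\omega)$. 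The ansatz $v_\theta \assign v + \theta^{-1/4}\bar v$ together with $u_\theta \assign u + \theta^{-1/2}\bar u + (\text{correction})$ reproduces the desired leading-order contributions to $\theta^{1/2} A_\theta$ but also generates cross terms of order $\theta^{1/4} \tfrac{1}{2}(\nabla v \otimes \nabla \bar v + \nabla \bar v \otimes \nabla v)$, which must be neutralised. A second application of the representation theorem to these cross terms, combined with a density argument regularising $v$ and $\bar v$ to smoother profiles along which the compensation can be performed explicitly, and a subsequent diagonal procedure, delivers the required sequence. This intertwining of multiple scales with repeated use of Theorem \ref{thm:representation-matrix} is the technical heart of the proof.
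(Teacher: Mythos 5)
Most of your proposal lines up with the paper's proofs. The compactness argument (bounds $\|\nabla^2 v_\theta\|_{0,2}\leq C$ and $\theta^{1/2}\|A_\theta\|_{0,2}\leq C$ from coercivity of $\overline{Q}_2$, Sobolev $W^{1,2}\hookrightarrow L^4$ for the nonlinear term, Korn--Poincar\'e in the normalised space $X_u$, and passage to $\grs u + \tfrac12 \nabla v\otimes\nabla v=0$ forcing $\det\nabla^2 v=0$) is essentially the paper's Theorem \ref{thm:compactness-interpolating}. Both lower bounds are identical to Theorems \ref{thm:lower-bound-vk-to-lki} and \ref{thm:lower-bound-vk-to-lvk}: for $\theta\uparrow\infty$ use the pointwise bound $\overline Q_2(\cdot,\cdot)\geq\overline Q_2^\star$ plus convexity; for $\theta\downarrow 0$ rewrite in terms of $\theta^{1/2}u_\theta$ and use weak lower semicontinuity. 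The trivial recovery sequence $u_\theta=\theta^{-1/2}u$, $v_\theta=v$ is exactly the paper's Theorem \ref{thm:upper-bound-vk-to-lvk}.

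However, your upper bound as $\theta\uparrow\infty$ contains a genuine gap, and it starts with a misstatement of what Theorem \ref{thm:representation-matrix} gives you. That theorem does \emph{not} represent $A_{\min}=E^\star(-\nabla^2 v)$ as a nonlinear von K\'arm\'an strain $\grs\bar u + \tfrac12\nabla\bar v\otimes\nabla\bar v$ with $\bar v$ having singular Hessian. What it actually delivers (for smooth $v\in\mathcal V_0$ and $A$ vanishing near $\{\nabla^2 v=0\}$) is a decomposition of the form
\[
A \;=\; \grs g \;+\; \alpha\,\nabla^2 v
\]
with smooth scalar $\alpha$ and vector field $g$. This structure is not incidental: it is matched precisely to the correct ansatz. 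Your proposed scaling $v_\theta=v+\theta^{-1/4}\bar v$ produces cross terms of order $\theta^{1/4}(\nabla v\otimes\nabla\bar v)_s$ in $\theta^{1/2}A_\theta$, and no choice of correction in $u_\theta$ can neutralise them. Using the identity $(\nabla v\otimes\nabla\bar v)_s = \grs(\bar v\nabla v) - \bar v\,\nabla^2 v$, a correction by $-\theta^{-1/4}\bar v\nabla v$ merely trades the problematic cross term for $-\theta^{1/4}\bar v\,\nabla^2 v$, which is equally divergent; and iterating the representation theorem does not help because it would need to be applied to a $\theta$-\emph{dependent} matrix field whose amplitude grows. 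The paper's ansatz avoids this by scaling \emph{both} perturbations by $\theta^{-1/2}$ with a paired choice:
\[
v_\theta := v - \theta^{-1/2}\alpha,
\qquad
u_\theta := u + \theta^{-1/2}\bigl(\alpha\nabla v + g\bigr),
\]
where $\grs u + \tfrac12\nabla v\otimes\nabla v=0$. A direct computation gives
$\theta^{1/2}A_\theta = \grs g + \alpha\nabla^2 v + \theta^{-1/2}\tfrac12\nabla\alpha\otimes\nabla\alpha$, because the cross terms $(\nabla\alpha\otimes\nabla v)_s$ from $\grs u_\theta$ and from $\nabla v_\theta\otimes\nabla v_\theta$ cancel \emph{exactly}. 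Combined with Corollary \ref{cor:representation-matrix-minimizer} (approximating $A_{\min}$ by $\grs g_k + \alpha_k\nabla^2 v$ in $L^2$) and a diagonal argument over the two small parameters $\theta^{-1/2}$ and $\delta$, this yields the recovery sequence; your version would need the same pairing mechanism and $\theta^{-1/2}$ scaling to close.
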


\noindent{\it Example.} 
The easiest non-trivial situation is given by a linear internal misfit in a homogeneous material with 
%\begin{equation}
%  \label{eq:linear-internal-stresses} B (t) \assign tI_3 \in \mathbb{R}^{3
%  \times 3},
%\end{equation}
\[ B (t) \assign tI_3 \in \mathbb{R}^{3  \times 3} 
   \quad \mbox{and} \quad 
   Q_2 (t, \cdot) = Q_2 (\cdot). \] 
Then
\begin{align*}
  \mathcal{I}_{\rm vK}^{\theta} (u, v) 
  &= \frac{\theta}{2}  \int_{\omega} Q_2 (\grs u + \tfrac{1}{2} \nabla v \otimes
  \nabla v) + \frac{1}{24}  \int_{\omega} Q_2 (\nabla^2 v - I). 
\end{align*}
for $(u, v) \in W^{1, 2} (\omega ; \mathbb{R}^2) 
\times W^{2, 2} (\omega ; \mathbb{R})$. We refer to \cite{DeBenito-Thesis} for more 
worked out examples.

%-----------------------------------------------------------------------------------------
%-----------------------------------------------------------------------------------------
%-----------------------------------------------------------------------------------------
\section{Compactness and identification of limit
strain}\label{sec:compactness-hierarchy}
%-----------------------------------------------------------------------------------------

We collect here some basic results proving compactness of sequences of scaled
energy and providing explicit representations for the limit strains, as
required for the proofs of $\Gamma$-convergence in Section 
\ref{sec:main-results}. These results are direct consequences of the homogeneous 
case treated in {\cite[Lemma 1]{friesecke_hierarchy_2006}}. We recall the 
definition of the scaled elastic energies
{\eqref{eq:scaled-elastic-energy}}:
\[ \mathcal{I}_{\alpha}^h (y) = \frac{1}{h^{2 \alpha - 2}}  \int_{\Omega_1}
   W_0 (x_3, \nabla_h y (x)  (I + h^{\alpha - 1} B^h (x_3))) \mathd x. \]
\begin{lemma}
  \label{lem:approx-rotations}Let $\alpha \in (2, \infty)$ and let $(y^h)_{h >
  0} \subset Y$ have finite scaled $\mathcal{I}^h_{\alpha}$ energy. For every
  $h > 0$ there exist constants $\overline{R}^h \in \tmop{SO} (3)$ and $c^h
  \in \mathbb{R}^3$ such for the corrected deformations
  \begin{equation}
    \label{eq:corrected-deformations} \tilde{y}^h = \rho (y^h) \assign
    (\overline{R}^h)^{\top} y^h - c^h .
  \end{equation}
  there exist rotations $R^h : \omega \rightarrow \tmop{SO} (3)$ (extended
  constantly along $x_3$ to all of $\Omega_1$ outside $\{ 0 \} \times \omega$)
  approximating $\nabla_h  \tilde{y}^h$ in $L^2 (\Omega_1)$. Quantitatively:
  \[ \| \nabla_h  \tilde{y}^h - R^h \|_{0, 2, \Omega_1} \leqslant Ch^{\alpha -
     1} . \]
  Furthermore,
  \[ \| R^h - I \|_{0, 2, \Omega_1} \leqslant Ch^{\alpha - 2} . \]
  Finally there exists a subsequence (not relabelled) such that for the scaled
  and averaged in-plane and out-of-plane displacements from
  {\eqref{def:scaled-in-out-of-plane-displacements}} there exist $(u, v) \in
  W^{1, 2} (\omega ; \mathbb{R}^2) \times W^{2, 2} (\omega)$ such that, if
  $\alpha \neq 3$:
  \[ u_{\alpha}^h \rightharpoonup u \text{ in } W^{1, 2} (\omega ;
     \mathbb{R}^2) \text{\quad and\quad} v_{\alpha}^h \rightarrow v \text{ in
     } W^{1, 2} (\omega), \]
  If $\alpha = 3$ an analogous result holds with $u_{\theta}^h$ and
  $v_{\theta}^h$ from {\eqref{def:scaled-in-out-of-plane-displacements-theta}}.
\end{lemma}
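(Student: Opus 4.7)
My plan is to reduce to the homogeneous setting of \cite{friesecke_hierarchy_2006} and invoke the quantitative geometric rigidity result of \cite{friesecke_theorem_2002}, using the pre-strain assumption only to absorb the $B^h$ perturbation.

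First, I would observe that Assumption \ref{main-assumptions}.\ref{property:energy-well-growth-below} gives $W_0(x_3, F) \geq C\,\tmop{dist}^2(F, \tmop{SO}(3))$, so the finite-energy hypothesis combined with the $L^\infty$-bound on $B^h$ yields
\[
\int_{\Omega_1} \tmop{dist}^2(\nabla_h y^h (I + h^{\alpha-1} B^h), \tmop{SO}(3)) \mathd x \leq C h^{2\alpha - 2}.
\]
Since $|h^{\alpha-1} B^h| \lesssim h^{\alpha-1}$ and $\nabla_h y^h$ is essentially bounded in $L^2$ near $\tmop{SO}(3)$, the triangle inequality transfers this bound (losing at most an additive $h^{\alpha-1}$ term) to
\[
\int_{\Omega_1} \tmop{dist}^2(\nabla_h y^h, \tmop{SO}(3)) \mathd x \leq C h^{2\alpha - 2},
\]
which is exactly the hypothesis of the FJM compactness lemma \cite[Lemma 1]{friesecke_hierarchy_2006}.

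Next, I would apply that lemma verbatim to obtain piecewise-constant rotation fields $R^h : \omega \to \tmop{SO}(3)$ (built on a grid of squares of side $h$ inside $\omega$, then extended $x_3$-independently), satisfying $\|\nabla_h y^h - R^h\|_{0,2,\Omega_1} \leq C h^{\alpha-1}$ together with the quantitative estimates $\|R^h - \overline R^h\|_{0,2} \leq C h^{\alpha-2}$ and $\|\nabla' R^h\|_{0,2} \leq C h^{\alpha-2}$ for a single constant rotation $\overline R^h \in \tmop{SO}(3)$ obtained by averaging and projecting back onto $\tmop{SO}(3)$. Setting $c^h$ equal to the mean of $(\overline R^h)^\top y^h$ on $\Omega_1$ to kill the translation, and working with $\tilde y^h = \rho(y^h)$ instead of $y^h$, both stated estimates follow: the first is $\|\nabla_h \tilde y^h - (\overline R^h)^\top R^h\|_{0,2}$, and after renaming $(\overline R^h)^\top R^h \rightsquigarrow R^h$ the second is $\|R^h - I\|_{0,2} \leq \|R^h - \overline R^h\|_{0,2}$ (using that $\overline R^h \to I$ may be arranged after a further constant rotation using frame invariance, or simply incorporated into $\overline R^h$ itself).

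For the second part, I would derive the convergence of $u_\alpha^h$ and $v_\alpha^h$ exactly as in \cite{friesecke_hierarchy_2006} and \cite{schmidt_plate_2007}. Writing $R^h = I + h^{\alpha-2} A^h + O(h^{2\alpha-4})$ with $A^h \in L^2(\omega; \mathbb{R}^{3\times3}_{\tmop{ant}})$ bounded by the second estimate, the skew-symmetric field $A^h$ has its gradient bounded in $L^2$ (from the quantitative rigidity) and thus converges weakly, up to a subsequence, in $W^{1,2}$. Writing the scaled displacements via
\[
\partial_\beta u_\alpha^h = \frac{1}{h^\gamma}\int_{-1/2}^{1/2}(\partial_\beta \tilde y^h - e_\beta) \mathd x_3, \quad \partial_\beta v_\alpha^h = \frac{1}{h^{\alpha-2}}\int_{-1/2}^{1/2} \partial_\beta \tilde y^h_3 \mathd x_3,
\]
and using $\partial_\beta \tilde y^h \approx R^h e_\beta = e_\beta + h^{\alpha-2} A^h e_\beta + O(h^{2\alpha-4})$, the $W^{1,2}$-boundedness of $A^h$ yields the claimed $W^{1,2}$-boundedness of $u_\alpha^h$ and $v_\alpha^h$. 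After Poincaré-type adjustment of $c^h$ (adding an affine correction), weak $W^{1,2}$ limits $u$ and $v$ exist on a subsequence, and the antisymmetry of $A^h$ combined with the $W^{1,2}$-bound on its third column forces $v \in W^{2,2}(\omega)$ and yields the strong $W^{1,2}$-convergence of $v_\alpha^h$ via Rellich. The case $\alpha = 3$ is identical upon relabelling by the factors of $\theta^{1/2}$.

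The main obstacle is the quantitative rigidity step producing $R^h$ with the sharp scalings $h^{\alpha-1}$ and $h^{\alpha-2}$; this is precisely the content of \cite[Lemma 1]{friesecke_hierarchy_2006} and requires the scale-matching between square-diameter $h$ and thickness $h$, so I would simply cite it after reducing to the homogeneous setting. The passage $v \in W^{2,2}$ requires identifying $\partial_\beta v$ with (up to subsequence) the weak limit of $(A^h)_{3\beta}$, whose $L^2$-gradient bound is the essential input.
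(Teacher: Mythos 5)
Your proposal takes essentially the same route as the paper: reduce to the homogeneous FJM compactness result \cite[Lemma 1]{friesecke_hierarchy_2006} by showing that, under Assumption \ref{main-assumptions}.\ref{property:energy-well-growth-below}, finite scaled $\mathcal{I}^h_\alpha$-energy implies $\int_{\Omega_1}\operatorname{dist}^2(\nabla_h y^h,\operatorname{SO}(3))\lesssim h^{2\alpha-2}$, and then cite the FJM estimates for the rotation approximation and the weak convergence of the scaled displacements. The only difference is cosmetic: where the paper works pointwise, using $|F|\lesssim 1+\operatorname{dist}(F,\operatorname{SO}(3))$ to bound $d^2(F(I+h^{\alpha-1}B^h))\ge \tfrac14 d^2(F)-Ch^{2\alpha-2}$ and absorb, you first deduce the $L^2$-bound on $\nabla_h y^h$ from the distance bound (the phrase ``essentially bounded in $L^2$ near $\operatorname{SO}(3)$'' presumably means this) and then apply the triangle inequality in $L^2$; both close the same absorption loop and are equivalent.
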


In particular, in the sense of Definition \ref{def:ph-maps} we have that 
$(y^h)_{h > 0}$ $P^h$-converges to $v \in X_{\alpha}$ (if $\alpha \in (2,3)$), 
respectively $(u, v) \in X_{\alpha}$ (if $\alpha \ge 3$). 
\smallskip 

\begin{proof}
  This is exactly a particular case of {\cite[Lemma
  1]{friesecke_hierarchy_2006}}, estimates (84) and (85) and estimates (86)
  and (87), once we prove that if $(y^h)_{h > 0}$ have finite scaled
  $\mathcal{I}^h_{\alpha}$ energy, then they have finite scaled energy in the
  sense of {\cite{friesecke_hierarchy_2006}}.
  
  Note first that among all choices we can make for the energy density $W$
  which fulfil the assumptions in {\cite{friesecke_hierarchy_2006}}, we can
  pick $\tmop{dist}^2 (\cdot, \tmop{SO} (3))$. Therefore we will bound this
  quantity. Write $d (F) \assign \tmop{dist} (F, \tmop{SO} (3))$. We begin by
  using Assumption
  \ref{main-assumptions}.\ref{property:energy-well-growth-below}:
  \begin{eqnarray*}
    Ch^{2 \alpha - 2} & \geqslant & \int_{\Omega_1} W_0 (x_3, \nabla_h y (x) 
    (I + h^{\alpha - 1} B^h (x_3)))\\
    & \gtrsim & \int_{\Omega_1} d^2 (\nabla_h y (x)  (I + h^{\alpha - 1} B^h
    (x_3))) .
  \end{eqnarray*}
  Consider now the following:
  \begin{eqnarray*}
    d^2 (F (I + h^{\alpha - 1} B^h)) & \geqslant & \tfrac{1}{2} d^2 (F) - |
    Fh^{\alpha - 1} B^h |^2\\
    & \geqslant & \tfrac{1}{2} d^2 (F) - Ch^{2 \alpha - 2}  | 1 + d^2 (F) |\\
    & \geqslant & \tfrac{1}{4} d^2 (F) - Ch^{2 \alpha - 2} .
  \end{eqnarray*}
  But then we are done since:
  \[ h^{2 \alpha - 2} \gtrsim \int_{\Omega_1} \tfrac{1}{4} d^2 (\nabla_h y) .
  \]
\end{proof}

\begin{lemma}
  \label{lem:identification-limiting-strain}\footnote{This is almost 
  {\em word for word} {\cite[Lemma 2]{friesecke_hierarchy_2006}} with
  the very minor addition of the factors $\theta, \sqrt{\theta}$. For other
  scaling choices see {\cite[p.208]{friesecke_hierarchy_2006}}. Note that
  this is inspired by {\cite[Theorem 5.4.2]{ciarlet_mathematical_1997}}
  (itself based in {\cite[Theorem 1.4.1.c]{ciarlet_mathematical_1997}}).}Let
  $\alpha \in (2, \infty)$ and let $(y^h)_{h > 0}$ be a sequence in $Y$ which
  $P^h$-converges to $(u, v) \in X_{\alpha}$ in the sense of Theorem
  \ref{thm:gamma-hierarchy} and $R^h : \omega \rightarrow \tmop{SO} (3)$
  (extended constantly along $x_3$ to all of $\Omega_1$ outside $\{ 0 \}
  \times \omega$) such that
  
  \begin{equation*}
%      \label{eq:ident-strain-cond-Rh} 
      \| \nabla_h y^h - R^h \|_{0, 2, \Omega_1} \leqslant Ch^{\alpha - 1} .
  \end{equation*}
  
  Then:
  \begin{equation*}
%    \label{eq:ident-strain-conv-Ah} 
      A^h \assign \frac{1}{h^{\alpha - 2}}  (R^h
      - I) \longrightarrow \left\{\begin{array}{lll}
      \sqrt{\theta} A & \text{if} & \alpha = 3,\\
      A & \text{else}, & 
    \end{array}\right. \text{ in } L^2 (\omega ; \mathbb{R}^{3 \times 3}),
  \end{equation*}
  where
  \[ A = e_3 \otimes \hat{\nabla} v - \hat{\nabla} v \otimes e_3, \]
  and
  \begin{equation*}
    G^h \assign \frac{(R^h)^{\top} \nabla_h y^h - I}{h^{\alpha - 1}}
    \rightharpoonup G \text{ in } L^2 (\Omega_1 ; \mathbb{R}^{3 \times 3}),
  \end{equation*}
  where the submatrix $\check{G} \in \mathbb{R}^{2 \times 2}$ is affine in
  $x_3$:
  \begin{equation*}
%    \label{eq:representation for G} 
    \check{G} (x', x_3) = G_0 (x') + x_3 G_1(x')
  \end{equation*}
  and
  \begin{equation}
    \label{def:G1} G_1 = \left\{\begin{array}{rll}
      - \sqrt{\theta} \nabla^2 v & \text{if} & \alpha = 3,\\
      - \nabla^2 v & \text{else,} & 
    \end{array}\right.
  \end{equation}
  \begin{equation}
    \label{def:G0} \tmop{sym} G_0 = \left\{\begin{array}{rll}
      \theta \left( \grs u + \tfrac{1}{2} \nabla v \otimes \nabla v \right) &
      \text{if} & \alpha = 3,\\
      \grs u & \text{if} & \alpha > 3,
    \end{array}\right.
  \end{equation}
  and
  \[ \grs u + \tfrac{1}{2} \nabla v \otimes \nabla v = 0 \text{, if } \alpha
     \in (2, 3) . \]
\end{lemma}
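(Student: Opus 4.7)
The plan is to follow \cite[Lemma 2]{friesecke_hierarchy_2006} almost verbatim, tracking the additional $\sqrt\theta$ factors that arise from the renormalisation \eqref{def:scaled-in-out-of-plane-displacements-theta} at the critical exponent $\alpha = 3$. Boundedness of $A^h$ in $L^2(\omega)$ is immediate from $\|R^h - I\|_{0,2,\Omega_1} \leqslant C h^{\alpha-2}$, and boundedness of $G^h$ in $L^2(\Omega_1)$ follows from $\|\nabla_h y^h - R^h\|_{0,2,\Omega_1} \leqslant C h^{\alpha-1}$ together with $R^h \in \tmop{SO}(3)$, both supplied by Lemma \ref{lem:approx-rotations}. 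Extract subsequences so that $A^h \rightharpoonup A$ in $L^2(\omega)$ and $G^h \rightharpoonup G$ in $L^2(\Omega_1)$. Expanding $(R^h)^\top R^h = I$ and dividing by $h^{\alpha-2}$ gives $A^h + (A^h)^\top = -h^{\alpha-2}(A^h)^\top A^h$, whose right-hand side tends to zero in $L^1(\omega)$; hence $A + A^\top = 0$, and strong convergence $A^h \to A$ in $L^2(\omega)$ can moreover be arranged as in \cite{friesecke_hierarchy_2006}.

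To identify $A$, observe that the third row of $R^h$ approximates $\partial_\beta y^h_3$ for $\beta \in \{1,2\}$: averaging in $x_3$ yields $(R^h)_{3\beta}(x') = \int_{-1/2}^{1/2} \partial_\beta y^h_3(x',x_3) \mathd x_3 + O(h^{\alpha-1})$ in $L^2(\omega)$. Dividing by $h^{\alpha-2}$ and using the $W^{1,2}$-convergence of $v_\alpha^h$ from Lemma \ref{lem:approx-rotations} gives $(A^h)_{3\beta} \to \partial_\beta v$ in $L^2(\omega)$; antisymmetry then pins down the remaining entries and produces $A = e_3 \otimes \hat\nabla v - \hat\nabla v \otimes e_3$. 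At $\alpha = 3$ the prefactor $(\sqrt\theta h)^{-1}$ in the definition of $v_\theta^h$ introduces the extra $\sqrt\theta$.

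The affine structure $\check G(x',x_3) = G_0(x') + x_3 G_1(x')$ and the formula for $G_1$ come from commuting partial derivatives in $\nabla_h y^h = R^h(I + h^{\alpha-1}G^h)$. Since $R^h$ is $x_3$-independent, the identity $\partial_3 \partial_\beta y^h = \partial_\beta \partial_3 y^h$ reduces, after dividing through by $h^{\alpha-1}$ and using $R^h - I = h^{\alpha-2} A^h$, to $R^h \partial_3 G^h e_\beta = \partial_\beta A^h e_3 + O(h)$; passing to the limit and invoking the computation $A e_3 = -\hat\nabla v$ gives $\partial_3 \check G = -\nabla^2 v$ (or $-\sqrt\theta\, \nabla^2 v$ at $\alpha = 3$), so $\check G$ is affine in $x_3$ with the claimed slope. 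To identify $\tmop{sym} G_0$, expand
\begin{equation*}
R^h = I + h^{\alpha-2} A^h + \tfrac{1}{2} h^{2(\alpha-2)} (A^h)^2 + \text{l.o.t.},
\end{equation*}
substitute into $G^h = ((R^h)^\top \nabla_h y^h - I)/h^{\alpha-1}$, and take the symmetric part averaged in $x_3$. The linear piece reconstructs $\grs u$ via the convergence of $u_\alpha^h$ (with the appropriate $\theta$-factor when $\alpha = 3$); the quadratic piece $\tfrac{1}{2}(A^h)^2$ enters $G^h$ at order $h^{2(\alpha-2)-(\alpha-1)} = h^{\alpha-3}$: negligible for $\alpha > 3$, of unit size at $\alpha = 3$ (whence the extra contribution $\tfrac{1}{2}\nabla v \otimes \nabla v$), and dominant for $\alpha \in (2,3)$, forcing the cancellation $\grs u + \tfrac{1}{2}\nabla v \otimes \nabla v = 0$.

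The principal technical obstacle is justifying the second-order Taylor expansion of $R^h$ at the level of $L^2$-convergence, which is what produces the quadratic term $\tfrac{1}{2}\nabla v \otimes \nabla v$ at $\alpha = 3$ and enforces the rigid constraint for $\alpha \in (2,3)$. This requires an improved $L^p$-bound on $A^h$ for some $p > 2$, so that $(A^h)^2$ is uniformly bounded in $L^{p/2} \subset L^1$ and the remainder in the expansion is genuinely of lower order; this in turn rests on the higher-integrability refinement of the geometric rigidity estimate of \cite{friesecke_theorem_2002}. Once this is secured, matching coefficients order by order reduces the remaining work to a direct specialisation of \cite[Lemma 2]{friesecke_hierarchy_2006}.
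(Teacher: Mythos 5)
The paper does not prove this lemma itself; it cites \cite[Lemma 2, pp.~208--209]{friesecke_hierarchy_2006}, and your reconstruction follows exactly that route, with the correct bookkeeping of the extra $\theta,\sqrt\theta$ factors coming from \eqref{def:scaled-in-out-of-plane-displacements-theta}. The order balance $h^{2(\alpha-2)-(\alpha-1)}=h^{\alpha-3}$ that governs which quadratic contribution survives (negligible for $\alpha>3$, order one at $\alpha=3$, dominant for $\alpha\in(2,3)$) is the right dichotomy, and you correctly flag that strong $L^2$-convergence of $A^h$ --- needed to pass to the limit in $(A^h)^\top A^h$ --- does not follow from $L^2$-boundedness alone and requires additional integrability or a truncation argument, which is indeed where the refined rigidity information from \cite{friesecke_hierarchy_2006} enters. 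So substantively your proposal is the cited proof.

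Two points to tidy up. First, the display
$R^h = I + h^{\alpha-2}A^h + \tfrac12 h^{2(\alpha-2)}(A^h)^2 + \text{l.o.t.}$
is not an expansion: $A^h$ is \emph{defined} by $R^h = I + h^{\alpha-2}A^h$, with no remainder. What you actually use (and use correctly a line later) is the identity
$\tmop{sym}A^h = -\tfrac12 h^{\alpha-2}(A^h)^\top A^h$
obtained from $(R^h)^\top R^h = I$; this is what forces the symmetric part of $A^h$ to be one power of $h^{\alpha-2}$ smaller and produces the $\tfrac12\nabla v\otimes\nabla v$ term. It would be cleaner to state it that way rather than as a Taylor expansion of $R^h$. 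Second, ``antisymmetry then pins down the remaining entries'' overclaims: skew-symmetry together with $A_{3\beta}=\partial_\beta v$ determines $A_{\beta 3}$ but leaves $A_{12}=-A_{21}$ free. One needs a separate argument (or the freedom in choosing the constant rotations $\overline{R}^h$ in \eqref{eq:corrected-deformations}, respectively an absorption into $u$) to conclude $A_{12}=0$. Both are minor, but worth noting since the lemma statement prescribes $A=e_3\otimes\hat\nabla v-\hat\nabla v\otimes e_3$ with vanishing in-plane block.
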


\begin{proof}
  See {\cite[p.\ 208--209]{friesecke_hierarchy_2006}}.
\end{proof}

%-----------------------------------------------------------------------------------------
%-----------------------------------------------------------------------------------------
%-----------------------------------------------------------------------------------------
\section{$\mathbf{\Gamma}$-convergence of the
hierarchy}\label{sec:gamma-convergence-hierarchy}
%-----------------------------------------------------------------------------------------

This section proves the lower (Theorem \ref{thm:lower-bound}) and upper
bounds (Theorems \ref{thm:upper-bound-lki}, \ref{thm:upper-bound-vk} and
\ref{thm:upper-bound-lvk}) required for deriving the hierarchy of models in
Theorem \ref{thm:gamma-hierarchy}.

An important result of {\cite{friesecke_hierarchy_2006}} is that for small $h
> 0$ deformations $y^h$ of finite scaled energy are, up to rigid motions,
roughly the trivial map $(x', x_3) \mapsto (x', hx_3)$. The factor by which
they fail to (almost) be the identity is essential for the linearisation step
in the proof below as well as for the identification of the limit strains of
weakly convergent sequences of scaled displacements. We must account for these
rigid motions if compactness is to be achieved, in particular because
deformations might ``wander to infinity'' without altering the elastic energy.
Lemmas \ref{lem:approx-rotations} and \ref{lem:identification-limiting-strain}
gather these ideas more precisely. In particular, the last statement of
Lemma \ref{lem:approx-rotations} provides the required compactness.

Recall that we are always using weak convergence in the spaces $X_{\alpha}$.

\begin{theorem}[Lower bounds]
  \label{thm:lower-bound}Let $\alpha \in (2, 3)$. If $(y^h)_{h > 0} \subset Y$
  is a sequence $P^h_{\alpha}$-converging to $v \in X_{\alpha}$, then
  \[ \underset{h \rightarrow 0}{\tmop{linf}} \mathcal{I}_{\alpha}^h (y^h)
     \geqslant \mathcal{I}_{\rm lKi} (v)^{} . \]
  Now let $\alpha = 3$. If $(y^h)_{h > 0} \subset Y$ is a sequence
  $P^h_{\theta}$-converging to $(u, v) \in X_{\alpha}$, then for all $\theta >
  0$
  \[ \underset{h \rightarrow 0}{\tmop{linf}}  \frac{1}{\theta}
     \mathcal{I}_{\alpha}^h (y^h) \geqslant \mathcal{I}^{\theta}_{\rm vK} 
  (u, v)^{} . \]
  Finally, let $\alpha > 3$. If $(y^h)_{h > 0} \subset Y$ is a sequence
  $P^h_{\alpha}$-converging to $(u, v) \in X_{\alpha}$, then
  \[ \underset{h \rightarrow 0}{\tmop{linf}} \mathcal{I}_{\alpha}^h (y^h)
     \geqslant \mathcal{I}_{\rm lvK} (u, v)^{} . \]
\end{theorem}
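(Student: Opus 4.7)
My strategy is to linearise the scaled energy around the rotations produced by Lemma~\ref{lem:approx-rotations}, invoke weak lower semicontinuity of a nonnegative quadratic functional, and finally relax pointwise in the $e_3$-direction. Starting from a $P^h$-converging sequence $(y^h)_{h>0}$ of finite scaled energy, Lemma~\ref{lem:approx-rotations} produces rotations $R^h \colon \omega \to \mathrm{SO}(3)$ with $\|\nabla_h y^h - R^h\|_{L^2(\Omega_1)} \le C h^{\alpha-1}$; after extracting a subsequence, Lemma~\ref{lem:identification-limiting-strain} gives
\[
G^h \assign \frac{(R^h)^{\top} \nabla_h y^h - I}{h^{\alpha-1}} \rightharpoonup G \text{ in } L^2(\Omega_1; \mathbb{R}^{3\times 3}),
\]
where $\check{G}(x', x_3) = G_0(x') + x_3 G_1(x')$ with $G_0, G_1$ identified or constrained by \eqref{def:G0}--\eqref{def:G1}.

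Using frame invariance, I would rewrite the integrand as $W_0(x_3, I + h^{\alpha-1} M^h)$ with $M^h \assign G^h + B^h + h^{\alpha-1} G^h B^h$. Since $\|G^h\|_{L^2} + \|B^h\|_{L^\infty}$ is bounded, the bad set $\mathcal{B}^h \assign \{h^{\alpha-1}|M^h| > \eta\}$ has measure $O(h^{2(\alpha-1)}/\eta^2)$ by Chebyshev, and on its complement Assumption~\ref{main-assumptions}.\ref{assumption:modulus} yields
\[
W_0(x_3, I + h^{\alpha-1} M^h) \ge \tfrac{1}{2} h^{2\alpha-2} Q_3(x_3, M^h) - \varepsilon(\eta)\, h^{2\alpha-2}|M^h|^2,
\]
with $\varepsilon(\eta) \to 0$ as $\eta \to 0$. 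Dropping the bad set (using $W_0 \ge 0$), absorbing the cross term $h^{\alpha-1} G^h B^h$ into an $o(1)$ error in $L^2$, and noting that $\chi^h \assign \mathbf{1}_{\Omega_1 \setminus \mathcal{B}^h}$ satisfies $\chi^h (G^h + B^h) \rightharpoonup G + B$ weakly in $L^2$, the convexity of $F \mapsto \int Q_3(x_3, F)\,\mathd x$ gives
\[
\liminf_{h \to 0} \mathcal{I}_\alpha^h(y^h) \ge \tfrac{1}{2} \int_{\Omega_1} Q_3(x_3, G + B)\, \mathd x - \varepsilon(\eta)\, C,
\]
so sending $\eta \to 0$ yields a clean lower bound in terms of $G + B$.

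To finish, I would use that $Q_3$ vanishes on antisymmetric matrices to relax pointwise in the last row and column, obtaining $Q_3(x_3, G + B) \ge Q_2(x_3, \check{G} + \check{B})$; inserting $\check{G} = G_0 + x_3 G_1$ and integrating in $x_3$ over $(-1/2, 1/2)$ produces $\overline{Q}_2(\mathrm{sym}\, G_0, G_1)$. Substituting \eqref{def:G0}--\eqref{def:G1}: for $\alpha > 3$ this is $\overline{Q}_2(\grs u, -\nabla^2 v)$, giving $\mathcal{I}_{\rm lvK}(u,v)$; for $\alpha = 3$, using $\tilde{B} = \sqrt{\theta} B$ and pulling out the joint $\theta$-scaling from both arguments yields $\theta\, \overline{Q}_2(\sqrt{\theta}(\grs u + \tfrac{1}{2} \nabla v \otimes \nabla v), -\nabla^2 v)$, which after dividing by $\theta$ exactly matches $\mathcal{I}_{\rm vK}^{\theta}(u,v)$; for $\alpha \in (2,3)$, since $\mathrm{sym}\, G_0$ is not pinned down by $v$, pointwise minimisation over $E \in \mathbb{R}^{2\times 2}_{\rm sym}$ yields $\overline{Q}_2^{\star}(-\nabla^2 v)$ and hence $\mathcal{I}_{\rm lKi}(v)$, while the side condition $v \in W^{2,2}_{sh}$ follows from the compatibility $\grs u + \tfrac{1}{2} \nabla v \otimes \nabla v = 0$ supplied by Lemma~\ref{lem:identification-limiting-strain}, which forces $\det \nabla^2 v = 0$.

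The main obstacle I anticipate is the linearisation step: choosing the truncation threshold $\eta$ so that the Taylor remainder $\omega(\cdot)$, the quadratic error $h^{\alpha-1} G^h B^h$, and the discarded bad set all remain negligible after division by $h^{2\alpha-2}$, without sacrificing the leading-order $Q_3$ contribution and without losing weak convergence of $\chi^h(G^h+B^h)$. Once this is controlled, the identification of the three limit functionals reduces to bookkeeping with the explicit form of $G$ provided by Lemma~\ref{lem:identification-limiting-strain}.
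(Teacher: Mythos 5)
Your proposal is correct and follows essentially the same route as the paper: rotation correction via Lemma~\ref{lem:approx-rotations}, frame-invariant rewriting $W_0(x_3, I + h^{\alpha-1}M^h)$ with $M^h = G^h + (R^h)^\top\nabla_h y^h \tilde B^h$, a truncation to justify the Taylor expansion, weak lower semicontinuity of the $Q_3$-integral, pointwise relaxation to $Q_2$, integration in $x_3$, and substitution of the limit strain from Lemma~\ref{lem:identification-limiting-strain}. The only (valid) variation is your choice of truncation — a fixed threshold $\eta$ on $h^{\alpha-1}|M^h|$ with $\eta\to 0$ after $h\to 0$, versus the paper's $h$-dependent threshold $|G^h|\leqslant h^{-1/2}$ — and you should be slightly more careful to carry $\tilde B^h = \sqrt\theta\, B^h$ rather than $B^h$ throughout the $\alpha=3$ computation rather than only reinstating it at the very end.
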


\begin{proof}
  If $\alpha = 3$, we define $\tilde{B}^h \assign \sqrt{\theta} B^h$ and
  $\tilde{B} = \sqrt{\theta} B$, otherwise $\tilde{B} \assign B$ and
  $\tilde{B}^h \assign B^h$. Following closely the techniques in
  {\cite{friesecke_theorem_2002,friesecke_hierarchy_2006,schmidt_minimal_2007,schmidt_plate_2007}}
  we use a Taylor expansion of the energy around the identity which allows us
  to cancel or identify its lower order terms. For this we must correct the
  deformations with an approximation by rotations and work in adequate sets
  where there is control over higher order terms.
  
  Upon passing to a subsequence (not relabelled) which realises 
  ${\tmop{linf}}_{h \rightarrow 0} \mathcal{I}_{\alpha}^h (y^h)$ as its limit,  
  we may w.l.o.g.\ assume that $(y^h)_{h > 0}$ has finite scaled $\mathcal{I}^h_{\alpha}$
  energy and pass to further subsequences in the following. 
  \smallskip 

  {\step{1: Approximation by rotations}{We will be working with the corrected
  deformations
  \[ \rho (y^h) \assign (\overline{R}^h)^{\top} y^h - c^h, \]
  as given in Lemma \ref{lem:approx-rotations}. For simplicity we use the same
  notation $y^h$ for these functions. Also by Lemma \ref{lem:approx-rotations}
  there exist rotations $R^h : \omega \rightarrow \tmop{SO} (3)$ (extended
  constantly along $x_3$ to all of $\Omega_1$ outside $\omega \times \{ 0 \}$)
  which approximate $\nabla_h y^h$ in $L^2 (\Omega_1)$ and are close to the
  identity, as required for the identification of the limit strain in Lemma
  \ref{lem:identification-limiting-strain}.}}
  \smallskip 
 
  {\step{2: Rewriting of the deformation gradient}{The functions
  \[ G^h \assign \frac{(R^h)^{\top} \nabla_h y^h - I}{h^{\alpha - 1}} \]
  are uniformly bounded in $L^2$ by invariance of the norm by rotations:
  \begin{eqnarray}
    \| G^h \|_{0, 2, \Omega_1} 
    & = & h^{1 - \alpha}  \| \nabla_h y^h - R^h \|_{0, 2, \Omega_1} \leqslant
    C.  \label{eq:Gh-bounded}
  \end{eqnarray}
  Now, by the frame invariance of $W^h (x_3, \cdot)$
  \begin{eqnarray}
    W^h (x_3, \nabla_h y^h) & = & W^h (x_3, (R^h)^{\top} \nabla_h y^h)
    \nonumber\\
    & = & W_0 (x_3, (R^h)^{\top} \nabla_h y^h  (I + h^{\alpha - 1} 
    \tilde{B}^h (x_3))) \nonumber\\
    & = & W_0 (x_3, I + h^{\alpha - 1} A^h),  \nonumber % \label{eq:Ah-introduced}
  \end{eqnarray}
  where we have set
  \begin{eqnarray*}
    A^h (x) & \assign & \frac{(R^h)^{\top} \nabla_h y^h (x) - I}{h^{\alpha -
    1}} + (R^h)^{\top} \nabla_h y^h (x)  \tilde{B}^h (x_3)\\
    & = & G^h + (R^h)^{\top} \nabla_h y^h  \tilde{B}^h .
  \end{eqnarray*}}}
  
  {\step{3: Cutoff function}{We will be expanding $W_0 (x_3, I + h^{\alpha - 1}
  A^h)$ around $I$, but in order to apply the Taylor expansion successfully we
  need to stay where $W_0$ is twice differentiable, that is we must control
  $\tmop{dist} (I + h^{\alpha - 1} A^h, \tmop{SO} (3))$. We achieve this by
  multiplying with a cutoff function ${\large \chi}^h$, defined as the
  characteristic function of the ``good set'' $\{ x \in \Omega_1 : | G^h
  | \leqslant h^{- 1 / 2} \}$. Here we have:
  \[ h^{1 / 2} \gg h^{\alpha - 3 / 2} \geqslant {\large \chi}^h  | h^{\alpha -
     1} G^h | = {\large \chi}^h  | (R^h)^{\top} \nabla_h y^h - I | = {\large
     \chi}^h  | \nabla_h y^h - R^h |, \]
  which, because $| R^h | \equiv \sqrt{3}$, implies that ${\large \chi}^h  |
  \nabla_h y^h | \leqslant C.$ Consequently, since the $\tilde{B}^h$ are
  uniformly bounded as well:
  \begin{eqnarray}
    {\large \chi}^h  | h^{\alpha - 1} A^h | & = & {\large \chi}^h  | h^{\alpha
    - 1} G^h + h^{\alpha - 1}  (R^h)^{\top} \nabla_h y^h  \tilde{B}^h |
    \nonumber\\
    & \leqslant & {\large \chi}^h  | h^{\alpha - 1} G^h | +\mathcal{O}
    (h^{\alpha - 1}) \nonumber\\
    & = & o ( h^{1 / 2} ), \nonumber % \label{eq:est-hA}
  \end{eqnarray}
  and then
  \[ \tmop{dist} \left( I + h^{\alpha - 1}  {\large \chi}^h A^h, \tmop{SO} (3)
     \right) \leqslant | I + h^{\alpha - 1}  {\large \chi}^h A^h - I | 
     = o ( h^{1 / 2} ), \]
  so in the good sets we may indeed expand around $I$ for small values of $h$.
  Now, the sequence $(G^h)_{h > 0}$ is bounded in $L^2$ by
  {\eqref{eq:Gh-bounded}} so we may extract a subsequence converging weakly in
  $L^2$ to some $G \in L^2 (\Omega_1)$, which we consider from now on without
  relabelling. Furthermore the sequence $( {\large
  \chi}^h )_{h > 0}$ is essentially bounded and ${\large \chi}^h
  \rightarrow 1$ in measure in $\Omega_1$. Indeed $| \{ | {\large
  \chi}^h - 1 | > \varepsilon \} | = | \{ | G^h | > h^{- 1 /
  2} \} | \rightarrow 0 \text{ as } h \rightarrow 0$ because $\| G^h \|_{0, 2,
  \Omega_1} \leqslant C$ uniformly. Consequently we have
  \[ {\large \chi}^h G^h \rightharpoonup G \text{ in } L^2 (\Omega_1) . \]
  Analogously, the sequence $( {\large \chi}^h  \tilde{B}^h )_{h >
  0}$ is essentially bounded and converges in measure to $\tilde{B}$ because
  $| \{ | {\large \chi}^h  \tilde{B}^h - \tilde{B} | >
  \varepsilon \} | \leqslant | \{ | \tilde{B}^h - \tilde{B} | >
  \varepsilon \} | + | \{ {\large \chi}^h = 0 \} \cap \{ |
  \tilde{B}^{} | > \varepsilon \} | \rightarrow 0$. Hence, using again
  the strong convergence $(R^h)^{\top} \nabla_h y^h \rightarrow I$ in $L^2
  (\Omega_1)$ (Lemma \ref{lem:approx-rotations}):
  \[ (R^h)^{\top} \nabla_h y^h  {\large \chi}^h  \tilde{B}^h \rightharpoonup
     \tilde{B} \text{ in } L^2 (\Omega_1) . \]
  So we conclude
  \begin{equation*}
%    \label{eq:convergence-Ah} 
    {\large \chi}^h A^h \rightharpoonup A \assign G
    + \tilde{B} \text{ in } L^2 (\Omega_1) .
  \end{equation*}}}
  
  {\step{4: Taylor expansion}{Because $W_0 (x_3, \cdot)_{| \tmop{SO} (3)} \equiv
  0$, for any fixed $x_3$ the lower order terms of its Taylor expansion
  \[ W_0 (x_3, I + E) = W_0 (x_3, I) + D W_0 (x_3, I) [E] + \frac{1}{2} D^2
     W_0 (x_3, I) [E, E] + o (| E |^2) \]
  vanish and we have (for small enough $h$, as explained above)
  \[ W_0 \left( x_3, I + h^{\alpha - 1}  {\large \chi}^h A^h \right) =
     \frac{1}{2} Q_3 \left( x_3, h^{\alpha - 1}  {\large \chi}^h A^h \right) +
     \eta^h \left( x_3, h^{\alpha - 1}  {\large \chi}^h A^h \right), \]
  where $\eta^h ( x_3, h^{\alpha - 1}  {\large \chi}^h A^h ) = o
  ( h^{2 \alpha - 2} | {\large \chi}^h A^h |^2 )$
  represents the higher order terms. Defining the uniform bound
  \[ \omega (s) \assign \underset{- 1 \leqslant 2 r \leqslant 1}{\tmop{ess}
     \sup}  \underset{| M | \leqslant s}{\sup}  | \eta^h (r, M) |, \]
  we have $\omega (s) = o (s^2)$ by Assumption
  \ref{main-assumptions}.\ref{assumption:modulus}, and integrating over the
  rescaled domain $\Omega_1$ we obtain the estimate:
  
\begin{align}
   \label{eq:first-est}
   & \frac{1}{h^{2 \alpha - 2}} \int_{\Omega_1} W^h (x_3,
   \nabla_h y^h) \mathd x \notag \\ 
   & \quad \geqslant~ \frac{1}{h^{2 \alpha - 2}}  \int_{\Omega_1}
   W^h ( x_3, I + {\large \chi}^h h^{\alpha - 1} A^h ) \mathd x \notag \\ 
   & \quad \geqslant~ \frac{1}{h^{2 \alpha - 2}}  \int_{\Omega_1} \frac{h^{2 \alpha -
   2}}{2} Q_3 ( x_3, {\large \chi}^h A^h ) - \omega ( |
   h^{\alpha - 1}  {\large \chi}^h A^h | ) \mathd x \notag \\ 
   & \quad =~ \frac{1}{2} 
   \int_{\Omega_1} Q_3 ( x_3, {\large \chi}^h A^h ) - \frac{1}{h^{2
   \alpha - 2}}  \int_{\Omega_1} \omega ( | h^{\alpha - 1}  {\large
   \chi}^h A^h | ) \mathd x. 
\end{align}
  }}
  \smallskip
  
  {\step{5: The limit inferior}{In order to pass to the limit, for the first
  integral on the right hand side of {\eqref{eq:first-est}} we use that $Q_3$
  is positive semidefinite, therefore convex and continuous, and the integral
  is sequentially weakly lower semicontinuous. For the second integral we use
  again Assumption \ref{main-assumptions}.\ref{assumption:modulus} and the
  fact that $| h^{\alpha - 1}  {\large \chi}^h A^h | \leqslant h^{1
  / 2}$ to obtain the bound (uniform over $\Omega_1$):
  \[ \frac{\omega ( | h^{\alpha - 1}  {\large \chi}^h A^h |
     )}{| h^{\alpha - 1}  {\large \chi}^h A^h |^2} \leqslant
     \underset{| s | \leqslant h^{1 / 2}}{\sup}  \frac{\omega (s)}{s^2}
     \longrightarrow 0 \text{ as } h \rightarrow 0. \]
  But then, because ${\large \chi}^h A^h$ converges weakly in $L^2$, we have
  $\| {\large \chi}^h A^h \|_{0, 2, \Omega_1}^2 \leqslant C$ and
  \begin{eqnarray*}
    \frac{1}{h^{2 \alpha - 2}}  \int_{\Omega_1} \omega \left( \left| h^{\alpha
    - 1}  {\large \chi}^h A^h \right| \right) \mathd x & = & \int_{\Omega_1}
    \frac{\omega \left( \left| h^{\alpha - 1}  {\large \chi}^h A^h \right|
    \right)}{\left| h^{\alpha - 1}  {\large \chi}^h A^h \right|^2} 
    \frac{\left| h^{\alpha - 1}  {\large \chi}^h A^h \right|^2}{h^{2 \alpha -
    2}} \mathd x\\
    & \leqslant & \underset{| s | \leqslant h^{1 / 2}}{\sup}  \frac{\omega
    (s)}{s^2} \underbrace{\int_{\Omega_1} \left| {\large \chi}^h A^h \right|^2
    \mathd x}_{\text{uniformly bded.}} \longrightarrow 0
  \end{eqnarray*}
  \text{ as }$h \rightarrow 0$. Taking the $\lim \inf$ at both sides of
  {\eqref{eq:first-est}} we have:
  \begin{eqnarray*}
    \applicationspace{5 \tmop{em}} \underset{h \rightarrow 0}{\tmop{linf}} 
    \frac{1}{h^{2 \alpha - 2}}  \int_{\Omega_1} W^h (x_3, \nabla_h y^h) \mathd
    x %&  & \\
    & \geqslant & \underset{h \rightarrow 0}{\tmop{linf}}  \frac{1}{2} 
    \int_{\Omega_1} Q_3 \left( x_3, {\large \chi}^h A^h \right) \mathd x\\
    &  & \applicationspace{2 \tmop{em}} - \underset{h \rightarrow 0}{\lim} 
    \frac{1}{h^{2 \alpha - 2}}  \int_{\Omega_1} \omega (| h^{\alpha - 1} A^h
    |) \mathd x\\
    & \geqslant & \frac{1}{2}  \int_{\Omega_1} Q_3 (x_3, G + \tilde{B})
    \mathd x\\
    & \geqslant & \frac{1}{2}  \int_{\Omega_1} Q_2 (x_3, \check{G} +
    \check{\tilde{B}}) \mathd x,
  \end{eqnarray*}
  where the last estimate follows trivially from the definition of $Q_2$.
  \smallskip 

  \noindent If $\alpha \geqslant 3$, by Lemma
  \ref{lem:identification-limiting-strain} the limit strain $\check{G}$ has
  the representation
  \[ \check{G} (x) = G_0 (x') + x_3 G_1 (x'), \]
  with $G_1$ and $\tmop{sym} G_0$ given respectively by {\eqref{def:G1}} and
  {\eqref{def:G0}} as:
  \[ G_1 = \left\{\begin{array}{rll}
       - \sqrt{\theta} \nabla^2 v & \text{if} & \alpha = 3,\\
       - \nabla^2 v & \text{if} & \alpha > 3,
     \end{array}\right. \]
  and
  \[ \tmop{sym} G_0 = \left\{\begin{array}{rll}
       \theta \left( \grs u + \tfrac{1}{2} \nabla v \otimes \nabla v \right) &
       \text{if} & \alpha = 3,\\
       \grs u & \text{if} & \alpha > 3.
     \end{array}\right. \]
  We plug both into the last integral and use the fact that $Q_2 (x_3, \cdot)$
  vanishes on antisymmetric matrices to obtain
  \begin{align*}
    &\applicationspace{5 \tmop{em}} \underset{h \rightarrow 0}{\tmop{linf}} 
    \frac{1}{h^{2 \alpha - 2}}  \int_{\Omega_1} W^h_{\alpha} (x_3, \nabla_h
    y^h) \mathd x \\ 
    &\qquad \geqslant~ \frac{1}{2}  \int_{\Omega_1} Q_2 (x_3, G_0 (x') \nobracket
    + x_3 G_1 (x') + \check{\tilde{B}} (x_3)) \mathd x \applicationspace{1
    \tmop{em}}\\
    &\qquad =~ \frac{1}{2}  \int_{\omega} \overline{Q}_2 (\tmop{sym} G_0, G_1)
    \mathd x' .
  \end{align*}
  In particular, if $\alpha = 3$, we have again:
  \begin{eqnarray*}
    \underset{h \rightarrow 0}{\tmop{linf}}  \frac{1}{\theta h^4} 
    \int_{\Omega_1} W^h_{\alpha} (x_3, \nabla_h y^h) \mathd x & \geqslant &
    \frac{1}{2 \theta}  \int_{\omega} \overline{Q}_2 (\tmop{sym} G_0, G_1)
    \mathd x'\\
    & = & \frac{1}{2}  \int_{\omega} \overline{Q}_2 (\theta^{1 / 2} (\grs u +
    \tfrac{1}{2} \nabla v \otimes \nabla v), - \nabla^2 v) .
  \end{eqnarray*}

  {\noindent}If $\alpha \in (2, 3)$, then $\tmop{sym} G_0$ is
  unknown, so we must further relax the integrand. With the definition of
  $\overline{Q}_2^{\star}$ we see that the final integral above is
  \[ \frac{1}{2}  \int_{\Omega_1} Q_2 (x_3, G_0 - x_3 \nabla^2 v + \check{B})
     \mathd x \geqslant \frac{1}{2}  \int_{\omega} \overline{Q}^{\star}_2 (-
     \nabla^2 v) \mathd x' . \]}}
\end{proof}

We proceed now with the computation of the recovery sequences for each of the
three regimes discussed. We assume convexity of the domain in order to apply
the representation theorems in Section
\ref{sec:approximation-and-representation}.

\begin{theorem}[Upper bound, linearised Kirchhoff regime]
  \label{thm:upper-bound-lki}Assume $\omega$ is convex, let $\alpha \in (2,
  3)$ and $v \in X_{\alpha} \assign W^{1, 2} (\omega)$. There exists a
  sequence $(y^h)_{h > 0} \subset Y$ which $P^h$-converges to $v$ such that
  \[ \underset{h \rightarrow 0}{\tmop{lsup}} \mathcal{I}_{\alpha}^h (y^h)
     \leqslant \mathcal{I}_{\rm lKi} (v)^{}, \]
  with $\mathcal{I}_{\rm lKi}$ defined as in
  {\eqref{eq:energy-lki}} by
  \[ \mathcal{I}_{\rm lKi} (v) \assign
     \left\{\begin{array}{rl}
       \frac{1}{2}  \int_{\omega} \overline{Q}_2^{\star} (\nabla^2 v (x'))
       \mathd x' & \text{ if } v \in W^{2, 2}_{s h} (\omega),\\
       \infty & \text{ otherwise} .
     \end{array}\right. \]
\end{theorem}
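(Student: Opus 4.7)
If $v \notin W^{2,2}_{sh}(\omega)$ the bound is vacuous and one takes $y^h(x) = (x', hx_3 + h^{\alpha-1} v(x'))^\top$ as a trivially $P^h_{\alpha}$-convergent recovery sequence. For $v \in W^{2,2}_{sh}(\omega)$, a diagonal argument reduces the task to the case of smooth $v \in C^\infty(\overline{\omega})$ with $\det \nabla^2 v = 0$: the approximation results of Section~\ref{sec:approximation-and-representation}, which crucially use convexity of $\omega$, provide strong $W^{2,2}$-approximation of any such $v$ by smooth functions with singular Hessian, and $\mathcal{I}_{\rm lKi}$ is continuous along the approximation since $\overline{Q}_2^{\star}$ is a bounded quadratic form.

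For smooth $v$ with $\det \nabla^2 v = 0$, the plan is to use a Kirchhoff--von K\'arm\'an type ansatz
\begin{equation*}
  y^h(x', x_3) = \begin{pmatrix} x' + h^{2(\alpha-2)} u(x') - h^{\alpha-1} x_3 \nabla v(x') \\ h x_3 + h^{\alpha-2} v(x') \end{pmatrix} + h^{\alpha} x_3\, d(x', x_3) + \text{higher-order corrections},
\end{equation*}
with in-plane displacement $u$, Cosserat director $d$, and corrections absorbing quadratic cross terms. Let $E^\star(x') \in \mathbb{R}^{2 \times 2}_{\rm sym}$ denote the pointwise minimiser attaining $\overline{Q}_2^{\star}(-\nabla^2 v(x'))$ in~\eqref{eq:q2bar}. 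Using Theorem~\ref{thm:representation-matrix}, I choose $u$ together with a companion function $w$ with $\det \nabla^2 w = 0$ so that $E^\star = \grs u + \tfrac{1}{2} \nabla w \otimes \nabla w$; the piece $w$ is incorporated into the ansatz as an additional out-of-plane contribution at a subleading scale, so that it does not affect the limit $v^h_\alpha \to v$. Finally, set $d(x', x_3) \assign \mathcal{L}(x_3, E^\star(x') - x_3 \nabla^2 v(x') + \check{B}(x_3))$ via the linear map $\mathcal{L}$ in~\eqref{eq:mapping-L}, ensuring pointwise attainment of the minimum defining $Q_2$.

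A direct computation then yields $\nabla_h y^h = R^h (I + h^{\alpha-1} M^h)$ for a smooth rotation $R^h = I + \mathcal{O}(h^{\alpha-2})$ and a bounded $M^h$ converging uniformly to $\widehat{E^\star} + x_3(-\widehat{\nabla^2 v}) + B(x_3) + d \otimes e_3$. Frame invariance reduces the density to $W_0(x_3, I + h^{\alpha-1} M^h)$, whose argument stays in the smoothness neighbourhood of $\tmop{SO}(3)$ for $h$ small. Expanding via Assumption~\ref{main-assumptions}.\ref{assumption:modulus} and dividing by $h^{2\alpha-2}$, dominated convergence gives $\lim_h \mathcal{I}_\alpha^h(y^h) = \tfrac{1}{2} \int_{\Omega_1} Q_3$ evaluated at the pointwise-optimal argument, which by the definition of $\mathcal{L}$ and pointwise optimality of $E^\star$ collapses to $\tfrac{1}{2} \int_\omega \overline{Q}_2^{\star}(-\nabla^2 v)$. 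The main obstacle is precisely the invocation of Theorem~\ref{thm:representation-matrix}: the pointwise minimiser $E^\star$ need not itself be a symmetrised gradient, since its curl-curl need not vanish, so one cannot simply solve $\grs u = E^\star$; the non-elliptic Monge--Amp\`ere term $\tfrac{1}{2} \nabla w \otimes \nabla w$ is essential to realise an arbitrary symmetric tensor field as the in-plane symmetric strain of an admissible deformation, and the convexity of $\omega$ is what makes this representation available.
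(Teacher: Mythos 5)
Your high-level scaffolding (density reduction via Theorem~\ref{thm:density-singular-hessian}, a Cosserat director chosen through $\mathcal{L}$ from~\eqref{eq:mapping-L}, factoring out a rotation before Taylor-expanding $W_0$) is sound, but there are two substantive gaps. The first concerns the representation theorem. Theorem~\ref{thm:representation-matrix} produces a decomposition of the form $A = \grs g + \alpha\,\nabla^2 v$ with a \emph{scalar} field $\alpha$ and a vector field $g$, not $A = \grs u + \tfrac{1}{2}\nabla w\otimes\nabla w$. The latter, with the additional constraint $\det\nabla^2 w = 0$, cannot work: since $\mathrm{curl}\,\mathrm{curl}\bigl(\tfrac{1}{2}\nabla w\otimes\nabla w\bigr) = -\det\nabla^2 w$, imposing $\det\nabla^2 w = 0$ makes the Monge--Amp\`ere term itself a symmetrised gradient, so your representation would force $A$ to be a symmetrised gradient after all—exactly the obstruction you correctly identified. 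The scalar $\alpha$ is not incidental: in the working recovery sequence it enters as a mid-plane shift $h\,(x_3 - \alpha(x'))\,b_\varepsilon(x')$, moving the unstretched surface into the plate by a height $\alpha(x')$, which is precisely what generates the term $\alpha\,\nabla^2 v$ in the membrane strain. Your ansatz (attaching $w$ as a ``subleading out-of-plane contribution'') has no such shift mechanism, and any direct attempt to encode $\tfrac12 \nabla w\otimes\nabla w$ through a subleading out-of-plane term $h^{\gamma} w$ generates a cross term $h^{\alpha-2+\gamma}\nabla v\odot\nabla w$ that dominates the target scale $h^{\alpha-1}$ for $\alpha < 3$, so the ansatz does not close.

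The second gap is the linearised in-plane correction. With $\varepsilon = h^{\alpha-2}$, choosing $u$ with $\grs u + \tfrac12\nabla v\otimes\nabla v = 0$ and setting $y^{h\prime} = x' + \varepsilon^2 u$ leaves a residual in-plane stretch $\varepsilon^4\,\nabla^\top u\,\nabla u$ in $(\nabla' y^h)^\top\nabla' y^h - I$. The target strain scale here is $\varepsilon h$, so this residual must be $o(\varepsilon h)$, that is $\varepsilon^3 = o(h)$, equivalently $\alpha > 7/3$. For $\alpha\in(2, 7/3]$ the ``higher-order corrections'' you wave at are not small; what is actually needed is an \emph{exact} isometric immersion $\overline{y}_\varepsilon(x') = (x' + \varepsilon^2 u_\varepsilon(x'), \varepsilon v(x'))^\top$ provided by~\cite[Theorem 7]{friesecke_hierarchy_2006}, whose adapted frame $(\nabla\overline{y}_\varepsilon, b_\varepsilon)$ then supplies the rotation $R_\varepsilon$ that you posit but do not construct. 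In the paper's proof this nonlinear isometry, the mid-plane shift $\alpha$, and the representation $A_{\min} \approx \grs g + \alpha\nabla^2 v$ via Corollary~\ref{cor:representation-matrix-minimizer} are the three interlocking ingredients; your proposal replaces all three with devices that either misstate the available tools or break down on part of the interval $(2,3)$.
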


\begin{proof}
  We set $\varepsilon = h^{\alpha - 2}$, so that $h \ll \varepsilon \ll 1$ and
  $h^2 \ll \varepsilon h \ll 1$.
  \smallskip 

  {\step{1: Setup and recovery sequence}{The functional $\mathcal{I}_{\rm lKi}$ 
  is strongly continuous on $W^{2, 2}_{s h} (\omega)$ by the
  continuity and 2-growth of $\overline{Q}^{\star}_2$. By Theorem
  \ref{thm:density-singular-hessian} we have a set $\mathcal{V}_0$ of smooth
  maps with singular Hessian which is $W^{2, 2}$-dense in $W^{2, 2}_{s h}$,
  see {\eqref{def:V0}}. Therefore, by a standard argument (see, e.g., 
  \cite{braides_handbook_2006}) 
   it is enough to construct here the recovery
  sequence. Take then a smooth function $v \in \mathcal{V}_0$. Because $\|
  \nabla v \|_{\infty} < C$, for $\varepsilon$ small enough there exist by
  {\cite[Theorem 7]{friesecke_hierarchy_2006}} in-plane displacements
  $u_{\varepsilon} \in W^{2, 2} (\omega ; \mathbb{R}^2) \cap W^{2, \infty}
  (\omega ; \mathbb{R}^2)$ with uniform bounds in $\varepsilon$ such that the
  deformations
  \[ \overline{y}_{\varepsilon} (x') \assign \left(\begin{array}{c}
       x' + \varepsilon^2 u_{\varepsilon} (x')\\
       \varepsilon v (x')
     \end{array}\right) \]
  are isometries.\footnote{The uniform bounds for $\| u_{\varepsilon} \|_{2,
  2}$ follow from {\cite[Theorem 7]{friesecke_hierarchy_2006}}, equation
  (181), and those for $\| u_{\varepsilon} \|_{2, \infty}$ from the explicit
  construction done in the proof, in particular equations (183), (186) and
  (190).} That is: $\nabla^{\top} \overline{y}_{\varepsilon} \nabla
  \overline{y}_{\varepsilon} = I_2$, where
  \[ \nabla \overline{y}_{\varepsilon} = \left(\begin{array}{c}
       I_2 \\
       0 ~~ 0
     \end{array}\right) + \varepsilon \left(\begin{array}{cc}
       0_2 & \\
       \nabla^{\top} v & 
     \end{array}\right) + \varepsilon^2  \left(\begin{array}{c}
       \nabla u_{\varepsilon} \\
       0 ~~ 0
     \end{array}\right) \in \mathbb{R}^{3 \times 2} . \]
  Additionally the following normal vectors are unitary in $\mathbb{R}^3$:
  \begin{eqnarray*}
    b_{\varepsilon} (x') & \assign & \overline{y}_{\varepsilon, 1} (x') \wedge
    \overline{y}_{\varepsilon, 2} (x')\\
    & = & - \varepsilon \left(\begin{array}{c}
      \nabla v\\
      0
    \end{array}\right) + \left(\begin{array}{c}
      \varepsilon^3 \nabla u_{\varepsilon \nocomma 2} \cdot (v_{, 2}, - v_{,
      1})\\
      \varepsilon^3 \nabla u_{\varepsilon \nocomma 1} \cdot (- v_{, 2}, v_{,
      1})\\
      1 + \varepsilon^2 \tmop{tr} \nabla u_{\varepsilon} + \varepsilon^4 \det
      \nabla u_{\varepsilon}
    \end{array}\right)\\
    & = & e_3 - \varepsilon \hat{\nabla} v (x') + r_{\varepsilon} (x'),
  \end{eqnarray*}
  where the rest $r_{\varepsilon}$ satisfies
  \[ \| r_{\varepsilon} \|_{1, \infty} =\mathcal{O} (\varepsilon^2) \]
  by virtue of $\| u_{\varepsilon} \|_{2, \infty} \leqslant C$ and $\| \nabla
  v \|_{\infty} \leqslant C$. Consequently the matrices
  \[ R_{\varepsilon} \assign (\nabla \overline{y}_{\varepsilon},
     b_{\varepsilon}) = I + \varepsilon \left(\begin{array}{cc}
       0 & - \nabla v\\
       \nabla^{\top} v & 0
     \end{array}\right) + \underbrace{r_{\varepsilon} \otimes e_3 +
     \varepsilon^2  \hat{\nabla} u_{\varepsilon}}_{\backassign
     \tilde{r}_{\varepsilon}} \]
  are in $\tmop{SO} (3)$ for every $x' \in \omega$, with the remaining matrix
  $\tilde{r}_{\varepsilon}$ satisfying
  \[ \| \tilde{r}_{\varepsilon} \|_{1, \infty} =\mathcal{O} (\varepsilon^2) \]
  by the same arguments as before. Now, for some smooth functions $\alpha,
  g_1, g_2 \in C^{\infty} (\overline{\omega} ; \mathbb{R})$, $g \assign (g_1,
  g_2)$ and $d \in L^{\infty} (\Omega_1 ; \mathbb{R}^3)$ with $\nabla' d \in
  L^{\infty} (\Omega_1 ; \mathbb{R}^{3 \times 2})$ and $D^h \in C^{\infty}
  (\overline{\Omega}_1 ; \mathbb{R}^3)$ to be determined later, set
  \begin{eqnarray}
    y^h (x', x_3) & \assign & \overline{y}_{\varepsilon} (x') + h (x_3 -
    \alpha (x')) b_{\varepsilon} (x') + \varepsilon h (g (x'), 0) \nonumber\\
    &  & \applicationspace{1 \tmop{em}} + \varepsilon h^2  \int_0^{x_3} d
    (x', \xi) \mathd \xi + D^h (x', x_3) .  \label{eq:thm:lki:recovery}
  \end{eqnarray}
  We will prove
  \[ \mathcal{I}_{\alpha}^h (y^h)  \underset{h \rightarrow 0}{\longrightarrow}
     \mathcal{I}_{\rm lKi} (v) . \]
  as well as $P^h_{\alpha}  (y^h) \rightarrow v$ in $W^{1, 2}$ for some
  constants $R^h \in \tmop{SO} (3), c^h \in \mathbb{R}^3$.}}
  \smallskip 
  
  {\step{2: Preliminary computations}{In order to compute the limit of
  $\frac{1}{h^{2 \alpha - 2}}  \int_{\Omega_1} W_0 (x_3, \nabla_h y^h  (I +
  \varepsilon hB^h))$ we start with the gradient of the recovery sequence:
  \begin{eqnarray*}
    \nabla_h y^h & = & (\nabla \overline{y}_{\varepsilon}, 0) + h \nabla_h 
    [(x_3 - \alpha) b_{\varepsilon}]\\
    &  & \applicationspace{1 \tmop{em}} + \varepsilon h [\hat{\nabla} g + d
    \otimes e_3] + \nabla_h D^h + o (\varepsilon h) .
  \end{eqnarray*}
  For the term in $h$ and any $i \in \{ 1, 2, 3 \}$ and $j \in \{ 1, 2 \}$ we
  have
  \[ \partial_j [(x_3 - \alpha (x')) b_{\varepsilon} (x')]_i = \partial_j
     [(x_3 - \alpha) b_{\varepsilon \nocomma i}] = (x_3 - \alpha)
     b_{\varepsilon \nocomma i, j} - \varepsilon \alpha_{, j} b_{\varepsilon
     \nocomma i} . \]
  Also: $\frac{1}{h} \partial_3 [(x_3 - \alpha) b_{\varepsilon}] = \frac{1}{h}
  b_{\varepsilon}$, so that
  \begin{eqnarray*}
    \nabla_h [(x_3 - \alpha) b_{\varepsilon}] & = & (x_3 - \alpha) 
    \hat{\nabla} b_{\varepsilon} - b_{\varepsilon} \otimes \hat{\nabla} \alpha
    + \tfrac{1}{h} b_{\varepsilon} \otimes e_3\\
    & = & (\alpha - x_3)  (\varepsilon \hat{\nabla}^2 v - \hat{\nabla}
    r_{\varepsilon}) - b_{\varepsilon} \otimes \hat{\nabla} \alpha +
    \tfrac{1}{h} b_{\varepsilon} \otimes e_3 .
  \end{eqnarray*}
  Substituting back into the gradient yields:
  \begin{eqnarray}
    \nabla_h y^h & = & R_{\varepsilon} + \varepsilon h \underbrace{[(\alpha -
    x_3)  \hat{\nabla}^2 v + \hat{\nabla} g + d \otimes e_3 + o
    (1)]}_{\backassign A^h} \nonumber\\
    &  & \applicationspace{1 \tmop{em}} - hb_{\varepsilon} \otimes
    \hat{\nabla} \alpha + \nabla_h D^h . \nonumber % \label{eq:thm:lki:grad}
  \end{eqnarray}
  Because we intend to use the frame invariance of the energy, we will need
  the product of $\nabla_h y^h$ with $R_{\varepsilon}^{\top} = I +\mathcal{O}
  (\varepsilon)$. First we have:
  \[ \varepsilon hR_{\varepsilon}^{\top} A^h = \varepsilon hA^h + o
     (\varepsilon h) = \varepsilon hA^h, \]
  where we have subsumed terms $o (\varepsilon h)$ into the $o (1)$ inside
  $A^h$. Using $| b_{\varepsilon} | \equiv 1$ and $\overline{y}_{\varepsilon,
  i} \perp b_{\varepsilon}$ we also have $R_{\varepsilon}^{\top}
  b_{\varepsilon} = e_3$. Therefore
  \begin{equation}
    \label{eq:thm:lki:grad2} R_{\varepsilon}^{\top} \nabla_h y^h = I_3 +
    \varepsilon hA^h \underbrace{- he_3 \otimes \hat{\nabla} \alpha +
    R_{\varepsilon}^{\top} \nabla_h D^h}_{= : F^h} .
  \end{equation}}}
  \smallskip 
  
  {\step{3: Convergence of the energies}{The next step is a Taylor expansion
  around the identity. Given that the energy is scaled by $(\varepsilon h)^{-
  2}$, only those terms scaling as $\varepsilon h$ in
  {\eqref{eq:thm:lki:grad2}} will remain: anything beyond that will not be
  seen and anything below will make the energy blow up. This means that we
  must choose $D^h$ so that $F^h = o (\varepsilon h)$. In
  {\cite{friesecke_hierarchy_2006}}, {\cite{schmidt_plate_2007}} it was
  possible for the authors to obtain exactly $F^h = 0$ by choosing $D^h$
  adequately, but in our case this will not be possible.\footnote{Technically,
  this is due to the fact that the term $he_3 \otimes \hat{\nabla} \alpha$ is
  a row in a matrix instead of a column, which makes it impossible to exactly
  compensate because $R_{\varepsilon}^{\top} \nabla_h D^h$ effectively only
  provides a column vector to work with. Indeed,
  \[ R_{\varepsilon}^{\top} \nabla_h D^h = \nabla_h D^h + \varepsilon
     \left(\begin{array}{cc}
       0 & \nabla v\\
       - \nabla^{\top} v & 0
     \end{array}\right) \nabla_h D^h + \tilde{r}^{\top}_{\varepsilon} \nabla_h
     D^h, \]
  {\noindent}so in order to cancel $he_3 \otimes \hat{\nabla} \alpha$ we must
  have that the leading term $\nabla_h D^h$ be of order $h$. But then
  $\nabla_h D^h = \left( \nabla' D^h, \frac{1}{h} D_{, 3}^h \right)$ requires
  that $D^h$ scale at least as $h^2 \ll \varepsilon h \ll 1$ so we ``lose''
  the first two columns of $\nabla_h D^h$.} If we set $D^h \assign h^2 D$ for
  some smooth $D$, we have
  \begin{eqnarray*}
    F^h & = & h [D_{, 3} \otimes e_3 + \varepsilon (v_{, 1} D_{3, 3}, v_{, 2}
    D_{3, 3}, - v_{, 1} D_{1, 3} - v_{, 2} D_{2, 3}) \otimes e_3 \nobracket\\
    &  & \applicationspace{1 \tmop{em}} \nobracket - e_3 \otimes \hat{\nabla}
    \alpha + o (\varepsilon)]\\
    & = : & h \tilde{F}^h .
  \end{eqnarray*}
  This means that we must solve the equations $\tilde{F}^h = o (\varepsilon)$.
  Although these have no solution the symmetrised version
  does,\footnote{Dividing by $h$ we arrive at:
  \[ \left\{\begin{array}{rll}
       D_{1, 3} + \varepsilon v_{, 1} D_{3, 3} & = & \alpha_{, 1} + o
       (\varepsilon),\\
       D_{2, 3} + \varepsilon v_{, 2} D_{3, 3} & = & \alpha_{, 2} + o
       (\varepsilon),\\
       D_{3, 3} - \varepsilon v_{, 1} D_{1, 3} - \varepsilon v_{, 2} D_{2, 3}
       & = & o (\varepsilon),
     \end{array}\right. \]
  with solution:
  \[ D (x', x_3) = x_3  \hat{\nabla} \alpha + x_3 \varepsilon \nabla v \cdot
     \nabla \alpha e_3 . \]} so that for every smooth choice of $\alpha$ we
  can pick a bounded $D^h$ such that
  \begin{equation}
    \label{eq:thm:lki:fh} \tilde{F}^h_s = 0 \text{, \ and \ } \tilde{F}^h
    =\mathcal{O} (1),
  \end{equation}
  a fact that we will exploit next. By frame invariance,
  {\eqref{eq:thm:lki:grad2}} and $F^h = h \tilde{F}^h$, we can write
  \begin{eqnarray*}
    \applicationspace{2 \tmop{em}} W_0 (x_3, \nabla_h y^h  (I + \varepsilon
    hB^h)) 
    & = & W_0 (x_3, R_{\varepsilon}^{\top} \nabla_h y^h  (I + \varepsilon
    hB^h))\\
    \applicationspace{2 \tmop{em}} & = & W_0 (x_3, (I + \varepsilon hA^h + h
    \tilde{F}^h) (I + \varepsilon hB^h))\\
    & = & W_0 (x_3, I + h \underbrace{(\varepsilon (A^h + B^h) + \tilde{F}^h
    + o (\varepsilon))}_{= : C^h}) .
  \end{eqnarray*}
  Because of {\eqref{eq:thm:lki:fh}} by our choice of $D$ we need to subtract
  the antisymmetric part of $\tilde{F}^h$, which we do by means of another
  rotation and frame invariance:
  \begin{eqnarray*}
    W_0 (x_3, I + hC^h) & = & W_0 (x_3, \mathe^{- h \tilde{F}_a^h}  (I +
    hC^h))\\
    & = & W_0 (x_3, (I - h \tilde{F}^h_a +\mathcal{O} (h^2)) (I + hC^h))\\
    & = & W_0 (x_3, I + hC^h - h \tilde{F}^h_a +\mathcal{O} (h^2))\\
    & = & W_0 (x_3, I + \varepsilon h (A^h + B^h) + o (\varepsilon h)) .
  \end{eqnarray*}
  Now whenever $h$ is small enough that $I + hC^h$ belongs to the
  neighbourhood of $\tmop{SO} (3)$ where $W_0$ is twice differentiable, we can
  apply Taylor's theorem and the fact that $Q_3$ vanishes on antisymmetric
  matrices to see that, as $h
  \rightarrow 0$:
  \begin{eqnarray*}
    \frac{1}{\varepsilon^2 h^2} W_0 (x_3, \nabla_h y^h  (I + \varepsilon
    hB^h)) & = & \frac{1}{2} Q_3 (x_3, (A^h + B^h)_s) + o (1)\\
    & \rightarrow & \frac{1}{2} Q_3 (x_3, A_s + B_s)
  \end{eqnarray*}
  where
  \[ A_s = (\alpha - x_3)  \hat{\nabla}^2 v + \hat{\nabla}_s g + (d \otimes
     e_3)_s . \]
  We choose
  \[ d (x', x_3) =\mathcal{L} (x_3, (\alpha - x_3) \nabla^2 v + \nabla_s g +
     \check{B}_s) - B_{\cdot 3}, \]
  with $\mathcal{L}$ the map from \eqref{eq:mapping-L}, which by 
  \eqref{eq:mapping-L} and \eqref{eq:Q2-L-bounds} is linear in the second 
  component and satisfies 
  $| \mathcal{L} (t,A) | \lesssim | A |$ uniformly in $t$, and 
  $B_{\cdot 3}$ the third column of $B$. Because the matrix $(\alpha - x_3)
  \nabla^2 v + \nabla_s g + \check{B}_s$ is bounded uniformly in $x'$, by
  the bound \eqref{eq:Q2-L-bounds} the map 
  \[ x \mapsto \int_0^{x_3} \mathcal{L} (\xi, (\alpha - \xi)  \hat{\nabla}^2 v
     + \hat{\nabla}_s g + B_s (\xi)) \mathd \xi \]
  is in $W^{1, \infty} (\Omega_1 ; \mathbb{R}^3)$ and $y^h \in W^{1, 2}$ as
  required (for the derivatives with respect to $x'$ note that $v, g$ are smooth and $B$
  independent of $x'$).
  
  Now, all quantities being bounded, by dominated convergence:
  \begin{eqnarray*}
    \mathcal{I}_{\alpha}^h (y^h) & \rightarrow & \frac{1}{2}  \int_{\Omega_1}
    Q_3 (x_3, (\alpha - x_3)  \hat{\nabla}^2 v + \hat{\nabla}_s g + (d \otimes
    e_3)_s + B_s)\\
    & = & \frac{1}{2}  \int_{\Omega_1} Q_2 \left( x_3, (\alpha - x_3)
    \nabla^2 v + \grs g + \check{B}_s \right) .
  \end{eqnarray*}
  Note that a final step is required to obtain convergence to 
  $\mathcal{I}_{\rm lKi} (v)$.}}
  \smallskip 
  
  {\step{4: Convergence of the deformations: $P^h_{\alpha}  (y^h) \rightarrow v$
  in $W^{1, 2}$}{Choose $R^h \equiv I \in \tmop{SO} (3), c^h \equiv 0 \in
  \mathbb{R}^3$ in the definition of $\rho$ for
  {\eqref{def:scaled-in-out-of-plane-displacements}}. We have
  \[ P^h_{\alpha}  (y^h) = \frac{1}{\varepsilon}  \int_{- 1 / 2}^{1 / 2} y^h_3
     (x', x_3) \mathd x_3, \]
  where in {\eqref{eq:thm:lki:recovery}} we defined $y^h_3 (x', x_3) =
  \varepsilon v (x') + h (x_3 - \alpha (x')) b_{\varepsilon \nocomma 3} (x')
  +\mathcal{O} (\varepsilon h)$. Then:
  \begin{eqnarray*}
    | P^h_{\alpha}  (y^h) - v |^2 & = & \bigg| \frac{1}{\varepsilon}  \int_{-
    1 / 2}^{1 / 2} [\varepsilon v + h (x_3 - \alpha) b_{\varepsilon \nocomma
    3} +\mathcal{O} (\varepsilon h)] \mathd x_3 - v \bigg|^2\\
    & = & \mathcal{O} (\varepsilon^{- 2} h^2),
  \end{eqnarray*}
  and consequently $\| P^h_{\alpha} (y^h) - v \|_{0, 2} \rightarrow 0$. An
  analogous computation for the derivatives shows strong convergence in $W^{1,
  2}$.}}
  \smallskip 
  
  {\step{5: Simultaneous convergence}{Finally, as in {\cite[Theorem
  3.2]{schmidt_plate_2007}}, in order for the energy to converge to the true
  limit, we must pick $\alpha$ and $g$ in {\eqref{eq:thm:lki:recovery}} so as
  to approximate the minimum $\overline{Q}_2$. This is done with Corollary
  \ref{cor:representation-matrix-minimizer}, substituting sequences of smooth
  functions $(\alpha_k)_{k \in \mathbb{N}}, (g_k)_{k \in \mathbb{N}}$ for the
  functions $\alpha, g$. Then, for each fixed $k$ we have:
  \begin{eqnarray*}
    \mathcal{I}_{\alpha}^h (y_k^h) & \underset{h \rightarrow 0}{\rightarrow} &
    \frac{1}{2}  \int_{\Omega_1} Q_2 \left( x_3, (\alpha_k - x_3) \nabla^2 v +
    \grs g_k + \check{B}_s \right)\\
    & = & \frac{1}{2}  \int_{\omega} \overline{Q}^{\star}_2 (- \nabla^2 v)
    \mathd x' + o (1)_{k \rightarrow \infty},
  \end{eqnarray*}
  and
  \[ \| P^h_{\alpha}  (y^h_k) - v \|_{1, 2}^2 \leqslant C (k) \varepsilon^{-
     2} h^2 . \]
  And by a diagonal argument we can find $(y^h)_{h > 0}$ whose energy
  converges to $\mathcal{I}_{\rm lKi} (v)$ while maintaining
  the convergence of the deformations.}}
\end{proof}

\begin{theorem}[Upper bound, von Kármán regime]
  \label{thm:upper-bound-vk}Let $\alpha = 3$ and consider displacements $(u,
  v) \in X_{\alpha = 3} \assign W^{1, 2} (\omega ; \mathbb{R}^2) \times W^{1,
  2} (\omega ; \mathbb{R})$. There exists a sequence $(y^h)_{h > 0} \subset Y$
  which $P^h_{\theta}$-converges to $(u, v)$ such that
  \[ \underset{h \rightarrow 0}{\lim}  \frac{1}{\theta} \mathcal{I}_{\alpha}^h
     (y^h) =\mathcal{I}^{\theta}_{\rm vK} (u, v)^{}, \]
  with $\mathcal{I}^{\theta}_{\rm vK}$ defined as in
  {\eqref{eq:energy-vk}} by
  \[ \mathcal{I}^{\theta}_{\rm vK} (u, v) \assign \frac{1}{2} 
     \int_{\omega} \overline{Q}_2 (\theta^{1 / 2} (\grs u + \tfrac{1}{2}
     \nabla v \otimes \nabla v), - \nabla^2 v) \]
  over $X_{\alpha = 3}^0 = W^{1, 2} (\omega ; \mathbb{R}^2) \times W^{2, 2}
  (\omega ; \mathbb{R})$ and as $\infty$ elsewhere.
\end{theorem}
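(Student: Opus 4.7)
The proof is a standard recovery-sequence construction at the von Kármán scaling, simpler than the linearised Kirchhoff case of Theorem \ref{thm:upper-bound-lki} because there is no isometry constraint. Since $\overline{Q}_2$ is continuous with quadratic growth, $\mathcal{I}^{\theta}_{\rm vK}$ is strongly continuous on $W^{1,2}(\omega;\mathbb{R}^2) \times W^{2,2}(\omega)$, so a density and diagonal argument reduces the problem to smooth data $(u,v) \in C^{\infty}(\overline{\omega};\mathbb{R}^2) \times C^{\infty}(\overline{\omega})$.

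For such smooth data I would use the von Kármán ansatz
\[ y^h(x', x_3) \assign \begin{pmatrix} x'+\theta h^2 u(x') \\ \sqrt{\theta}\,h\,v(x') \end{pmatrix} + h\,x_3\,b^h(x') + h^2 \int_0^{x_3} d(x', \xi)\,\mathd \xi, \]
where $b^h(x') = e_3 - \sqrt{\theta}\,h\,\hat{\nabla} v(x') + O(h^2)$ is a smooth approximation of the unit normal to the deformed mid-surface and the corrector $d \in L^{\infty}(\Omega_1;\mathbb{R}^3)$ is to be chosen. Following Step~2 of Theorem \ref{thm:upper-bound-lki}, one produces rotations $R^h(x') \in \mathrm{SO}(3)$ (essentially the exponential of the antisymmetric matrix $\sqrt{\theta}\,h\,(e_3 \otimes \hat{\nabla} v - \hat{\nabla} v \otimes e_3)$) such that
\[ (R^h)^{\top} \nabla_h y^h\,\bigl(I + \sqrt{\theta}\,h^2 B^h(x_3)\bigr) = I + h^2 M^h + o(h^2) \quad \text{in } L^{\infty}, \]
with the upper-left $2{\times}2$ block of $M^h$ convergent in $L^{\infty}$ to
\[ \theta(\grs u + \tfrac{1}{2}\nabla v \otimes \nabla v) - \sqrt{\theta}\,x_3\nabla^2 v + \sqrt{\theta}\,\check{B}(x_3), \]
matching exactly $\mathrm{sym}\,G_0 + x_3 G_1 + \check{\tilde B}$ from Lemma \ref{lem:identification-limiting-strain}, and the last column of $M^h$ determined by $d + \sqrt{\theta}\,B_{\cdot 3}$.

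I would then pick
\[ d(x',x_3) \assign \mathcal{L}\Bigl(x_3,\, \theta(\grs u + \tfrac{1}{2}\nabla v \otimes \nabla v)(x') - \sqrt{\theta}\,x_3 \nabla^2 v(x') + \sqrt{\theta}\,\check{B}(x_3)\Bigr) - \sqrt{\theta}\,B_{\cdot 3}(x_3), \]
with $\mathcal{L}$ from \eqref{eq:mapping-L}, so that $M^h$ pointwise attains the minimum defining $Q_2$ and $Q_3(x_3, M^h) = Q_2(x_3, \check{M}^h_{\mathrm{sym}})$. By \eqref{eq:Q2-L-bounds} and the smoothness of $u, v$ one has $d \in L^{\infty}$, hence $y^h \in W^{1,2}(\Omega_1;\mathbb{R}^3)$. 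Frame invariance and a Taylor expansion of $W_0$ around $I$ via a cutoff to the good set $\{|M^h| \leq h^{-1/2}\}$ and Assumption \ref{main-assumptions}.\ref{assumption:modulus}, exactly as in Steps~3--4 of Theorem \ref{thm:upper-bound-lki}, then yield the dominated limit
\[ \frac{1}{\theta h^4} W_0\bigl(x_3, \nabla_h y^h(I + \sqrt{\theta}\,h^2 B^h)\bigr) \longrightarrow \tfrac{1}{2}\, Q_2\bigl(x_3, \sqrt{\theta}(\grs u + \tfrac{1}{2}\nabla v \otimes \nabla v) - x_3 \nabla^2 v + \check{B}(x_3)\bigr), \]
whose integral over $\Omega_1$ equals, by \eqref{def:Q-bar} and homogeneity of $Q_2$, precisely $\mathcal{I}^{\theta}_{\rm vK}(u,v)$.

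Finally, $P^h_\theta$-convergence with $R^h \equiv I$, $c^h \equiv 0$ is immediate: the contributions of the $x_3$-dependent terms $h x_3 b^h$ and $h^2 \int_0^{x_3} d\,\mathd\xi$ to the averages $u^h_\theta$, $v^h_\theta$ are, after dividing by $\theta h^2$ and $\sqrt{\theta}\,h$ respectively, of the form (zero-mean in $x_3$)$\cdot$(order $O(h)$), hence vanish strongly in $W^{1,2}(\omega)$, so $u^h_\theta \to u$ and $v^h_\theta \to v$. The single delicate point is the choice of $d$: any failure to exactly attain the relaxation via $\mathcal{L}$ would leave a residual transversal strain and produce a limit featuring $Q_3$ rather than $Q_2$, hence strictly larger than $\mathcal{I}^{\theta}_{\rm vK}(u,v)$; the explicit formula above handles this and is the main bookkeeping step of the proof.
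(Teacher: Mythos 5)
Your overall strategy matches the paper's own proof: a von Kármán recovery-sequence ansatz, a transverse corrector chosen via the map $\mathcal{L}$ from \eqref{eq:mapping-L} so that the pointwise relaxation $Q_3\to Q_2$ is attained, a frame-invariance/rotation expansion of $W_0$ around $I$, dominated convergence, and density of smooth data. However, the corrector in your ansatz is mis-scaled by one power of $h$, and this breaks the argument.

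You write the corrector as $h^2\int_0^{x_3}d(x',\xi)\,\mathd\xi$. The transverse part of its rescaled gradient is
\[
  \tfrac{1}{h}\,\partial_3\!\left(h^2\int_0^{x_3}d(x',\xi)\,\mathd\xi\right) = h\, d(x',x_3)\otimes e_3,
\]
which enters $\nabla_h y^h$ at the rotation scale $O(h)$, not the strain scale $O(h^2)$. Since $d\otimes e_3$ is generically not antisymmetric, its symmetric part yields an $O(h)$ strain. After Taylor expanding $W_0$ this gives $W_0 \sim h^2 Q_3(\operatorname{sym}(d\otimes e_3))$, and $\theta^{-1}h^{-4}\int_{\Omega_1}W_0$ diverges rather than converging to $\mathcal{I}^\theta_{\rm vK}(u,v)$. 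The same mis-scaling defeats the $P^h_\theta$-convergence: the in-plane components of $h^2\int_0^{x_3}d$, averaged in $x_3$ and divided by $\theta h^2$, are $O(1)$, not $O(h)$ as you claim, so $u^h_\theta$ would converge to $u + \tfrac{1}{\theta}\int_{-1/2}^{1/2}\!\int_0^{x_3}d'(x',\xi)\,\mathd\xi\,\mathd x_3$ rather than to $u$. (Your own words \emph{``of the form (zero-mean in $x_3$)$\cdot$(order $O(h)$)''} are internally consistent with an $h^3$ corrector, not the $h^2$ you wrote, which suggests this is a slip carried over from the linearised Kirchhoff ansatz $\varepsilon h^2\int_0^{x_3}d$ where the extra power is hidden in $\varepsilon=h^{\alpha-2}\to h$.)

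The fix, as in the paper, is to place the corrector at order $h^3$: take $\theta h^3 d(x',x_3)$ with $d$ itself an $x_3$-primitive of the $\mathcal{L}$-based integrand you describe (plus a term compensating $\tfrac12|\hat\nabla v|^2 e_3$ arising from $E^\top E$). Then $\tfrac{1}{h}\partial_3(\theta h^3 d)=\theta h^2\,\partial_3 d$ lands at the required $h^2$ scale, and the averaged in-plane contribution divided by $\theta h^2$ is $O(h)$ as needed. With that correction, the rest of your outline goes through and reproduces the paper's proof.
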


\begin{proof}
  In order to build the recovery sequence $(y^h)_{h > 0}$ we will use the map
  $\mathcal{L}: \left( - 1 / 2, 1 / 2 \right) \times \mathbb{R}^{2 \times 2}
  \rightarrow \mathbb{R}^3$ given by \eqref{eq:mapping-L}, which for
  each $t$ realises the minimum of $Q_3 (t, \hat{A} + c \otimes e_3), A \in
  \mathbb{R}^{2 \times 2}$, i.e.
  \[ Q_2 (t, A) = Q_3 (t, \hat{A} +\mathcal{L} (t, A) \otimes e_3) = Q_3 (t,
     \hat{A} + (\mathcal{L} (t, A) \otimes e_3)_s), \]
  where the last equality follows from the fact that $Q_2$ vanishes on
  antisymmetric matrices. Recall from \eqref{eq:Q2-L-bounds} that 
  $\mathcal{L} (t, \cdot)$ is linear for every $t$ and that $| \mathcal{L} (t,
  A) | \lesssim | A |$ uniformly in $t$.
  
  The functional $\mathcal{I}^{\theta}_{\rm vK}$ is clearly continuous
  in $X^0_{\alpha} = W^{1, 2} (\omega ; \mathbb{R}^2) \times W^{2, 2} (\omega
  ; \mathbb{R})$ with the {\em strong} topologies, so a standard argument 
  {\cite{braides_handbook_2006}} shows that it is enough to consider $(u, v) \in
  C^{\infty} (\overline{\omega} ; \mathbb{R}^2) \times C^{\infty}
  (\overline{\omega} ; \mathbb{R})$, which is dense in $X^0_{\alpha}$. We
  define:
  \begin{eqnarray}
    y^h (x', x_3) & \assign & \left(\begin{array}{c}
      x'\\
      hx_3
    \end{array}\right) + \left(\begin{array}{c}
      \theta h^2 u (x')\\
      \sqrt{\theta} hv (x')
    \end{array}\right) - \sqrt{\theta} h^2 x_3  \left(\begin{array}{c}
      \nabla v (x')\\
      0
    \end{array}\right) \nonumber\\
    &  & \applicationspace{1 \tmop{em}} + \theta h^3 d (x', x_3) 
    \nonumber %\label{eq:vK-Ansatz}
  \end{eqnarray}
  where $d \in W^{1, \infty} (\Omega_1 ; \mathbb{R}^3)$ is a vector field to
  be determined along the proof.
  \smallskip 
  
  {\step{1: Approximation of the energy}{A direct computation yields
  \begin{eqnarray*}
    \nabla_h y^h & = & I + \left(\begin{array}{cc|c}
      \theta h^2 \nabla u &  & - h \sqrt{\theta} \nabla v\\
      \hline
 %     &  & \\
 %     \hline
      h \sqrt{\theta} \nabla^{\top} v &  & 0
    \end{array}\right) - h^2 \theta \left(\begin{array}{ccc}
      x_3 \theta^{- 1 / 2} \nabla^2 v &  & 0\\
%      &  & \\
      0 &  & 0
    \end{array}\right)\\
    &  & \quad + h^2 \theta \partial_3 d \otimes e_3 +\mathcal{O} (h^3)\\
    & = & I + h \sqrt{\theta}  \underset{E}{\underbrace{(e_3 \otimes
    \hat{\nabla} v - \hat{\nabla} v \otimes e_3)}}\\
    &  & \quad + h^2 \theta \underset{F}{\underbrace{\big( \hat{\nabla} u -
    x_3 \theta^{- 1 / 2}  \hat{\nabla}^2 v + \partial_3 d \otimes e_3
    \big)}} +\mathcal{O} (h^3) .
  \end{eqnarray*}
  For later use we note here the product:
  \begin{eqnarray*}
    \nabla_h^{\top} y^h \nabla_h y^h & = & \big( I + h \sqrt{\theta} E^{\top}
    + h^2 \theta F^{\top} \big)  \big( I + h \sqrt{\theta} E + h^2 \theta F
    \big) +\mathcal{O} (h^3)\\
    & = & I + \underset{= 0}{\underbrace{h \sqrt{\theta} 2 E_s}} + h^2 \theta
    \underset{N}{\underbrace{(2 F_s + E^{\top} E)}} +\mathcal{O} (h^3)
  \end{eqnarray*}
  where we used that $E$ is antisymmetric. For any matrix $M$ with positive
  determinant we have the polar decomposition $M = U \sqrt{M^{\top} M} = U
  \sqrt{I + P}$, with $U \in \tmop{SO} (3)$ and $P = M^{\top} M - I$. By the
  frame invariance of the energy and a Taylor expansion around the identity of
  the square root
  \begin{eqnarray*}
    W_0 (x_3, M) & = & W_0 (x_3, \sqrt{M^{\top} M})\\
    & = & W_0 \left( x_3, I + 1 / 2  (M^{\top} M - I) + o (| M^{\top} M - I
    |) \right),
  \end{eqnarray*}
  and, assuming that a Taylor expansion of $W_0$ around the identity can be
  carried, i.e. that $M$ is close enough to SO(3), this is equal to:
  \[ \frac{1}{2} Q_3 \left( x_3, 1 / 2  (M^{\top} M - I) \right) + o (|
     M^{\top} M - I |^2) . \]
  In view of the definition of $W_0$, we set
  \[ M^h \assign \nabla_h y^h  (I + h^2  \tilde{B}^h), \]
  where $\tilde{B}^h = \sqrt{\theta} B^h \rightarrow \tilde{B} = \sqrt{\theta}
  B$ in $L^{\infty}$. Then
  \begin{eqnarray*}
    (M^h)^{\top} M^h & \assign & [\nabla_h y^h  (I + h^2  \tilde{B}^h)]^{\top}
    [\nabla_h y^h  (I + h^2  \tilde{B}^h)]\\
    & = & (I + h^2  (\tilde{B}^h)^{\top}) \nabla_h^{\top} y^h \nabla_h y^h 
    (I + h^2  \tilde{B}^h) .\\
    & = & (I + h^2  (\tilde{B}^h)^{\top})  (I + h^2 \theta N)  (I + h^2 
    \tilde{B}^h) +\mathcal{O} (h^3)\\
    & = & I + h^2 \theta N + h^2 2 \tilde{B}_s^h +\mathcal{O} (h^3)\\
    & = & I + h^2 \theta N + h^2 2 \tilde{B}_s + o (h^2) .
  \end{eqnarray*}
  To compute the first term in $h^2$, $N = 2 F_s + E^{\top} E$, we have
  \[ 2 F_s = 2 \left( \hat{\nabla}_s u - x_3 \theta^{- 1 / 2}  \hat{\nabla}^2
     v + (\partial_3 d \otimes e_3)_s \right), \]
  and:\footnote{We use the identities $(c \otimes e_3)  (e_3 \otimes c) = c
  \otimes c$, $(c \otimes e_3)  (c \otimes e_3) = c_3 c \otimes e_3$, $(e_3
  \otimes c)  (e_3 \otimes c) = c_3 e_3 \otimes c$ and $(e_3 \otimes c)  (c
  \otimes e_3) = | c |^2 e_3 \otimes e_3$.}
  \begin{eqnarray*}
    E^{\top} E & = & (\hat{\nabla} v \otimes e_3 - e_3 \otimes \hat{\nabla} v)
    (e_3 \otimes \hat{\nabla} v - \hat{\nabla} v \otimes e_3)\\
    & = & \hat{\nabla} v \otimes \hat{\nabla} v + | \hat{\nabla} v |^2 e_3
    \otimes e_3 .
  \end{eqnarray*}
  Since these quantities are independent of $h$, for sufficiently small $h$
  the product $(M^h)^{\top} M^h$ does lie close enough to $\tmop{SO} (3)$ and
  we can perform the desired Taylor expansion:
  \begin{eqnarray*}
    W^h (x_3, \nabla_h y^h) & = & W_0 (x_3, \nabla_h y^h  (I + h^2 
    \tilde{B}^h))\\
    & = & W_0 \left( x_3, ((M^h)^{\top} M^h)^{1 / 2} \right)\\
    & = & \frac{1}{2} Q_3 \left( x_3, \tfrac{1}{2}  [(M^h)^{\top} M^h - I]
    \right) + o (| (M^h)^{\top} M^h - I |^2) .
  \end{eqnarray*}
  Define now $\hat{G}_0 \assign \theta \big( \hat{\nabla}_s u + 1 / 2 
  \hat{\nabla} v \otimes \hat{\nabla} v \big)$, $\hat{G}_1 \assign -
  \theta^{1 / 2}  \hat{\nabla}^2 v$ as in Lemma
  \ref{lem:identification-limiting-strain}. Bringing the previous computations
  together we obtain:
  \begin{eqnarray*}
    \frac{1}{2}  [(M^h)^{\top} M^h - I] & = & h^2  [\hat{G}_0 - x_3  \hat{G}_1
    + \widehat{\check{\tilde{B}}}_s \nobracket\\
    &  & \left. \phantom{\check{\tilde{B}}_s} +
    \smash{\underbrace{\sqrt{\theta} (B (t)_{\cdot 3} \otimes e_3)_s +
    \tfrac{\theta}{2}  | \hat{\nabla} v |^2 e_3 \otimes e_3 + \theta
    (\partial_3 d \otimes e_3)_s}_H} \right]\\
    &  & \phantom{\check{\tilde{B}}_s} + o (h^2),
  \end{eqnarray*}
  hence
  \begin{align*}
    & \applicationspace{3 \tmop{em}} \frac{1}{h^4}  \left[ Q_3 \left( x_3, 1 / 2
    ((M^h)^{\top} M^h - I) \right) + o (| (M^h)^{\top} M^h - I |^2) \right] \\
    &\qquad =~ Q_3 \left( x_3, \hat{G}_0 - x_3  \hat{G}_1 + \sqrt{\theta} 
    \widehat{\check{B}}_s + H \right) + o (1). \applicationspace{3 \tmop{em}}
  \end{align*}
  We now choose the vector field $d$ to cancel one term and attain the minimum
  for the others by solving for $\partial_3 d$ in:
  \[ H \overset{!}{=} \left( \mathcal{L} \left( x_3, G_0 - x_3 G_1 +
     \sqrt{\theta}  \check{B}_s (x_3) \right) \otimes e_3 \right)_s, \]
  that is:
  \[ \theta^{- 1 / 2} B (t)_{\cdot 3} + \tfrac{1}{2} | \hat{\nabla} v |^2 e_3
     + \partial_3 d (x', x_3) = \tfrac{1}{\theta} \mathcal{L} \left( x_3, G_0
     - x_3 G_1 + \sqrt{\theta}  \check{B}_s (x_3) \right) . \]
  Consequently, we set:
  \begin{eqnarray*}
    d (x', x_3) & \assign & - \frac{1}{2}  | \hat{\nabla} v |^2 x_3 e_3\\
    &  & + \frac{1}{\theta}  \int_0^{x_3} \mathcal{L} \left( t, G_0 - tG_1 +
    \sqrt{\theta}  \check{B}_s (t) \right) - \sqrt{\theta} B (t)_{\cdot 3}
    \mathd t,
  \end{eqnarray*}
  and we obtain
  \[ Q_3 \left( x_3, \hat{G}_0 - x_3  \hat{G}_1 + \sqrt{\theta} B_s (x_3) + H
     \right) = Q_2 \left( x_3, G_0 - x_3 G_1 + \sqrt{\theta}  \check{B}_s
     (x_3) \right) . \]
  As in the proof of Theorem \ref{thm:upper-bound-lki}, 
  \eqref{eq:mapping-L} and \eqref{eq:Q2-L-bounds} imply that 
  $d \in W^{1, \infty} (\Omega_1 ; \mathbb{R}^3)$. 
  }}
  \smallskip 
  
  {\step{2: Convergence}{By the previous step we have $\frac{1}{\theta h^4} W_0
  (x_3, \nabla_h y^h) \rightarrow \frac{1}{2 \theta} Q_2 \left( x_3, G_0 + x_3
  G_1 + \sqrt{\theta}  \check{B}_s \right)$ a.e. as $h \rightarrow 0$, and the
  sequence is uniformly bounded so we can integrate over the domain and pass
  to the limit:
  \begin{eqnarray*}
    \frac{1}{\theta h^4}  \int_{\Omega_1} W^h (x_3, \nabla_h y^h) &
    \rightarrow & \frac{1}{2 \theta}  \int_{\Omega_1} Q_2 \left( x_3, G_0 -
    x_3 G_1 + \sqrt{\theta}  \check{B}_s \right)\\
    & = & \frac{1}{2}  \int_{\omega} \overline{Q}_2 (\theta^{1 / 2} (\grs u +
    \tfrac{1}{2} \nabla v \otimes \nabla v), - \nabla^2 v) .
  \end{eqnarray*}}}
  
  {\step{3: Convergence of the recovery sequence}{Note that 
  $P^h_{\theta} (y) \rightarrow(u, v)$ in $X_{\alpha}$ as $h \rightarrow 0$  
  with the choice $R^h = I \in \tmop{SO}(3), c^h = 0 \in \mathbb{R}^3$ in 
  Definition \ref{def:ph-maps} since 
  \begin{eqnarray*}
    \frac{1}{\theta h^2}  \int_{- 1 / 2}^{1 / 2} (y^h (\cdot, x_3)
    - x') \mathd x_3 
    & {\longrightarrow} & u \quad \text{in } W^{1,2} (\omega; \mathbb{R}^2), \\ 
    \frac{1}{\sqrt{\theta} h}  \int_{- 1 / 2}^{1 / 2} y^h_3
    (\cdot, x_3) \mathd x_3 
    & {\longrightarrow} & v \quad \text{in } W^{1,2} (\omega; \mathbb{R}). 
  \end{eqnarray*}
  }}
\end{proof}

In the next result, there is a departure from the analogous functional in
{\cite{friesecke_hierarchy_2006}} beyond the dependence on the out-of-plane
component $x_3$. In the preceding cases, if one sets $Q_2 (t, A) \equiv Q_2
(A)$, and $B \equiv 0$ then the same functionals are obtained as in that work.
However, in the regime $\alpha > 3$ their limit has no membrane term, but we
have $\overline{Q}_2 \left( \grs u, - \nabla^2 v \right) = \frac{1}{2}  \int
Q_2 \left( \grs u \right) + \frac{1}{24}  \int Q_2 (\nabla^2 v)$,
{\tmem{with}} the membrane term. The reason is that
{\cite{friesecke_hierarchy_2006}} discard the in-plane displacements by
minimising them away. In their proofs, they drop the first term in the lower
bound and build the recovery sequence with no $u$ term in $h^{\alpha - 1}$.

Note that it is by keeping the membrane term that our model is able to take
into account and respond to the pre-stressing (internal misfit) $B^h$, e.g.
compressive or tensile stresses in wafers.

\begin{theorem}[Upper bound, linearised von Kármán regime]
  \label{thm:upper-bound-lvk}Let $\alpha > 3$ and consider displacements $(u,
  v) \in X_{\alpha} \assign W^{1, 2} (\omega ; \mathbb{R}^2) \times W^{1, 2}
  (\omega ; \mathbb{R})$. There exists a sequence $(y^h)_{h > 0} \subset Y$
  which $P^h_{\alpha}$-converges to $(u, v)$ such that
  \[ \underset{h \rightarrow 0}{\lim} \mathcal{I}_{\alpha}^h (y^h)
     =\mathcal{I}_{\rm lvK} (u, v)^{}, \]
  with $\mathcal{I}_{\rm lvK}$ defined as in \eqref{eq:energy-lvk} by
  \[ \mathcal{I}_{\rm lvK} (u, v) \assign \frac{1}{2} 
     \int_{\omega} \overline{Q}_2 \left( \grs u, - \nabla^2 v \right) \mathd
     x' \]
  on $X_{\alpha}^0$ and by $+ \infty$ elsewhere.
\end{theorem}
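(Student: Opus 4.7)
The plan is to adapt the recovery-sequence construction of Theorem~\ref{thm:upper-bound-vk} to the simpler regime $\alpha > 3$, exploiting a decisive decoupling: at this scaling, the nonlinear membrane interaction $\tfrac{1}{2}\nabla v \otimes \nabla v$ that appears in the von Kármán case becomes of strictly higher order and disappears, leaving only the linearised strain $\grs u$. As in the preceding theorems, by continuity of $\mathcal{I}_{\rm lvK}$ on $X^0_\alpha$ in the strong $W^{1,2} \times W^{2,2}$ topology and a standard diagonal argument, it suffices to produce the recovery sequence for smooth $(u,v) \in C^\infty(\overline{\omega}; \mathbb{R}^2) \times C^\infty(\overline{\omega}; \mathbb{R})$, taking $R^h = I$ and $c^h = 0$ in the $P^h_\alpha$-maps; for $v \notin W^{2,2}$ the functional is infinite and there is nothing to do.

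First I would take the ansatz dictated by the scalings in \eqref{def:scaled-in-out-of-plane-displacements}:
\begin{equation*}
y^h(x', x_3) \assign \bigl(x', h x_3\bigr)^\top + \bigl(h^{\alpha-1} u(x'), h^{\alpha-2} v(x')\bigr)^\top - h^{\alpha-1} x_3 \bigl(\nabla v(x'), 0\bigr)^\top + h^\alpha d(x', x_3),
\end{equation*}
with corrector $d \in W^{1,\infty}(\Omega_1; \mathbb{R}^3)$ to be chosen. Differentiating yields $\nabla_h y^h = I + h^{\alpha-2} E + h^{\alpha-1} F + O(h^\alpha)$, with antisymmetric $E = e_3 \otimes \hat{\nabla} v - \hat{\nabla} v \otimes e_3$ and $F = \hat{\nabla} u - x_3 \hat{\nabla}^2 v + \partial_3 d \otimes e_3$. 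The key algebraic observation is that $2(\alpha - 2) > \alpha - 1$ since $\alpha > 3$, so the term $E^\top E$ in the Cauchy--Green tensor enters at a strictly higher order than $F_s$; this is exactly why the quadratic $\tfrac{1}{2}\nabla v \otimes \nabla v$ contribution present in Theorem~\ref{thm:upper-bound-vk} is absent here. Setting $M^h \assign \nabla_h y^h (I + h^{\alpha-1} B^h)$ and using $E_s = 0$, one obtains
\begin{equation*}
\tfrac{1}{2}\bigl((M^h)^\top M^h - I\bigr) = h^{\alpha-1}\bigl[\hat{G}_0 - x_3 \hat{G}_1 + \widehat{\check{B}}_s(x_3) + \bigl(B(x_3)_{\cdot 3} \otimes e_3\bigr)_s + \bigl(\partial_3 d \otimes e_3\bigr)_s\bigr] + o(h^{\alpha-1}),
\end{equation*}
with $G_0 = \grs u$ and $G_1 = -\nabla^2 v$.

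Next, I would invoke the polar decomposition and frame invariance exactly as in Theorem~\ref{thm:upper-bound-vk} to write $W_0(x_3, M^h) = \tfrac{1}{2} Q_3(x_3, \tfrac{1}{2}((M^h)^\top M^h - I)) + o(h^{2\alpha-2})$; this is legitimate because $\nabla_h y^h - I = O(h^{\alpha-2}) \to 0$ uniformly on $\Omega_1$, so we remain in the $C^2$-neighbourhood of $\mathrm{SO}(3)$. To reduce $Q_3$ to $Q_2$, the corrector $d$ is chosen to realise the minimum attained by the map $\mathcal{L}$ of \eqref{eq:mapping-L}, explicitly
\begin{equation*}
d(x', x_3) \assign \int_0^{x_3} \bigl[\mathcal{L}\bigl(t, G_0(x') - t G_1(x') + \check{B}_s(t)\bigr) - B(t)_{\cdot 3}\bigr] \mathd t,
\end{equation*}
which, by the uniform linear bound \eqref{eq:Q2-L-bounds} on $\mathcal{L}$ and smoothness of $u, v$, lies in $W^{1,\infty}(\Omega_1; \mathbb{R}^3)$. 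After dividing by $h^{2\alpha-2}$, the integrand converges a.e.\ and is uniformly dominated, so dominated convergence yields
\begin{equation*}
\mathcal{I}^h_\alpha(y^h) \longrightarrow \tfrac{1}{2}\int_{\Omega_1} Q_2\bigl(x_3, \grs u - x_3 \nabla^2 v + \check{B}_s(x_3)\bigr) \mathd x = \tfrac{1}{2}\int_\omega \overline{Q}_2(\grs u, -\nabla^2 v) \mathd x',
\end{equation*}
while $P^h_\alpha(y^h) \to (u, v)$ strongly in $W^{1,2}(\omega;\mathbb{R}^2) \times W^{1,2}(\omega)$ follows immediately from the ansatz with $R^h = I$, $c^h = 0$.

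The one point requiring genuine care is the bookkeeping of orders in the expansion of $(M^h)^\top M^h$: one must verify that the $(B^h)^\top B^h$-type cross terms, the $\nabla' d$-contributions (which enter at $h^\alpha$ rather than $h^{\alpha-1}$), and above all $E^\top E$ (at order $h^{2(\alpha-2)}$) are all genuinely $o(h^{\alpha-1})$. This uses both the strict inequality $\alpha > 3$ and the $W^{1,\infty}$-bound on $d$; beyond this, the argument is a direct and in fact simpler transcription of the $\alpha = 3$ case.
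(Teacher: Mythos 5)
Your proposal is correct and follows essentially the same path as the paper's proof: identical ansatz for $y^h$, the same expansion $\nabla_h y^h = I + h^{\alpha-2}E + h^{\alpha-1}F + \mathcal{O}(h^\alpha)$ with the key observations $E_s = 0$ and $2(\alpha-2) > \alpha-1$, the same corrector $d$ built from $\mathcal{L}$, and dominated convergence. One small slip: you write ``$\hat{G}_0 - x_3\hat{G}_1$'' with $G_1 = -\nabla^2 v$, which would give $\grs u + x_3\nabla^2 v$ rather than the desired $\grs u - x_3\nabla^2 v$; you should use $\hat{G}_0 + x_3\hat{G}_1$ (matching $\check{G} = G_0 + x_3 G_1$ from Lemma~\ref{lem:identification-limiting-strain}), as your final displayed limit already correctly does.
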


\begin{proof}
  We follow closely the notation and path of proof of Theorem
  \ref{thm:upper-bound-vk}. By a standard density argument it is enough to
  consider $(u, v) \in X_{\alpha} \cap C^{\infty} (\overline{\omega})$. Define
  \[ y^h (x', x_3) \assign \left(\begin{array}{c}
       x'\\
       hx_3
     \end{array}\right) + \left(\begin{array}{c}
       h^{\alpha - 1} u (x')\\
       h^{\alpha - 2} v (x')
     \end{array}\right) - h^{\alpha - 1} x_3  \left(\begin{array}{c}
       \nabla v (x')\\
       0
     \end{array}\right) + h^{\alpha} d (x', x_3), \]
  with $d \in W^{1, \infty} (\Omega_1 ; \mathbb{R}^3)$. Then
  \begin{eqnarray*}
    \nabla_h y^h & = & I + h^{\alpha - 2}  \underbrace{\smash{(e_3 \otimes
    \hat{\nabla} v - \hat{\nabla} v \otimes e_3)}}_{= : E} + h^{\alpha - 1} 
    \underbrace{\smash{(\hat{\nabla} u - x_3  \hat{\nabla}^2 v + \partial_3 d
    \otimes e_3)}}_{= : F}\\
    &  & \applicationspace{1 \tmop{em}} +\mathcal{O} (h^{\alpha}),
  \end{eqnarray*}
  and, using that $E_s = 0$:
  \begin{eqnarray*}
    \nabla_h^{\top} y^h \nabla_h y^h & = & (I + h^{\alpha - 2} E^{\top} +
    h^{\alpha - 1} F^{\top})  (I + h^{\alpha - 2} E + h^{\alpha - 1} F)
    +\mathcal{O} (h^{\alpha})\\
    & = & I + 2 h^{\alpha - 1} F_s + o (h^{\alpha - 1}) .
  \end{eqnarray*}
  Define now $M^h \assign \nabla_h y^h  (I + h^{\alpha - 1} B^h)$. A few
  computations lead to
  \[ \tfrac{1}{2}  [(M^h)^{\top} M^h - I] = h^{\alpha - 1}  (F_s + B_s) + o
     (h^{\alpha - 1}), \]
  from which follows, after a Taylor approximation (recall from the proof of
  Theorem \ref{thm:upper-bound-vk}, that this can be done for sufficiently
  small $h$):
  \begin{eqnarray*}
    \frac{1}{h^{2 \alpha - 2}} W^h (x_3, \nabla_h y^h) & = & \dfrac{1}{2 h^{2
    \alpha - 2}}  [Q_3 (x_3, [(M^h)^{\top} M^h - I] / 2) \nobracket\\
    &  & \left. \applicationspace{1 \tmop{em}} + o (| (M^h)^{\top} M^h - I
    |^2) \right]\\
    & = & \dfrac{1}{2} Q_3 (x_3, F_s + B_s) + o (1) .
  \end{eqnarray*}
  Picking $d$ such that:
  \[ ((B (x_3)_{\cdot 3} + \partial_3 d) \otimes e_3)_s = (\mathcal{L} (x_3,
     \nabla u - x_3 \nabla^2 v + \check{B}_s (x_3)) \otimes e_3)_s, \]
  e.g.
  \[ d (x', x_3) \assign \int_0^{x_3} \mathcal{L} \left( t, \grs u - t
     \nabla^2 v + \check{B}_s (t) \right) - B (t)_{\cdot 3} \mathd t, \]
  the term with $\mathcal{L}$ in $Q_2$ cancels out and we obtain
  \[ Q_3 (x_3, F_s + B_s) = Q_2 \left( x_3, \grs u - x_3 \nabla^2 v +
     \check{B}_s (x_3) \right) . \]
  Note that as proved in Theorem \ref{thm:upper-bound-vk}, the properties of
  $\mathcal{L}$ imply that the function $d \in W^{1, \infty} (\Omega_1 ;
  \mathbb{R}^3)$ so the previous computations are justified. We have therefore
  \[ \frac{1}{h^{2 \alpha - 2}} W_0 (x_3, \nabla_h y^h) \rightarrow
     \cfrac{1}{2} Q_2 \left( x_3, \grs u - x_3 \nabla^2 v + \check{B}_s (x_3)
     \right) \text{ a.e. in } \omega, \]
  and also $Q_2 (x_3, A) \lesssim | A |^2$ because $Q_3$ is in $L^{\infty}$
  (Assumption \ref{main-assumptions}.\ref{assumption:Q3-ess-bounded}). Because
  $u_i, v \in C^{\infty} (\overline{\omega})$ and $B_s \in L^{\infty}$, all
  arguments of $Q_2$ are uniformly bounded and we can apply dominated
  convergence to conclude:
  \begin{eqnarray*}
    \frac{1}{h^{2 \alpha - 2}}  \int_{\Omega_1} W_0 (x_3, \nabla_h y^h) &
    \underset{h \downarrow 0}{\longrightarrow} & \frac{1}{2}  \int_{\Omega_1}
    Q_2 \left( x_3, \grs u - x_3 \nabla^2 v + \check{B}_s (x_3) \right) \mathd
    x\\
    & = & \frac{1}{2}  \int_{\omega} \overline{Q}_2 \left( \grs u, - \nabla^2
    v \right) \mathd x' .
  \end{eqnarray*}
  Set now \ $R = I \in \tmop{SO} (3), c = 0 \in \mathbb{R}^3$ for the rigid
  transformation $\rho$ in Definition \ref{def:ph-maps}. It remains to note 
  that indeed $P^h_{\alpha} (y^h) \rightarrow (u, v)$ in $X_{\alpha}$: 
  \begin{eqnarray*}
  \frac{1}{h^{\alpha - 2}}  \int_{- 1 / 2}^{1 / 2} y^h_3
     (\cdot, x_3) \mathd x_3 
  & {\longrightarrow} & v \quad \text{in } W^{1, 2}(\omega; \mathbb{R}), \\ 
  \frac{1}{h^{\alpha - 1}}  \int_{- 1 / 2}^{1 / 2}
     (y^{h\prime} (\cdot, x_3) - x') \mathd x_3 
  & {\longrightarrow} & u  \quad \text{in } W^{1, 2}(\omega; \mathbb{R}^2), 
  \end{eqnarray*}
  and the proof is complete.
\end{proof}

%-----------------------------------------------------------------------------------------
%-----------------------------------------------------------------------------------------
%-----------------------------------------------------------------------------------------
\section{$\Gamma$-convergence of the interpolating
theory}\label{sec:gamma-convergence-interpolation}
%-----------------------------------------------------------------------------------------

\begin{notation*}
  Throughout this section we write $A_{\theta} \assign \grs u_{\theta} +
  \tfrac{1}{2} \nabla v_{\theta} \otimes \nabla v_{\theta}$ for the strain
  induced by a pair of displacements $(u_{\theta}, v_{\theta})$. As before,
  $\theta > 0$.
\end{notation*}

We now set to prove Theorem \ref{thm:gamma-interpolating}, which states that
the functional of generalised von Kármán type that we found in the preceding
section,
\[ \mathcal{I}^{\theta}_{\rm vK} (u_{\theta}, v_{\theta}) \assign
   \frac{1}{2}  \int_{\omega} \int_{- 1 / 2}^{1 / 2} Q_2 \left( x_3,
   \sqrt{\theta} A_{\theta} - x_3 \nabla^2 v_{\theta} + \check{B} (x_3)
   \right) \mathd x_3 \mathd x', \]
interpolates between the two adjacent regimes as $\theta \rightarrow \infty$
or $\theta \rightarrow 0$. As $\theta$ approaches infinity, we expect the
optimal energy configurations to approach those of the linearised Kirchhoff
model, whereas with $\theta$ tending to zero they should approach the
linearised von Kármán model.

For this section we restrict ourselves to spaces where Korn-Poincaré
type inequalities hold.

\begin{definition}
  \label{def:xu-xv-spaces}Let
  \[ X_u \assign \left\{ u \in W^{1, 2} (\omega ; \mathbb{R}^2) :
     \int_{\omega} \gra u = 0 \text{ and } \int_{\omega} u = 0 \right\}, \]
  and
  \[ X_v \assign \left\{ v \in W^{2, 2} (\omega ; \mathbb{R}) : \int_{\omega}
     \nabla v = 0 \text{ and } \int_{\omega} v = 0 \right\} . \]
  We set $X_w \assign X_u \times X_v$ with the weak topologies.
\end{definition}

Additionally, from now on we assume without loss that the barycenter of 
$\omega$ be the origin so that $\int_{\omega} x' \mathd x' = 0$. Finally, 
for the limit $\theta \rightarrow \infty$ we require that {\em $\omega$ be 
convex} and recall the definition of the space of maps with singular Hessian
\[ W^{2, 2}_{s \nospace h} (\omega) \assign \left\{ v \in W^{2, 2} (\omega ;
   \mathbb{R}) : \det \nabla^2 v = 0 \text{ a.e.} \right\} . \]
\begin{remark}
  \label{rem:xu-xv-spaces-energy-same}There is no loss of generality in 
  reducing to the space $X_u \times X_v$: First we can always add an 
  infinitesimal rigid motion to $u$ and and any affine function to $v$ 
  without changing $\grs u$ or $\nabla^2 v$. Second, although the nonlinear 
  term $\nabla v \otimes \nabla v$ {\em does} change after adding an affine function, the extra terms
  appearing happen to be a symmetric gradient which can be absorbed into $\grs
  u$ with a little help: For any $g (x) = a \cdot x + b$ for $a, b \in
  \mathbb{R}^2$, we have
  \begin{eqnarray}
    \nabla (v + g) \otimes \nabla (v + g) & = & \nabla v \otimes \nabla v + a
    \otimes a + a \otimes \nabla v + \nabla v \otimes a \nonumber\\
    & = & \nabla v \otimes \nabla v + \grs z 
    \label{eq:shifted-v-nonlinear-term}
  \end{eqnarray}
  where we set $z (x) \assign (2 v (x) + a \cdot x) a \in W^{2, 2} (\omega ;
  \mathbb{R}^2)$. Therefore, for any fixed $u \in W^{1, 2} (\omega ;
  \mathbb{R}^2), v \in W^{2, 2} (\omega)$ one can choose $g (x) = - [(\nabla
  v)_{\omega} \cdot x + (v)_{\omega}]$ and define
  \[ \tilde{u} = u + z + r, \applicationspace{1 \tmop{em}} \tilde{v} = v + g,
  \]
  with $r (x) = Rx + c$, for constants $R \assign \frac{- 1}{| \omega |} 
  \int_{\omega} \nabla_a u + \nabla_a z \mathd x \in \mathbb{R}^{2 \times
  2}_{\tmop{ant}}$ and $c \assign \frac{- 1}{| \omega |}  \int_{\omega} u (x)
  + z (x) + Rx \mathd x$. For $\tilde{u}, \tilde{v}$ we then have on the one
  hand $\int \tilde{u} = 0, \int \nabla_a  \tilde{u} = 0$ and $\int \tilde{v}
  = 0, \int \nabla \tilde{v} = 0$ and on the other (note that $\grs r = 0$):
  \[ I (u, v) = I (\tilde{u} - z - r, \tilde{v} - g)
     \overset{\text{{\eqref{eq:shifted-v-nonlinear-term}}}}{=} I (\tilde{u} -
     r, \tilde{v}) = I (\tilde{u}, \tilde{v}) \]
  as desired.
\end{remark}

Our first theorem identifies the types of convergence required in order to
obtain precompactness of sequences of bounded energy. We use these definitions
of convergence for the computation of the $\Gamma$-limits.

\begin{theorem}[Compactness]
  \label{thm:compactness-interpolating}Let $(u_{\theta}, v_{\theta})_{\theta >
  0}$ be a sequence in $X_w$ with finite energy
  \[ \underset{\theta > 0}{\sup} \mathcal{I}_{\rm vK}^{\theta}
     (u_{\theta}, v_{\theta}) \leqslant C. \]
  Then:
  \begin{enumerate}
    \item The sequence $(v_{\theta})_{\theta \uparrow \infty}$ is weakly
    precompact in $W^{2, 2} (\omega)$ and the weak limit is in $X_v \cap W^{2,
    2}_{s \nospace h} (\omega)$. Additionally $(u_{\theta})_{\theta \uparrow
    \infty}$ is weakly precompact in $W^{1, 2} (\omega ; \mathbb{R}^2)$.
    
    \item The sequence $(\theta^{1 / 2} u_{\theta}, v_{\theta})_{\theta
    \downarrow 0}$ is weakly precompact in $W^{1, 2} (\omega ; \mathbb{R}^2)
    \times W^{2, 2} (\omega)$ and the weak limit is in $X_u \times X_v$.
  \end{enumerate}
\end{theorem}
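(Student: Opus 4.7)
The approach is to extract from the uniform energy bound quantitative $L^2$-control on $\nabla^2 v_{\theta}$ and $\sqrt{\theta}\,A_{\theta}$, and to promote these to Sobolev bounds on $v_{\theta}$ and (in appropriate rescaling) $u_{\theta}$ via Poincar\'e and Korn-Poincar\'e inequalities tailored to the spaces $X_v$ and $X_u$. By~\eqref{eq:Q2-L-bounds} the form $Q_2(t,\cdot)$ satisfies $Q_2(t,F) \gtrsim |F_{\tmop{sym}}|^2$ uniformly in $t$. Writing $\check{B}_s(x_3) = \bar{B}_0 + x_3\,\bar{B}_1 + r(x_3)$ with $r \in L^2(-1/2,1/2)$ orthogonal to $\{1, x_3\}$ and $\bar{B}_0, \bar{B}_1 \in \mathbb{R}^{2\times 2}_{\tmop{sym}}$ uniformly bounded by $\|B\|_{L^\infty}$, integrating first in $x_3$ and using the orthogonality yields
\[ \int_\omega \Bigl( |\sqrt{\theta}\,A_\theta + \bar{B}_0|^2 + \tfrac{1}{12}\,|\nabla^2 v_\theta - \bar{B}_1|^2 \Bigr) \mathd x' \lesssim \mathcal{I}^{\theta}_{\rm vK}(u_\theta, v_\theta) \leq C, \]
so $\|\nabla^2 v_\theta\|_{L^2(\omega)} + \|\sqrt{\theta}\,A_\theta\|_{L^2(\omega)} \leq C$ uniformly in $\theta$.

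Because $v_\theta \in X_v$ has vanishing mean and vanishing mean gradient, Poincar\'e applied twice yields $\|v_\theta\|_{W^{2,2}(\omega)} \leq C$. Along a subsequence, $v_\theta \rightharpoonup v$ weakly in $W^{2,2}$ with $v \in X_v$ by weak closure of the linear constraints, and by Rellich-Kondrachov the convergence is strong in $W^{1,p}$ for every $p < \infty$ (via the 2D embedding $W^{2,2}(\omega) \hookrightarrow W^{1,p}(\omega)$). In particular $\nabla v_\theta \otimes \nabla v_\theta \to \nabla v \otimes \nabla v$ strongly in $L^p$ for all finite $p$.

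For the regime $\theta \uparrow \infty$ one has $\|A_\theta\|_{L^2} \leq C/\sqrt{\theta} \to 0$, so $\grs u_\theta = A_\theta - \tfrac{1}{2}\,\nabla v_\theta \otimes \nabla v_\theta$ is uniformly bounded in $L^2$. The Korn-Poincar\'e inequality on $X_u$ then yields $\|u_\theta\|_{W^{1,2}} \leq C$, and a further subsequence gives $u_\theta \rightharpoonup u \in X_u$. Passing to the limit in $A_\theta \to 0$ in $L^2$ delivers $\grs u + \tfrac{1}{2}\,\nabla v \otimes \nabla v = 0$ a.e.\ on $\omega$. To conclude $v \in W^{2,2}_{s h}$, one invokes the identity $\tmop{curl}\tmop{curl}(\nabla v \otimes \nabla v) = -2\det \nabla^2 v$, valid distributionally in $W^{2,2}$, together with $\tmop{curl}\tmop{curl}(\grs u) = 0$: applying $\tmop{curl}\tmop{curl}$ to the compatibility identity forces $\det \nabla^2 v = 0$ a.e.

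For the regime $\theta \downarrow 0$, set $\tilde{u}_\theta \assign \sqrt{\theta}\,u_\theta \in X_u$ and write $\grs \tilde{u}_\theta = \sqrt{\theta}\,A_\theta - \tfrac{\sqrt{\theta}}{2}\,\nabla v_\theta \otimes \nabla v_\theta$. The first summand is bounded in $L^2$ by the initial bound, while the second tends to $0$ in $L^2$ since $\nabla v_\theta$ is uniformly bounded in $L^4$. Korn-Poincar\'e on $X_u$ again delivers $\|\tilde{u}_\theta\|_{W^{1,2}} \leq C$ and a weak limit in $X_u$ along a subsequence, yielding the second assertion. The main obstacle is the $\theta \uparrow \infty$ case and the rigorous identification $\det \nabla^2 v = 0$; all remaining ingredients are the three pillars of Korn-Poincar\'e on $X_u$, double Poincar\'e on $X_v$, and the compactness of $\nabla v_\theta$ in $L^p$.
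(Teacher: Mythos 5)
Your argument is correct and reaches all the required conclusions; it differs from the paper in two places, both of which are reasonable alternatives. First, where the paper splits the $x_3$-integral into the halves $(0,1/2)$ and $(-1/2,0)$ and applies Jensen's inequality on each piece to separate $\|\sqrt\theta A_\theta\|$ from $\|\nabla^2 v_\theta\|$, you instead exploit the $L^2(-\tfrac12,\tfrac12)$-orthogonality of $\{1,x_3,r\}$ (with $r$ the component of $\check B_{\rm sym}$ orthogonal to the first two). Expanding the square under this orthogonality gives the cleaner bound
\[ \int_\omega \bigl(\bigl|\sqrt\theta A_\theta+\bar B_0\bigr|^2 + \tfrac1{12}\bigl|\nabla^2 v_\theta-\bar B_1\bigr|^2\bigr) \,\mathd x' \lesssim C \]
directly, with explicit constants; both routes yield the same uniform bounds \eqref{eq:thm:compactness-interpolating-1}. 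Second, to obtain $\det\nabla^2 v=0$ from the limiting compatibility $\grs u+\tfrac12\nabla v\otimes\nabla v=0$, the paper cites {\cite[Proposition 9]{friesecke_hierarchy_2006}} (which requires $\omega$ convex), whereas you apply $\operatorname{curl}\operatorname{curl}$ distributionally and use the identity $\operatorname{curl}\operatorname{curl}(\nabla v\otimes\nabla v)=-2\det\nabla^2 v$ together with $\operatorname{curl}\operatorname{curl}\grs u=0$. This is indeed the computational core of that proposition; your direction of the equivalence (existence of $u$ implies singular Hessian) is a pure local identity and does not actually need convexity or simple connectedness, so this step of your argument is in fact slightly more general. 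One should only take care that $\det\nabla^2 v\in L^1$ and $\nabla v\otimes\nabla v$ have enough regularity for the distributional identity, which holds since $v\in W^{2,2}(\omega)$ with $\omega\subset\mathbb R^2$; you note this correctly. The remaining ingredients (Poincar\'e twice in $X_v$, Korn--Poincar\'e in $X_u$, the compact embedding $W^{1,2}\hookrightarrow L^4$ and the resulting strong $L^2$ convergence of $\nabla v_\theta\otimes\nabla v_\theta$, weak closedness of the constraint spaces) coincide with the paper's.
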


\begin{proof}
  By assumption:
  \[ C \geqslant \int_{\omega} \int_{- 1 / 2}^{1 / 2} Q_2 \left( x_3,
     \sqrt{\theta} A_{\theta} - x_3 \nabla^2 v_{\theta} + \check{B} (x_3)
     \right) \mathd x_3 \mathd x', \]
  and the uniform lower bound on $Q_2$ in \eqref{eq:Q2-L-bounds} yields 
  \[ Q_2 (x_3, F) \gtrsim | F |^2 \text{ for all symmetric } F \text{ and }
     x_3 \in \left( - 1 / 2, 1 / 2 \right), \]
  so that $\int_{- 1 / 2}^{1 / 2} Q_2 (x_3, F (x_3)) \gtrsim \int_{- 1 / 2}^{1
  / 2} | F (x_3) |^2$. Now split the inner integral in half, and normalise to
  use Jensen's inequality. In the upper half:
  \begin{eqnarray*}
    C & \geqslant & \int_{\omega} 2 \int_0^{1 / 2} Q_2 \left( x_3,
    \sqrt{\theta} A_{\theta} - x_3 \nabla^2 v_{\theta} + \check{B}_s (x_3)
    \right) \mathd x_3 \mathd x'\\
    & \gtrsim & \int_{\omega} 2 \int_0^{1 / 2} \left| \sqrt{\theta}
    A_{\theta} - x_3 \nabla^2 v_{\theta} + \check{B}_s (x_3) \right|^2 \mathd
    x_3 \mathd x'\\
    & \gtrsim & \int_{\omega} \left| 2 \int_0^{1 / 2} \sqrt{\theta}
    A_{\theta} - x_3 \nabla^2 v_{\theta} + \check{B}_s (x_3) \mathd x_3
    \right|^2 \mathd x'\\
    & = & \int_{\omega} \left| \sqrt{\theta} A_{\theta} - \tfrac{1}{4}
    \nabla^2 v_{\theta} + c \right|^2 \mathd x'\\
    & \gtrsim & \left\| \sqrt{\theta} A_{\theta} - \tfrac{1}{4} \nabla^2
    v_{\theta} \right\|_{0, 2}^2 - c^2  | \omega | .
  \end{eqnarray*}
  An analogous computation for the lower half of the interval results in
  \[ C \geqslant \left\| \sqrt{\theta} A_{\theta} + \tfrac{1}{4} \nabla^2
     v_{\theta} \right\|_{0, 2} \]
  and bringing both bounds together we obtain:
  \begin{equation}
    \label{eq:thm:compactness-interpolating-1} \left\| \sqrt{\theta}
    A_{\theta} \right\|_{0, 2} \leqslant C \text{\quad and\quad} \| \nabla^2
    v_{\theta} \|_{0, 2} \leqslant C.
  \end{equation}
  Two applications of Poincaré's inequality to the second bound yield:
  \[ \| v_{\theta} \|_{2, 2} \leqslant C \text{ for all } \theta > 0. \]
  Therefore a subsequence (not relabelled) $v_{\theta} \rightharpoonup v$ for
  some $v \in X_v$. Now consider {\eqref{eq:thm:compactness-interpolating-1}}
  again and observe that with the Sobolev embedding $W^{1, 2} (\omega)
  \hookrightarrow L^4 (\omega)$ we know that
  \[ \| \nabla v_{\theta} \otimes \nabla v_{\theta} \|_{0, 2} = \| \nabla
     v_{\theta} \|^2_{0, 4} \lesssim \| \nabla v_{\theta} \|_{1, 2}^2
     \leqslant \| v_{\theta} \|^2_{2, 2} \leqslant C. \]
  Together with \eqref{eq:thm:compactness-interpolating-1} this implies   
  \begin{equation}
  \label{eq:thm:compactness-interpolating-2}
  \left\|{\sqrt{{\theta}}} {\grs}
  u_\theta \right\|_{0,2}{\leqslant}C+C {\sqrt{{\theta}}},
  \end{equation}
  so, by the Korn-Poincaré inequality, the sequence
  $(u_{\theta})_{\theta > 0}$ is bounded in $W^{1, 2}$ when $\theta
  \rightarrow \infty$ and there exists a subsequence (not relabelled)
  $u_{\theta} \rightharpoonup u$ for some $u \in X_u$.
  
  Now if $z_{\varepsilon} \rightharpoonup z$ in $W^{1, 2} (\omega ;
  \mathbb{R}^2)$, by the compact Sobolev embedding $W^{1, 2} \hookrightarrow
  L^4$ we have $z_{\varepsilon} \rightarrow z$ in $L^4$ and
  \begin{eqnarray*}
    \int_{\omega} | z_{\varepsilon} \otimes z_{\varepsilon} - z \otimes z |^2
    \mathd x \underset{\varepsilon \rightarrow 0}{\longrightarrow} 0.
    %\label{eq:nonlinear-membrane-strain-convergence}
  \end{eqnarray*}
  So $\nabla v_{\theta} \otimes \nabla v_{\theta} \rightarrow \nabla v \otimes
  \nabla v$ in $L^2$ and from {\eqref{eq:thm:compactness-interpolating-1}} and
  lower semicontinuity of the norm we deduce
  \[ \left\| \grs u + \tfrac{1}{2} \nabla v \otimes \nabla v \right\|_{0, 2}
     \leqslant \underset{\theta \rightarrow \infty}{\tmop{linf}}  \|
     A_{\theta} \|_{0, 2} = 0. \]
  By {\cite[Proposition 9]{friesecke_hierarchy_2006}} $v \in W^{2, 2}_{s
  \nospace h} (\omega)$ since $\omega$ is convex, and this concludes the proof 
  of the first statement.
  
  For the second statement we take $\theta \downarrow 0$. It only remains to
  prove precompactness for $u_{\theta}$ since the previous computation for
  $(v_{\theta})_{\theta > 0}$ applies for all $\theta$. But it follows
  directly from {\eqref{eq:thm:compactness-interpolating-2}} above: again with
  the Korn-Poincaré inequality, the sequence $(\theta^{1 / 2}
  u_{\theta})_{\theta > 0}$ is bounded in $W^{1, 2}$, so it contains a weakly
  convergent subsequence $\theta^{1 / 2} u_{\theta} \rightharpoonup u \in X_u$.
\end{proof}

We begin the proof of $\Gamma$-convergence in Theorem
\ref{thm:gamma-interpolating} with the lower and upper bound and a few
technical lemmas for the passage from $\alpha = 3$ to $\alpha < 3$.

\begin{theorem}[Lower bound, von Kármán to linearised Kirchhoff]
  \label{thm:lower-bound-vk-to-lki}Assume $\omega$ is convex and let
  $(u_{\theta}, v_{\theta})_{\theta > 0}$ be a sequence in $X_w$ such that
  $v_{\theta} \rightharpoonup v$ in $X_v$ as $\theta \rightarrow \infty$. Then
  \[ \underset{\theta \uparrow \infty}{\tmop{linf}} \mathcal{I}_{\rm vK}^{\theta} 
  (u_{\theta}, v_{\theta}) \geqslant \mathcal{I}_{\rm lKi} (v) . \]
\end{theorem}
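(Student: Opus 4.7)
The plan is to combine the compactness bound on $\sqrt{\theta} A_\theta$ from Theorem \ref{thm:compactness-interpolating} with weak lower semicontinuity of the quadratic integral and the variational definition of $\overline{Q}_2^\star$ to collapse the $\alpha=3$ energy onto the linearised Kirchhoff one.

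First, a standard reduction: assume $\liminf_{\theta\uparrow\infty} \mathcal{I}^\theta_{\rm vK}(u_\theta, v_\theta) < \infty$ (otherwise there is nothing to prove) and pass to a subsequence (not relabelled) realising the liminf as a limit along which the energies are uniformly bounded. Theorem \ref{thm:compactness-interpolating}(1) then guarantees $v \in W^{2,2}_{sh}(\omega)$, so $\mathcal{I}_{\rm lKi}(v) = \tfrac{1}{2} \int_\omega \overline{Q}_2^\star(-\nabla^2 v)$. Moreover, the bound $\|\sqrt\theta\, A_\theta\|_{0,2,\omega}\le C$ in \eqref{eq:thm:compactness-interpolating-1} allows a further subsequence extraction with
\[
\sqrt{\theta}\, A_\theta \;\rightharpoonup\; E \qquad \text{in } L^2(\omega; \mathbb{R}^{2\times 2}_{\rm sym}),
\]
the limit $E$ being symmetric because each $A_\theta$ is. Together with $\nabla^2 v_\theta \rightharpoonup \nabla^2 v$ in $L^2(\omega)$ and the $x'$-independence of $\check{B}$, this yields
\[
\sqrt{\theta}\, A_\theta - x_3 \nabla^2 v_\theta + \check{B}(x_3) \;\rightharpoonup\; E(x') - x_3 \nabla^2 v(x') + \check{B}(x_3) \quad \text{in } L^2(\Omega_1).
\]

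Next, I would invoke weak lower semicontinuity. Since $Q_2(x_3,\cdot)$ is positive semidefinite with coefficients essentially bounded by Assumption \ref{main-assumptions}.\ref{assumption:Q3-ess-bounded}, the functional $F \mapsto \int_{\Omega_1} Q_2(x_3, F(x)) \, dx$ is convex and continuous, hence weakly lower semicontinuous, on $L^2(\Omega_1)$. Therefore
\[
\liminf_{\theta\uparrow\infty} \mathcal{I}^\theta_{\rm vK}(u_\theta, v_\theta) \;\geq\; \tfrac{1}{2} \int_{\Omega_1} Q_2\bigl(x_3,\, E(x') - x_3 \nabla^2 v(x') + \check{B}(x_3)\bigr) \, dx.
\]
Applying Fubini and the definition \eqref{eq:q2bar} of $\overline{Q}_2^\star$ pointwise in $x'$,
\[
\int_{-1/2}^{1/2} Q_2\bigl(x_3, E(x') - x_3 \nabla^2 v(x') + \check{B}(x_3)\bigr)\, dx_3 \;=\; \overline{Q}_2(E(x'), -\nabla^2 v(x')) \;\geq\; \overline{Q}_2^\star(-\nabla^2 v(x')),
\]
so integration over $\omega$ produces exactly $\mathcal{I}_{\rm lKi}(v)$, completing the argument.

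No step is expected to present a serious obstacle: the crucial $L^2$-boundedness of $\sqrt{\theta}\, A_\theta$ and the membership $v \in W^{2,2}_{sh}$ are both already furnished by Theorem \ref{thm:compactness-interpolating}, and the only remaining ingredient is the standard weak-$L^2$ lower semicontinuity of a convex quadratic integral with bounded coefficients. The defining inequality for $\overline{Q}_2^\star$ then does the remaining bookkeeping for free; in particular, no construction analogous to the recovery-sequence machinery of Section \ref{sec:gamma-convergence-hierarchy} is needed here.
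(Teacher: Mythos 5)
Your argument is correct, but it takes a mildly different route from the paper. The paper first performs the relaxation pointwise in $x'$ and $\theta$: it replaces $\sqrt\theta A_\theta$ by the pointwise minimiser, obtaining
$\mathcal{I}^\theta_{\rm vK}(u_\theta,v_\theta) \ge \tfrac12\int_\omega \overline{Q}_2^\star(-\nabla^2 v_\theta) = \mathcal{I}_{\rm lKi}(v_\theta)$
for every $\theta$, and then passes to the limit using only $\nabla^2 v_\theta \rightharpoonup \nabla^2 v$ and convexity of $\overline{Q}_2^\star$. You instead pass to the limit first, extracting a weak $L^2$ limit $E$ of $\sqrt\theta A_\theta$ from the compactness estimate $\|\sqrt\theta A_\theta\|_{0,2}\le C$, apply weak lower semicontinuity of $F\mapsto\int_{\Omega_1}Q_2(x_3,F)$ to the full argument, and only then minimise over $E$ via the defining inequality for $\overline{Q}_2^\star$. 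Both are sound; the paper's ordering is more economical because it discards the membrane term immediately (so no weak limit $E$ need ever be identified), while yours makes explicit what becomes of $\sqrt\theta A_\theta$ in the limit. One small bookkeeping remark: the bound you cite as \eqref{eq:thm:compactness-interpolating-1} is derived inside the proof of Theorem~\ref{thm:compactness-interpolating} rather than stated in its conclusion, so strictly speaking you are re-using an intermediate estimate rather than the theorem's thesis; this is harmless since the estimate is established under exactly your hypotheses.
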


\begin{proof}
  By Theorem \ref{thm:compactness-interpolating} we only need to consider 
  $v \in X_v^0 \assign X_v \cap W^{2,2}_{s \nospace h} (\omega)$, hence 
  $\mathcal{I}_{\rm lKi}(v) < \infty$. We can minimise the 
  inner integral pointwise and obtain a lower bound:
  \begin{eqnarray*}
    \mathcal{I}_{\rm vK}^{\theta} (u_{\theta}, v_{\theta}) & = &
    \frac{1}{2}  \int_{\omega} \int_{- 1 / 2}^{1 / 2} Q_2 \left( x_3,
    \sqrt{\theta} A_{\theta} - x_3 \nabla^2 v_{\theta} + \check{B} (x_3)
    \right) \mathd x_3 \mathd x'\\
    & \geqslant & \frac{1}{2}  \int_{\omega} \underset{A \in \mathbb{R}^{2
    \times 2}}{\min}  \int_{- 1 / 2}^{1 / 2} Q_2 (x_3, A - x_3 \nabla^2
    v_{\theta} + \check{B} (x_3)) \mathd x_3 \mathd x'\\
    & \overset{}{=} & \mathcal{I}_{\rm lKi} (v_{\theta}) .
  \end{eqnarray*}
  As $\overline{Q}_2^{\star}$ is a convex quadratic form, we have by the
  convergence $\nabla^2 v_{\theta} \rightharpoonup \nabla^2 v$ in $L^2$:
  \[ \underset{\theta \uparrow \infty}{\tmop{linf}} \mathcal{I}_{\rm vK}^{\theta} 
  (u_{\theta}, v_{\theta}) \geqslant \underset{\theta \uparrow
     \infty}{\tmop{linf}} \mathcal{I}_{\rm lKi} (v_{\theta})
     \geqslant \mathcal{I}_{\rm lKi} (v) . \]
\end{proof}

\begin{theorem}[Upper bound, von Kármán to linearised Kirchhoff]
  \label{thm:upper-bound-vk-to-lki}Assume $\omega$ is convex. Set $X^0_v
  \assign X_v \cap W^{2, 2}_{s \nospace h} (\omega)$ and fix some displacement
  $v \in X_v$. There exists a sequence $(u_{\theta}, v_{\theta})_{\theta
  \uparrow \infty} \subset X_w$ such that $v_{\theta} \rightharpoonup v$ in
  $W^{2, 2} (\omega)$ and $\mathcal{I}_{\rm vK}^{\theta} (u_{\theta},
  v_{\theta}) \rightarrow \mathcal{I}_{\rm lKi} (v)$ as
  $\theta \rightarrow \infty$.
\end{theorem}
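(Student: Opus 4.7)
The plan is to adapt the construction of Theorem \ref{thm:upper-bound-lki} to produce, for a fixed $v \in X_v$, a recovery sequence. If $v \notin X_v^0 \assign X_v \cap W^{2,2}_{sh}(\omega)$ then $\mathcal{I}_{\rm lKi}(v) = \infty$ and the trivial choice $(u_\theta, v_\theta) = (0, v)$ satisfies the bound, so assume $v \in X_v^0$. By Theorem \ref{thm:density-singular-hessian} the smooth class $\mathcal{V}_0$ is $W^{2,2}$-dense in $W^{2,2}_{sh}(\omega)$ and $\mathcal{I}_{\rm lKi}$ is strongly continuous there, so a standard diagonal argument reduces matters to $v$ smooth with $\det \nabla^2 v = 0$. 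For such $v$ the compatibility relation $\operatorname{curl}\operatorname{curl}\bigl(-\tfrac{1}{2} \nabla v \otimes \nabla v\bigr) = \det \nabla^2 v = 0$ on the simply connected $\omega$ produces a $u_0 \in W^{1,2}(\omega;\mathbb{R}^2)$ with $\grs u_0 + \tfrac{1}{2} \nabla v \otimes \nabla v = 0$; this $u_0$ will kill the leading $\sqrt{\theta}$ contribution of the membrane term.

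Next, apply Corollary \ref{cor:representation-matrix-minimizer} to $-\nabla^2 v$ to obtain smooth sequences $(\alpha_k)_k \subset C^{\infty}(\overline{\omega})$ and $(g_k)_k \subset C^{\infty}(\overline{\omega};\mathbb{R}^2)$ such that
\[
   \tfrac{1}{2} \int_{\omega}\!\!\int_{-1/2}^{1/2} Q_2\bigl(t,\, \alpha_k \nabla^2 v + \grs g_k - t \nabla^2 v + \check{B}(t)\bigr) \mathd t\, \mathd x' \xrightarrow{k \to \infty} \mathcal{I}_{\rm lKi}(v).
\]
Define the ansatz
\[
   v_{\theta,k} \assign v - \theta^{-1/2} \alpha_k, \qquad u_{\theta,k} \assign u_0 + \theta^{-1/2}\bigl(g_k + \alpha_k \nabla v\bigr).
\]
The identity $\grs(\alpha_k \nabla v) = \alpha_k \nabla^2 v + \tfrac{1}{2}\bigl(\nabla \alpha_k \otimes \nabla v + \nabla v \otimes \nabla \alpha_k\bigr)$ produces precisely the $\theta^{-1/2}$-order cross term appearing in the expansion of $\nabla v_{\theta,k} \otimes \nabla v_{\theta,k}$, but with opposite sign. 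Combined with the cancellation from $u_0$, a direct computation yields
\[
   \sqrt{\theta}\bigl(\grs u_{\theta,k} + \tfrac{1}{2}\nabla v_{\theta,k} \otimes \nabla v_{\theta,k}\bigr) = \grs g_k + \alpha_k \nabla^2 v + \mathcal{O}(\theta^{-1/2}),
\]
while $\nabla^2 v_{\theta,k} \to \nabla^2 v$ uniformly on $\omega$.

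Since $v, \alpha_k, g_k$ are smooth the argument of $Q_2$ is uniformly bounded, so dominated convergence gives, for each fixed $k$,
\[
   \lim_{\theta \to \infty} \mathcal{I}^{\theta}_{\rm vK}(u_{\theta,k}, v_{\theta,k}) = \tfrac{1}{2} \int_{\omega}\!\!\int_{-1/2}^{1/2} Q_2\bigl(t,\, \alpha_k \nabla^2 v + \grs g_k - t \nabla^2 v + \check{B}(t)\bigr) \mathd t\, \mathd x'.
\]
A diagonal extraction $k = k(\theta) \to \infty$, chosen slow enough that $\theta^{-1/2}\|\alpha_{k(\theta)}\|_{2,2} \to 0$ and $\theta^{-1/2}\|g_{k(\theta)}\|_{1,2} \to 0$, then yields $\mathcal{I}^{\theta}_{\rm vK}(u_{\theta,k(\theta)}, v_{\theta,k(\theta)}) \to \mathcal{I}_{\rm lKi}(v)$ together with $v_{\theta,k(\theta)} \to v$ strongly (hence weakly) in $W^{2,2}(\omega)$. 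Finally, Remark \ref{rem:xu-xv-spaces-energy-same} allows the pair to be renormalized into $X_w$ without affecting the energy.

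The main obstacle is Corollary \ref{cor:representation-matrix-minimizer}, which packages the density and matrix-representation machinery of Section \ref{sec:approximation-and-representation} and is the one place where convexity of $\omega$ really enters. Once that tool is available, the essential algebraic step is the identity $\grs(\alpha \nabla v) - \tfrac{1}{2}(\nabla \alpha \otimes \nabla v + \nabla v \otimes \nabla \alpha) = \alpha \nabla^2 v$: it lets the single correction $\theta^{-1/2}(g_k + \alpha_k \nabla v)$ do double duty, simultaneously absorbing the cross term created by perturbing $v$ and producing the $\alpha_k \nabla^2 v$ piece required to approximate the pointwise minimizer $E^{\star}(x') = \operatorname{argmin}_{E} \int_{-1/2}^{1/2} Q_2\bigl(t, E - t \nabla^2 v(x') + \check{B}(t)\bigr) \mathd t$ of the relaxed form $\overline{Q}_2^{\star}$.
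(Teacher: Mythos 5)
Your proposal is correct and follows essentially the same strategy as the paper's own proof: reduce by density (Theorem \ref{thm:density-singular-hessian}) to smooth $v$ with singular Hessian, take $u_0$ with $\nabla_s u_0 + \tfrac12 \nabla v\otimes\nabla v = 0$, invoke Corollary \ref{cor:representation-matrix-minimizer} for the optimal $A_{\min}$, and use the same ansatz $v_\theta = v - \theta^{-1/2}\alpha_k$, $u_\theta = u_0 + \theta^{-1/2}(g_k + \alpha_k\nabla v)$ so that the $\theta^{-1/2}$ cross terms from $\nabla v_\theta\otimes\nabla v_\theta$ cancel against $\nabla_s(\alpha_k\nabla v) - \alpha_k\nabla^2 v$, followed by a diagonal extraction. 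The only superficial differences are cosmetic (you justify the existence of $u_0$ by the Saint-Venant/curl-curl compatibility rather than citing {\cite[Proposition 9]{friesecke_hierarchy_2006}}, you parameterize the approximation by $k$ rather than the paper's $\delta$ with the accompanying Cauchy-inequality estimate, and you explicitly invoke Remark \ref{rem:xu-xv-spaces-energy-same} to land in $X_w$, a step the paper leaves implicit).
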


\begin{proof}
  By Theorem \ref{thm:density-singular-hessian} we can work with functions $v
  \in \mathcal{V}_0$, see {\eqref{def:V0}}, which are smooth with singular
  Hessian, since they are dense in the restriction to $X_v$. By
  {\cite[Proposition 9]{friesecke_hierarchy_2006}} there exists a
  displacement $u : \omega \rightarrow \mathbb{R}^2$ in $W^{2, 2} (\omega ;
  \mathbb{R}^2)$ such that
  
  {\begin{equation}
  \label{eq:thm:upper-bound-vk-to-lki}
  {\grs} u+{\tfrac{1}{2}} {\nabla}v{\otimes}{\nabla}v=0.
  \end{equation}}
  
  Fix $\delta > 0$ and, using Corollary
  \ref{cor:representation-matrix-minimizer}, choose smooth functions $\alpha
  \in C^{\infty} (\overline{\omega}), g \in C^{\infty} (\overline{\omega} ;
  \mathbb{R}^2)$ such that
  \[ \left\| \grs g + \alpha \nabla^2 v - A_{\min} \right\|^2_{0, 2} < \delta,
  \]
  where $A_{\min} \in L^{\infty} (\omega ; \mathbb{R}^{2 \times
  2}_{\tmop{sym}})$ is defined as
  \[ A_{\min} \assign \underset{A \in \mathbb{R}^{2 \times
     2}_{\tmop{sym}}}{\tmop{argmin}} \int_{- 1 / 2}^{1 / 2} Q_2 (t, A - t
     \nabla^2 v + \check{B} (t)) \mathd t. \]
  Define now the recovery sequence $(u_{\theta}, v_{\theta})_{\theta > 0}$
  with
  \[ u_{\theta} \assign u + \tfrac{1}{\sqrt{\theta}}  (\alpha \nabla v + g),
     \applicationspace{1 \tmop{em}} v_{\theta} \assign v -
     \tfrac{1}{\sqrt{\theta}} \alpha . \]
  Clearly $v_{\theta} = v - \theta^{- 1 / 2} \alpha \rightarrow v$ as $\theta
  \rightarrow \infty$ in $W^{2, 2} (\omega)$. Furthermore
  \[ \sqrt{\theta}  \grs u_{\theta} = \sqrt{\theta}  \grs u + \grs g + (\nabla
     \alpha \otimes \nabla v)_s + \alpha \nabla^2 v \]
  \[ \tfrac{\sqrt{\theta}}{2} \nabla v_{\theta} \otimes \nabla v_{\theta} =
     \tfrac{\sqrt{\theta}}{2} \nabla v \otimes \nabla v + \tfrac{1}{2
     \sqrt{\theta}} \nabla \alpha \otimes \nabla \alpha - (\nabla \alpha
     \otimes \nabla v)_s, \]
  and 
  \[ - t \nabla^2 v_{\theta} = - t \nabla^2 v + \tfrac{t}{\sqrt{\theta}}
     \nabla^2 \alpha, \]
  so that, using {\eqref{eq:thm:upper-bound-vk-to-lki}} and the fact that the
  product $\| \nabla \alpha \otimes \nabla \alpha \|_{0, 2} = \| \nabla \alpha
  \|_{0, 4}^2$ is bounded we have
  \begin{eqnarray*}
    \mathcal{I}_{\rm vK}^{\theta} (u_{\theta}, v_{\theta}) & = &
    \frac{1}{2}  \int_{\omega} \int_{- 1 / 2}^{1 / 2} Q_2 \left( t, \theta^{1
    / 2} A_{\theta} - t \nabla^2 v_{\theta} + \check{B} (t) \right) \mathd t
    \mathd x'\\
    & = & \frac{1}{2}  \int_{\omega} \int_{- 1 / 2}^{1 / 2} Q_2 \left( t,
    \grs g + (\alpha - t) \nabla^2 v + \check{B} (t) \right) \mathd t \mathd
    x' +\mathcal{O} ( \theta^{- 1 / 2} ) .
  \end{eqnarray*}
  Now subtract and add $A_{\min}$ inside $Q_2$ and use Cauchy's inequality
  to get 
  \begin{align*}
    & \int_{- 1 / 2}^{1 / 2} Q_2 \left( t, \grs g + \alpha \nabla^2 v - t \nabla^2 v +
    \check{B} \right) \mathd t \\
    &~~ \leqslant~ \left( 1 + \sqrt{\delta} \right)  \int_{- 1 / 2}^{1 / 2} Q_2 (t,
    A_{\min} - t \nabla^2 v + \check{B}) \mathd t\\
    &\qquad~ + \frac{1}{4 \sqrt{\delta}}  \underbrace{\int_{- 1 / 2}^{1 / 2} Q_2
    \left( t, \grs g + \alpha \nabla^2 v - A_{\min} \right) \mathd
    t}_{\lesssim \left\| \grs g + \alpha \nabla^2 v - A_{\min} \right\|^2_{0,
    2} < \delta}\\
    &~~ =~ \int_{- 1 / 2}^{1 / 2} Q_2 (t, A_{\min} - t \nabla^2 v + \check{B})
    \mathd t +\mathcal{O}_{\delta \downarrow 0} ( \delta^{1 / 2} ).
  \end{align*}
  We plug this in and obtain:
  \begin{eqnarray*}
    \mathcal{I}_{\rm vK}^{\theta} (u_{\theta}, v_{\theta}) & \leqslant &
    \frac{1}{2}  \int_{\omega} \int_{- 1 / 2}^{1 / 2} Q_2 (t, A_{\min} - t
    \nabla^2 v + \check{B} (t)) \mathd t \mathd x'\\
    &  & \applicationspace{2 \tmop{em}} +\mathcal{O} ( \theta^{- 1 / 2} ) 
    + \mathcal{O}_{\delta \downarrow 0} ( \delta^{1 / 2} )\\
    & \overset{\theta \uparrow \infty}{\longrightarrow} & \frac{1}{2} 
    \int_{\omega} \int_{- 1 / 2}^{1 / 2} Q_2 (t, A_{\min} - t \nabla^2 v +
    \check{B} (t)) \mathd t \mathd x' +\mathcal{O}_{\delta \downarrow 0}
    ( \delta^{1 / 2} ) .
  \end{eqnarray*}
  The proof is concluded by letting $\delta \rightarrow 0$ and passing to a
  diagonal sequence.
\end{proof}

We finish the proof of Theorem \ref{thm:gamma-interpolating} with the lower
and upper bounds for the transition from $\alpha = 3$ to $\alpha > 3$. The
lack of constraints in the limit functional makes the proofs straightforward.

\begin{theorem}[Lower bound, von Kármán to linearised von Kármán]
  \label{thm:lower-bound-vk-to-lvk}Let $(u_{\theta}, v_{\theta})_{\theta > 0}$
  be a sequence in $X_w$ such that $(\theta^{1 / 2} u_{\theta}, v_{\theta})
  \rightarrow (u, v)$ in $X_w$ as $\theta \rightarrow 0$. Then
  \[ \underset{\theta \rightarrow 0}{\tmop{linf}} \mathcal{I}_{\rm vK}^{\theta} 
  (u_{\theta}, v_{\theta}) \geqslant \mathcal{I}_{\rm lvK} (u, v) . \]
\end{theorem}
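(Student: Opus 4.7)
The plan is to pass to the weak $L^2$ limit inside the integrand and exploit sequential weak lower semicontinuity of the convex quadratic functional associated with $Q_2$. Without loss of generality, passing to a subsequence realising the liminf, we may assume $\sup_{\theta > 0} \mathcal{I}_{\rm vK}^{\theta}(u_{\theta}, v_{\theta}) < \infty$; otherwise there is nothing to prove. Then Theorem \ref{thm:compactness-interpolating} guarantees, along a further (non-relabelled) subsequence, that $\theta^{1/2} u_{\theta} \rightharpoonup u$ in $W^{1,2}(\omega; \mathbb{R}^2)$ and $v_{\theta} \rightharpoonup v$ in $W^{2,2}(\omega)$, consistent with the hypothesis.

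The first key step is to rewrite
\[
  \sqrt{\theta}\, A_{\theta} \;=\; \grs(\sqrt{\theta}\, u_{\theta}) + \tfrac{\sqrt{\theta}}{2} \nabla v_{\theta} \otimes \nabla v_{\theta},
\]
and analyse each summand separately. From $\theta^{1/2} u_{\theta} \rightharpoonup u$ in $W^{1,2}$ we immediately get $\grs(\sqrt{\theta}\, u_{\theta}) \rightharpoonup \grs u$ weakly in $L^2(\omega; \mathbb{R}^{2\times 2}_{\rm sym})$. For the quadratic term, the Sobolev embedding $W^{1,2}(\omega) \hookrightarrow L^4(\omega)$ combined with $\|v_{\theta}\|_{2,2} \leq C$ (from Theorem \ref{thm:compactness-interpolating}) yields $\|\nabla v_{\theta} \otimes \nabla v_{\theta}\|_{0,2} = \|\nabla v_{\theta}\|_{0,4}^2 \leq C$, and hence $\tfrac{\sqrt{\theta}}{2}\, \nabla v_{\theta} \otimes \nabla v_{\theta} \to 0$ \emph{strongly} in $L^2$ as $\theta \downarrow 0$. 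Summing the weak and the strong convergence gives $\sqrt{\theta}\, A_{\theta} \rightharpoonup \grs u$ weakly in $L^2(\omega; \mathbb{R}^{2\times 2})$.

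Next, extending $\sqrt{\theta}\, A_{\theta}$ and $\grs u$ trivially in $x_3$ and using $\nabla^2 v_{\theta} \rightharpoonup \nabla^2 v$ in $L^2(\omega)$, we obtain
\[
  \sqrt{\theta}\, A_{\theta} - x_3 \nabla^2 v_{\theta} + \check B(x_3) \;\rightharpoonup\; \grs u - x_3 \nabla^2 v + \check B(x_3) \quad \text{weakly in } L^2(\Omega_1; \mathbb{R}^{2\times 2}).
\]
Since $Q_2(x_3, \cdot)$ is a non-negative quadratic form with coefficients essentially bounded in $x_3$ (Assumption \ref{main-assumptions}.\ref{assumption:Q3-ess-bounded}), the map
\[
  F \mapsto \tfrac{1}{2} \int_{\Omega_1} Q_2(x_3, F(x', x_3))\, \mathrm{d}x
\]
is convex and continuous on $L^2(\Omega_1; \mathbb{R}^{2\times 2})$, hence sequentially weakly lower semicontinuous. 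Applying this to the displayed sequence yields
\[
  \liminf_{\theta \downarrow 0} \mathcal{I}_{\rm vK}^{\theta}(u_{\theta}, v_{\theta}) \;\geq\; \tfrac{1}{2} \int_{\Omega_1} Q_2\bigl(x_3, \grs u - x_3 \nabla^2 v + \check B(x_3)\bigr)\, \mathrm{d}x \;=\; \mathcal{I}_{\rm lvK}(u, v),
\]
by the definition of $\overline{Q}_2$ in \eqref{def:Q-bar}. The main (minor) subtlety is the passage from weak convergence of $\sqrt{\theta}\, A_\theta$ to lower semicontinuity, which is handled cleanly once the nonlinear $\nabla v_\theta \otimes \nabla v_\theta$ term is shown to vanish strongly.
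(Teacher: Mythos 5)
Your proof is correct and follows essentially the same path as the paper's: a uniform $L^2$ bound on $\nabla v_\theta \otimes \nabla v_\theta$ via the Sobolev embedding $W^{1,2}\hookrightarrow L^4$, the weak $L^2$ convergence $\sqrt{\theta}\,A_\theta \rightharpoonup \grs u$, and sequential weak lower semicontinuity of the convex, non-negative quadratic functional associated with $Q_2$. The only difference is presentational: you spell out the decomposition into a weakly convergent symmetric-gradient part and a strongly vanishing quadratic part, whereas the paper states the weak limit of $\sqrt{\theta}\,A_\theta$ directly.
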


\begin{proof}
  We may assume that $\sup_{\theta > 0} \mathcal{I}^{\theta}_{\rm vK}
  (u_{\theta}, v_{\theta}) \leqslant C$. Then by Theorem
  \ref{thm:compactness-interpolating} $(\nabla v_{\theta})_{\theta > 0}$ is
  bounded in $W^{1, 2}$ and by the Sobolev embedding $W^{1, 2} \hookrightarrow
  L^4$ we have as before $\| \nabla v_{\theta} \otimes \nabla v_{\theta}
  \|_{0, 2} = \| \nabla v_{\theta} \|_{0, 4}^2 \leqslant C$. Consequently
  \[ \sqrt{\theta} A_{\theta} = \sqrt[]{\theta}  \grs u_{\theta} +
     \frac{\sqrt{\theta}}{2} \nabla v_{\theta} \otimes \nabla v_{\theta}
     \rightharpoonup \grs u \text{\quad in } L^2 \text{ as } \theta \downarrow
     0. \]
  By convexity of the quadratic form $Q_2$ we conclude 
  \begin{eqnarray*}
    \underset{\theta \downarrow 0}{\tmop{linf}} \mathcal{I}_{\rm vK}^{\theta} 
  (u_{\theta}, v_{\theta}) & \geqslant & \frac{1}{2} 
    \int_{\omega} \int_{- 1 / 2}^{1 / 2} Q_2 \left( x_3, \grs u - x_3 \nabla^2
    v + \check{B} (x_3) \right) \mathd x_3 \mathd x'\\
    & = & \mathcal{I}_{\rm lvK} (u, v) .
  \end{eqnarray*}
\end{proof}

\begin{theorem}[Upper bound, von Kármán to linearised von Kármán]
  \label{thm:upper-bound-vk-to-lvk}Let $(u, v) \in X_w$. There exists a
  sequence $(u_{\theta}, v_{\theta})_{\theta > 0} \subset X_w$ converging to
  $(u, v) \in X_w$ such that $\mathcal{I}_{\rm vK}^{\theta} (u_{\theta},
  v_{\theta}) \rightarrow \mathcal{I}_{\rm lvK} (u, v)$ as
  $\theta \rightarrow 0$.
\end{theorem}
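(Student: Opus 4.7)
The plan is to take the simplest possible ansatz: set $u_\theta \assign \theta^{-1/2} u$ and $v_\theta \assign v$. Since $X_u$ is a linear space, $u_\theta \in X_u$ for every $\theta > 0$, and so $(u_\theta, v_\theta) \in X_w$. The convergence of the recovery sequence is then immediate in the notion used in Theorem \ref{thm:lower-bound-vk-to-lvk}: indeed $\theta^{1/2} u_\theta = u$ and $v_\theta = v$ for every $\theta$, so $(\theta^{1/2} u_\theta, v_\theta) \to (u, v)$ strongly in $X_w$.

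Next I would substitute into the energy and compute the rescaled strain explicitly:
\[
  \sqrt{\theta}\, A_\theta
  = \sqrt{\theta}\, \grs u_\theta + \tfrac{\sqrt{\theta}}{2} \nabla v_\theta \otimes \nabla v_\theta
  = \grs u + \tfrac{\sqrt{\theta}}{2} \nabla v \otimes \nabla v.
\]
By the Sobolev embedding $W^{2,2}(\omega) \hookrightarrow W^{1,4}(\omega)$ the product $\nabla v \otimes \nabla v$ belongs to $L^2(\omega; \mathbb{R}^{2 \times 2})$, so the nonlinear correction decays to $0$ in $L^2$ as $\theta \downarrow 0$. Consequently the full integrand argument
\[
  \sqrt{\theta}\, A_\theta - x_3 \nabla^2 v_\theta + \check{B}(x_3)
  \longrightarrow \grs u - x_3 \nabla^2 v + \check{B}(x_3)
  \quad \text{in } L^2(\Omega_1; \mathbb{R}^{2 \times 2}).
\]

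To conclude, I would pass to the limit using continuity of the quadratic functional. By Assumption \ref{main-assumptions}.\ref{assumption:Q3-ess-bounded} together with the bound \eqref{eq:Q2-L-bounds}, the form $Q_2(t, \cdot)$ has $L^\infty$-bounded coefficients in $t$ and grows quadratically, so expanding $Q_2(x_3, F_\theta) - Q_2(x_3, F) = Q_2(x_3, F_\theta - F) + 2 Q_2[x_3; F_\theta - F, F]$ and using Cauchy--Schwarz shows that $F \mapsto \int_{\Omega_1} Q_2(x_3, F) \mathd x$ is continuous under $L^2$-convergence. Hence $\mathcal{I}^{\theta}_{\rm vK}(u_\theta, v_\theta) \to \mathcal{I}_{\rm lvK}(u, v)$ as $\theta \downarrow 0$, which even gives equality of limits (and therefore the desired $\limsup$ bound).

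There is really no obstacle here: the crucial observation is that the rescaling is chosen precisely so that the linear part $\sqrt{\theta}\, \grs u_\theta$ reproduces $\grs u$ exactly, while the quadratic nonlinearity in $v_\theta$, being damped by the prefactor $\sqrt{\theta}$, is asymptotically invisible. Unlike the $\theta \uparrow \infty$ case no density argument, constraint $\det \nabla^2 v = 0$, or representation theorem is needed, because the target functional $\mathcal{I}_{\rm lvK}$ is unconstrained and continuous on $X_w$.
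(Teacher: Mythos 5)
Your proposal is correct and follows exactly the same route as the paper: the same ansatz $u_\theta = \theta^{-1/2}u$, $v_\theta = v$, the same $L^4$-Sobolev embedding to control $\nabla v \otimes \nabla v$, and the same passage to the limit via continuity of the quadratic integrand under $L^2$-convergence. You merely spell out the continuity step slightly more explicitly than the paper does.
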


\begin{proof}
  Define
  \[ u_{\theta} \assign \theta^{- 1 / 2} u \text{\quad and\quad} v_{\theta}
     \assign v. \]
  Clearly $(\theta^{1 / 2} u_{\theta}, v_{\theta}) \equiv (u, v)$ and using
  again $W^{1, 2} \hookrightarrow L^4$ we have:
  \[ \sqrt{\theta} A_{\theta} = \grs u + \tfrac{1}{2} \theta^{1 / 2} \nabla v
     \otimes \nabla v \underset{\theta \downarrow 0}{\longrightarrow}  \grs u
     \text{\quad in } L^2 . \]
  Consequently:
  \begin{eqnarray*}
    \mathcal{I}_{\rm vK}^{\theta} (u_{\theta}, v_{\theta}) & = &
    \frac{1}{2}  \int_{\omega} \int_{- 1 / 2}^{1 / 2} Q_2 \left( x_3,
    \sqrt{\theta} A_{\theta} - x_3 \nabla^2 v_{\theta} + \check{B} (x_3)
    \right) \mathd x_3 \mathd x'\\
    & \underset{\theta \downarrow 0}{\longrightarrow} & \frac{1}{2} 
    \int_{\omega} \int_{- 1 / 2}^{1 / 2} Q_2 \left( x_3, \grs u - x_3 \nabla^2
    v + \check{B} (x_3) \right) \mathd x_3 \mathd x'\\
    & \overset{}{=} & \mathcal{I}_{\rm lvK} (u, v),
  \end{eqnarray*}
  as stated.
\end{proof}

%-----------------------------------------------------------------------------------------
%-----------------------------------------------------------------------------------------
%-----------------------------------------------------------------------------------------
\section{Approximation and representation
theorems}\label{sec:approximation-and-representation}
%-----------------------------------------------------------------------------------------

A key ingredient in the proofs of the upper bounds is the density of certain
smooth functions in the space where the energy is minimised. In particular,
for the case $\alpha \in (2, 3)$ we obtain a result proving that $W^{2, 2}$
maps with singular Hessian can be approximated by a specific set of smooth 
functions with the 
same property. In order to apply the results of {\cite{schmidt_plate_2007}} we
may restrict ourselves to isometries which partition $\omega$ into finitely
many so-called {\tmem{bodies}} and {\tmem{arms}}. More precisely, 
suppose $y : \omega \to \mathbb{R}^3$ is a $W^{2, 2}$ isometric immersion and denote 
by $\tmop{II} = \tmop{II}_{(y)}$ its second fundamental form, i.e., 
$\tmop{II}_{ij} = y_{,i}\cdot (y_{,1}\wedge y_{,2})_{,j}$. Then $\tmop{II}$ 
is singular, and there exists $f_y\in W^{1,2}$ such that $\nabla f_u = \tmop{II}$. 
We call $\gamma:[0,l]\to \omega$, parameterised by arclength, a {\em leading curve} 
if it is orthogonal to the inverse images of $f_y$ on regions where $f_y$ is not 
constant. We denote by $\kappa$ and $\nu$ the curvature and unit normal, respectively, 
i.e., $\gamma'' = \kappa\nu$. In fact, $\kappa$ must be bounded, hence 
$\gamma \in W^{2,\infty}$. A subdomain $\omega'\subset \omega$ is said to be 
{\em covered} by a curve $\gamma$ if
\[ \omega'\subset\{\gamma(t) + s\nu(t): s\in \mathbb{R}, t\in[0,l]\}. \] 
As shown in {\cite{pakzad_sobolev_2004}}, $\omega$ can be partitioned into so-called bodies and 
arms. Here a {\em body} is a connected maximal subdomain on which $y$ is 
affine and whose boundary contains more than two segments inside $\omega$. 
An {\em arm} is a maximal subdomain $\omega(\gamma)$ covered by some 
leading curve $\gamma$. 

In {\cite{schmidt_plate_2007}} (built on {\cite{pakzad_sobolev_2004}}) 
it is shown that the set
\[ \mathcal{A}_0 \assign \left\{ y \in C^{\infty} (\overline{\omega} ;
   \mathbb{R}^3  \nobracket : y \text{ is an isometry finitely partitioning }
   \omega \right\}, \]
is dense in the $W^{2, 2}$-isometries. Here we show that, additionally,
\begin{equation}
  \label{def:V0} \mathcal{V}_0 \assign \{v \in C^{\infty} (\overline{\omega})
  : \exists \eta > 0 \text{ s.t. } \eta v = y_3 \text{ for some } y \in
  \mathcal{A}_0 \}
\end{equation}
is $W^{2, 2}$-dense in $W^{2, 2}_{s \nospace h}$.\footnote{The density of $C^{2}(\omega) \cap 
  W^{2, 2}_{s \nospace h}(\omega)$ in $W^{2, 2}_{s \nospace h}(\omega)$ was 
  first announced in {\cite{pakzad_sobolev_2004}} to follow along the 
  same lines as the density of smooth isometric immersions in the class of 
  $W^{2,2}$ isometric immersions. As this seems not to be straightforward, we 
  follow a different route reducing the density of $\mathcal{V}_0$ in 
  $W^{2, 2}_{s \nospace h}$ to the density of $\mathcal{A}_0$ in the set of 
  $W^{2,2}$ isometric immersions. We are grateful to Peter Hornung for the help provided with this
  proof.}

\begin{theorem}
  \label{thm:density-singular-hessian}Let $\omega \subset \mathbb{R}^2$ be a
  bounded, convex, Lipschitz domain. Then the set $\mathcal{V}_0$ is $W^{2,
  2}$-dense in $W^{2, 2}_{s h} (\omega)$. In particular $\det \nabla^2 v = 0$
  for all $v \in \mathcal{V}_0$.
\end{theorem}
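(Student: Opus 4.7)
My plan is to reduce the density of $\mathcal{V}_0$ in $W^{2,2}_{s \nospace h}(\omega)$ to the density of $\mathcal{A}_0$ in the class of $W^{2,2}$ isometric immersions of $\omega$ established in {\cite{pakzad_sobolev_2004, schmidt_plate_2007}}. The idea is to \emph{lift} any $v \in W^{2,2}_{s \nospace h}(\omega)$ to an exact $W^{2,2}$ isometry $y_\eta = (x' + \eta^2 u_\eta, \eta v)$ for a small parameter $\eta > 0$, approximate $y_\eta$ by smooth elements of $\mathcal{A}_0$, and then \emph{push back} by dividing the third component by $\eta$ to obtain a sequence in $\mathcal{V}_0$ approximating $v$.

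Given $v \in W^{2,2}_{s \nospace h}(\omega)$, I would first invoke {\cite[Theorem 7]{friesecke_hierarchy_2006}} --- the same tool used in the recovery sequence for Theorem \ref{thm:upper-bound-lki} --- to produce, for every sufficiently small $\eta > 0$, an in-plane displacement $u_\eta \in W^{2,2}(\omega; \mathbb{R}^2)$ such that $y_\eta \assign (x' + \eta^2 u_\eta, \eta v)$ is a $W^{2,2}$ isometric immersion of $\omega$ into $\mathbb{R}^3$; in particular $v = y_{\eta, 3}/\eta$. The density of $\mathcal{A}_0$ then yields a sequence $(y^k)_{k \in \mathbb{N}} \subset \mathcal{A}_0$ with $y^k \to y_\eta$ in $W^{2,2}(\omega; \mathbb{R}^3)$. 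Setting $v^k \assign y^k_3/\eta \in C^\infty(\overline\omega)$, the definition of $\mathcal{V}_0$ gives $v^k \in \mathcal{V}_0$, and the trivial estimate $\|v^k - v\|_{2, 2} \leqslant \eta^{-1}\|y^k - y_\eta\|_{2, 2} \to 0$ yields the required density.

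To verify the ``in particular'' assertion $\det \nabla^2 v = 0$ for every $v \in \mathcal{V}_0$, I would fix $y \in \mathcal{A}_0$ with $\eta v = y_3$. Differentiating the isometry identities $y_{,i} \cdot y_{,j} = \delta_{ij}$ and combining the resulting equations via the symmetry $y_{,ij} = y_{,ji}$ shows $y_{,ij} \cdot y_{,k} = 0$ for all $i, j, k \in \{1, 2\}$. Hence each $y_{,ij}$ is parallel to the unit normal $n = y_{,1} \wedge y_{,2}$, so $y_{,ij} = \tmop{II}_{ij}\, n$ with $\tmop{II}$ the second fundamental form, giving $\nabla^2 y_\alpha = n_\alpha \tmop{II}$ componentwise for each $\alpha \in \{1, 2, 3\}$. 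By Theorema Egregium applied to the flat domain $\omega$, $\det \tmop{II} = 0$, and therefore $\det \nabla^2 v = \eta^{-2} \det \nabla^2 y_3 = \eta^{-2} n_3^2 \det \tmop{II} = 0$.

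The only genuine technical input is the first step: producing an in-plane correction $u_\eta$ that turns $(x', \eta v)$ into an \emph{exact} $W^{2,2}$ isometry for arbitrary $v \in W^{2,2}_{s \nospace h}(\omega)$ is delicate and crucially relies on the singular-Hessian assumption. This is supplied off the shelf by {\cite[Theorem 7]{friesecke_hierarchy_2006}}, so no new work is required there. The smooth approximation step and the pointwise identity for smooth isometric immersions are essentially automatic, so I do not anticipate further obstacles.
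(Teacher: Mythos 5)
Your overall strategy---lift to an isometric immersion, approximate in $\mathcal{A}_0$, push back by dividing the third component---is exactly the route the paper takes, and your verification that $\det \nabla^2 v = 0$ for $v \in \mathcal{V}_0$ is correct. However, there is a genuine gap in the first step.

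You claim that for any $v \in W^{2,2}_{s\nospace h}(\omega)$ and sufficiently small $\eta > 0$, {\cite[Theorem 7]{friesecke_hierarchy_2006}} supplies an in-plane correction $u_\eta$ making $(x' + \eta^2 u_\eta,\, \eta v)$ an exact $W^{2,2}$ isometry. But that theorem requires the out-of-plane component to be Lipschitz with small Lipschitz constant (something like $\|\nabla(\eta v)\|_{L^\infty} < 1$). In two dimensions $W^{2,2}(\omega)$ does \emph{not} embed into $W^{1,\infty}(\omega)$, so a general $v \in W^{2,2}_{s\nospace h}(\omega)$ may have unbounded gradient, in which case $\eta\|\nabla v\|_{L^\infty} = \infty$ for every $\eta > 0$ and the hypothesis of {\cite[Theorem 7]{friesecke_hierarchy_2006}} is never met. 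This is precisely why the paper's proof inserts a preliminary truncation step: by {\cite[Theorem 10]{friesecke_hierarchy_2006}} one first finds $\tilde v \in W^{2,2}_{s\nospace h}(\omega) \cap W^{1,\infty}(\omega)$ with $\|v - \tilde v\|_{2,2} < \varepsilon/2$, and only then chooses $\eta$ small enough that $\|\nabla(\eta\tilde v)\|_\infty < 1/2$ before invoking Theorem 7. Once you add that intermediate approximation, your argument agrees with the paper's and goes through.
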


\begin{proof}
   
  {\step{1: Approximation}{Let $v \in W^{2, 2}_{s \nospace h} (\omega)$ and
  $\varepsilon > 0$. By {\cite[Theorem 10]{friesecke_hierarchy_2006}}, we
  can find some $\tilde{v} \in W^{2, 2}_{s \nospace h} (\omega) \cap W^{1,
  \infty} (\omega)$ s.t. $\| v - \tilde{v} \|_{2, 2} < \varepsilon / 2$ and,
  for $\eta = \eta (\varepsilon) > 0$ sufficiently small, $\| \nabla \eta
  \tilde{v} \|_{\infty} < 1 / 2$. One can now apply {\cite[Theorem
  7]{friesecke_hierarchy_2006}} to construct an isometry $\tilde{y} \in
  W^{2, 2} (\omega ; \mathbb{R}^3)$ whose out-of-plane component $\tilde{y}_3
  = \eta \tilde{v}$. By {\cite[Proposition
  2.3]{schmidt_plate_2007}}  
  we find a smooth $y \in \mathcal{A}_0$ such that 
  $\| y - \tilde{y} \|_{2, 2} < \varepsilon \eta / 2$ and in particular 
  $\| y_3 - \tilde{y}_3 \|_{2, 2} < \varepsilon \eta / 2$. 
  Setting $\psi \assign y_3 / \eta \in \mathcal{V}_0$ we conclude
  \[ \| v - \psi \|_{2, 2} \leqslant \| v - \tilde{v} \|_{2, 2} + \| \tilde{v}
     - \psi \|_{2, 2} < \varepsilon . \]}}
  
  {\step{2: Inclusion}{Let $v \in \mathcal{V}_0$ with $\eta v = y_3, \eta > 0$
  for some smooth isometry $y \in \mathcal{A}_0$. Recall that the second
  fundamental form $\Iota \Iota_{(y)}$ of any smooth isometric immersion $y$
  is singular and the identity $\nabla^2 y_j = - \Iota \Iota_{(y)} n_j$ holds
  for all $j \in \{ 1, 2, 3 \}$, where $n = y_{, 1} \wedge y_{,
  2}$.\footnote{See {\cite[Proposition 3]{muller_regularity_2005}} for a
  proof for $W^{2, 2}$ isometries on Lipschitz domains.} Therefore $\det (\eta
  \nabla^2 v) = \det (- \tmop{II}_{(y)} n_3) = 0$ and the proof is complete.}}
\end{proof}

\begin{remark}
We note that the following similar statement can be proved using the same
approximation arguments and {\cite[Theorem 1]{hornung_approximation_2011}}
(with the bonus of in addition holding for more general domains). 
  Let $\omega \subset \mathbb{R}^2$ be a bounded, simply connected, Lipschitz
  domain whose boundary contains a set $\Sigma = \overline{\Sigma} \subset
  \partial \omega$ with $\mathcal{H}^1 (\Sigma) = 0$ such that on its
  complement $\partial \omega \backslash \Sigma$ the outer unit normal to
  $\omega$ exists and is continuous. Then the set $W^{2, 2}_{s h} (\omega)
  \cap C^{\infty} (\overline{\omega})$ is $W^{2, 2}$-dense in $W^{2, 2}_{s h}
  (\omega)$.
\end{remark}

Once one can work with smooth functions, the essential tool for the
construction of the recovery sequences for $\alpha \in (2, 3)$ is the
following representation theorem for maps with singular Hessian and its
corollary, both inspired by {\cite{schmidt_plate_2007}}. A crucial
component in the proof of the result in that paper is the ability to use
approximations partitioning the domain in regions over which they are affine.
This is in close connection to the {\tmem{rigidity property}} for $W^{2,
2}$-isometries proved in {\cite[Theorem II]{pakzad_sobolev_2004}}: every
point of their domain lies either on an open set or on a segment connecting
the boundaries where the map is affine.

\begin{theorem}
  \label{thm:representation-matrix}Let $v \in \mathcal{V}_0$ and $A \in
  C^{\infty} (\overline{\omega} ; \mathbb{R}^{2 \times 2}_{\tmop{sym}})$ such
  that $A \equiv 0$ in a neighbourhood of $\{ \nabla^2 v = 0 \}$. There exist
  maps $\alpha, g_1, g_2 \in C^{\infty} (\overline{\omega})$ such that $\alpha
  = g_i = 0$ on $\{ \nabla^2 v = 0 \}$ and
  \[ A = \grs g + \alpha \nabla^2 v. \]
\end{theorem}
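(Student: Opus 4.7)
The plan is to exploit the ruled structure coming from the definition of $\mathcal{V}_0$. By hypothesis there is a smooth isometric immersion $y \in \mathcal{A}_0$ with $\eta v = y_3$, and by construction of $\mathcal{A}_0$ the domain $\omega$ decomposes into finitely many \emph{bodies} (on which $y$, hence $v$, is affine and $\nabla^2 v \equiv 0$) and \emph{arms} (each covered by a single leading curve). On every body I set $\alpha = g_1 = g_2 = 0$, which is consistent with $A \equiv 0$ there by assumption. The problem reduces to constructing smooth $(g, \alpha)$ on each arm that match zero smoothly across every body/arm interface; since the decomposition is finite, the resulting pieces glue to give a global $C^\infty$ solution.

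On a fixed arm $\omega(\gamma)$, I use the diffeomorphic parametrization $\Phi(s,t) = \gamma(s) + t\nu(s)$ with $\tau = \gamma'$ and $\nu$ the unit normal to the leading curve $\gamma$. Since rulings $t \mapsto \Phi(s,t)$ lie in the kernel of $\nabla^2 v$, in the orthonormal frame $\{\tau(s),\nu(s)\}$ the Hessian is rank-one: $\nabla^2 v = \lambda\, \tau \otimes \tau$ for a smooth scalar $\lambda$ that vanishes precisely on the arm/body interface. Writing $g = g^\tau \tau + g^\nu \nu$ and contracting the equation $A = \grs g + \alpha \nabla^2 v$ against $\nu \otimes \nu$, $\tau \otimes \nu$, and $\tau \otimes \tau$ yields three scalar equations in which $\alpha$ appears only in the last one; the first two involve $\partial_t g^\nu$ and $\partial_t g^\tau$ (plus lower-order terms arising from the $s$-variation of $\tau, \nu$ and the Jacobian factor $1 - t\kappa$), so they are first-order ODEs in $t$ along each ruling.

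I solve these two $\nu$-containing equations as ODEs in $t$ with zero initial data on a body/arm interface $\{t = t_1(s)\}$. Because $A$ vanishes in an open neighborhood of this interface, the integrated $g^\tau, g^\nu$ vanish identically on an open slab adjacent to it, giving a smooth extension by zero into the body. The remaining equation then defines $\alpha := (A_{\tau\tau} - (\grs g)_{\tau\tau})/\lambda$; since both terms in the numerator vanish on the same open slab where $A$ and $g$ are zero, $\alpha$ extends smoothly by zero as well. The main obstacle is the far ruling end $\{t = t_2(s)\}$: if this is another body/arm interface, the integrated $g$ need not vanish there, producing a matching defect of the form $\int_{t_1(s)}^{t_2(s)} A_{\nu\nu}(s,t)\, dt$. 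I resolve this by covering the arm with two overlapping subregions, integrating in opposite directions from the two body/arm interfaces, and gluing with a smooth partition of unity supported in the interior of the arm where $\lambda \neq 0$; the algebraic definition of $\alpha$ then absorbs the remaining $\tau\tau$-component discrepancy from the gluing, completing the construction.
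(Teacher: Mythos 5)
Your approach is aligned with the paper's in spirit --- both rely on the bodies/arms decomposition and solve transport equations along the rulings --- but you are in effect re-deriving {\cite[Lemma 3.3]{schmidt_plate_2007}}, which the paper invokes as a black box. The paper's actual route is: write $\eta\nabla^2 v = -\tmop{II}_{(y)}\,n_3$ (with $n = y_{,1}\wedge y_{,2}$), apply the cited lemma to get $A = \grs g + \tilde\alpha\, \tmop{II}_{(y)}$ with $\tilde\alpha, g$ vanishing on $\{\tmop{II}_{(y)}=0\}$, check by re-examining the lemma's proof that $\tilde\alpha$ and $g$ also vanish on a neighbourhood of $\{n_3 = 0\}$, and finally set $\alpha := -\tilde\alpha\eta/n_3$. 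Your direct re-derivation has two genuine gaps.

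First, the claim that $\lambda$ (in $\nabla^2 v = \lambda\,\tau\otimes\tau$) ``vanishes precisely on the arm/body interface'' is false. Since $\eta\nabla^2 v = -\tmop{II}_{(y)}\,n_3$, one has $\{\nabla^2 v = 0\} = \{\tmop{II}_{(y)}=0\}\cup\{n_3 = 0\}$, and $\{n_3 = 0\}$ can consist of entire rulings in the \emph{interior} of an arm. One must verify separately that $\alpha$ and $g$ still vanish smoothly across these interior rulings --- this is precisely the extra difficulty the paper singles out beyond Lemma~3.3. The ODE-in-$t$ construction does handle it, because $A\equiv 0$ near those rulings forces the solution to be $0$ on an open $s$-slab around them, but your proposal never addresses this and in fact asserts the opposite.

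Second, the gluing step is not sound. If $\phi$ is your partition-of-unity function and $g^{(1)}, g^{(2)}$ are the two one-sided solutions, the glued field $g = \phi g^{(1)} + (1-\phi)g^{(2)}$ satisfies
$\grs g = \phi\,\grs g^{(1)} + (1-\phi)\,\grs g^{(2)} + \bigl(\nabla\phi\otimes(g^{(1)}-g^{(2)})\bigr)_{\mathrm{sym}}$.
The error term $\bigl(\nabla\phi\otimes(g^{(1)}-g^{(2)})\bigr)_{\mathrm{sym}}$ has nontrivial $\tau\otimes\nu$ and $\nu\otimes\nu$ components in general, not only a $\tau\otimes\tau$ component, so it cannot be absorbed into the term $\alpha\nabla^2 v$, which is purely proportional to $\tau\otimes\tau$. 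This is exactly where the proof of the cited lemma does its real work; the sketch hand-waves past it.

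Beyond these two points, the algebraic setup (rank-one Hessian $\nabla^2 v = \lambda\tau\otimes\tau$ on arms, contracting against $\nu\otimes\nu$, $\tau\otimes\nu$, $\tau\otimes\tau$, solving the first two as ODEs in $t$) is correct and matches the machinery behind {\cite[Lemma 3.3]{schmidt_plate_2007}}.
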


\begin{proof}
  Let $\eta > 0, y \in \mathcal{A}_0$ s.t. $\eta v = y_3$. Using that
  $\nabla^2 y_3 = - \Iota \Iota_{(y)} n_3$ holds by virtue of $y$ being an
  isometry, with $n = y_{, 1} \wedge y_{, 2}$ being the unit normal vector, we
  have that $A \equiv 0$ in a neighbourhood of $\{ \tmop{II}_{(y)} = 0 \} \cup
  \{ n_3 = 0 \}$, and
  \[ \{ \nabla^2 v = 0 \} = \{ \tmop{II}_{(y)} = 0 \} \cup \{ n_3 = 0 \} . \]
  We can apply {\cite[Lemma 3.3]{schmidt_plate_2007}}\footnote{Namely: If $y
  \in \mathcal{A}_0$ and $A \in C^{\infty} (\overline{\omega} ; \mathbb{R}^{2
  \times 2}_{\tmop{sym}})$ vanishes over a neighbourhood of $N = 
  \{ \tmop{II}_{(y)} = 0 \}$, then there exist $\tilde{\alpha}, g_1, g_2 \in C^{\infty}
  (\overline{\omega})$ vanishing on $N$ such that $A = \grs g + \tilde{\alpha} \tmop{II}_{(y)}$.} 
  to $y$ in order to obtain functions $\tilde{\alpha}, g_1, g_2 \in
  C^{\infty} (\overline{\omega})$ s.t. $\tilde{\alpha}, g_1, g_2 = 0$ on 
  $\{ \tmop{II}_{(y)} = 0 \}$ and $A = \grs g + \tilde{\alpha} \tmop{II}_{(y)}$.
  
  By examining the proof of this Lemma one can see that $\tilde{\alpha}, g
  \equiv 0$ in a neighbourhood of $\left\{n_3 = 0\right\}$: since over bodies
  one has $\tilde{\alpha}, g_1, g_2 = 0$ by construction, we need only
  consider arms. On these sets, if $n_3$ vanishes at a point then it
  vanishes at a whole line perpendicular to the leading curve, because the
  latter is orthogonal to the level sets of the gradient. Now, because $A = 0$
  in a neighbourhood of this line, when solving the equations in the proof of
  the Lemma which determine $g$ then $\tilde{\alpha}$, one obtains $u_{2, s} =
  0$ and $u_{2, t} = 0$, and with the boundary conditions $u_2 = 0$ then $u_1
  = 0$ is a solution to the remaining equation. Hence $g = 0$ and
  $\tilde{\alpha} = 0$ on these lines. Since the functions so obtained are
  $C^{\infty}$, we can define $\alpha \assign - \tilde{\alpha} \eta / n_3$ if
  $n_3 \neq 0$ and $\alpha = 0$ otherwise, and this is a smooth function such
  that
  \[ A = \grs g + \alpha \nabla^2 v. \]
\end{proof}

\begin{corollary} 
  \label{cor:representation-matrix-minimizer}Let $v \in \mathcal{V}_0$ and
  define for every $x' \in \omega$
  \[ A_{\min} (x') = \underset{A \in \mathbb{R}^{2 \times
     2}_{\tmop{sym}}}{\tmop{argmin}} \int_{- 1 / 2}^{1 / 2} Q_2 (t, A - t
     \nabla^2 v (x') + \check{B}_s) \mathd t. \]
  Then $A_{\min} \in L^2 (\omega ; \mathbb{R}^{2 \times 2}_{\tmop{sym}})$ and
  there exist sequences of functions $\alpha_k \in C^{\infty} (\overline{\omega}), 
  g_k \in C^{\infty} (\overline{\omega} ; \mathbb{R}^2)$
  such that
  \[ \left\| \grs g_k + \alpha_k \nabla^2 v - A_{\min} \right\|_{L^2 (\omega ;
     \mathbb{R}^{2 \times 2})} \longrightarrow 0 \text{ as } k \rightarrow
     \infty . \]
\end{corollary}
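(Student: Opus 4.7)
The plan has three stages. First, I would verify that $A_{\min}$ is a smooth, bounded field on $\bar\omega$: since $Q_2(t,\cdot)$ is, uniformly in $t$, a positive definite quadratic form on $\mathbb{R}^{2\times 2}_{\mathrm{sym}}$ by \eqref{eq:Q2-L-bounds} with $L^\infty$ coefficients, the functional $A \mapsto \int_{-1/2}^{1/2} Q_2(t, A - t\nabla^2 v(x') + \check{B}_s(t))\,\mathd t$ is strictly convex and coercive on symmetric matrices, and its pointwise minimiser satisfies a linear Euler--Lagrange equation with positive-definite coefficient matrix. Hence $A_{\min}(x')$ depends affinely on $\nabla^2 v(x')$, with coefficients determined by integrals of $Q_2$ and a constant offset coming from $\check{B}_s$. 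Smoothness of $v \in \mathcal{V}_0$ then gives $A_{\min} \in C^\infty(\bar\omega; \mathbb{R}^{2\times 2}_{\mathrm{sym}}) \subset L^\infty(\omega) \subset L^2(\omega)$.

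The decisive structural observation is that $\check{B}$ depends only on $t$: therefore, at every point $x'$ where $\nabla^2 v(x') = 0$, the minimisation problem is the same, and $A_{\min}$ equals a fixed constant matrix $A_0 \in \mathbb{R}^{2\times 2}_{\mathrm{sym}}$, namely the minimiser of the $x'$-free functional $A \mapsto \int Q_2(t, A + \check{B}_s(t))\,\mathd t$. Setting $g_0(x') := A_0 x'$, we have $\grs g_0 \equiv A_0$, and the shifted smooth field $\tilde{A} := A_{\min} - A_0$ vanishes on $\{\nabla^2 v = 0\}$.

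Second stage: for each $\varepsilon > 0$ I select a smooth cutoff $\chi_\varepsilon \in C^\infty(\bar\omega)$ vanishing on an $\varepsilon$-neighbourhood $N_\varepsilon$ of $\{\nabla^2 v = 0\}$ and equal to $1$ outside $N_{2\varepsilon}$, and define $A^\varepsilon := \chi_\varepsilon \tilde{A} \in C^\infty(\bar\omega; \mathbb{R}^{2\times 2}_{\mathrm{sym}})$. This field vanishes in a neighbourhood of $\{\nabla^2 v = 0\}$, so Theorem \ref{thm:representation-matrix} furnishes smooth $\tilde\alpha_\varepsilon$ and $\tilde g^\varepsilon$ with $A^\varepsilon = \grs \tilde g^\varepsilon + \tilde\alpha_\varepsilon \nabla^2 v$. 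Setting $g_\varepsilon := \tilde g^\varepsilon + g_0$ and $\alpha_\varepsilon := \tilde\alpha_\varepsilon$, I find $\grs g_\varepsilon + \alpha_\varepsilon \nabla^2 v = A^\varepsilon + A_0$. Since $\tilde A \in L^\infty(\omega)$ and $|N_{2\varepsilon}| \to 0$ as $\varepsilon \downarrow 0$, dominated convergence yields $A^\varepsilon \to \tilde A$ in $L^2$, whence $\grs g_\varepsilon + \alpha_\varepsilon \nabla^2 v \to \tilde A + A_0 = A_{\min}$ in $L^2$; choosing $\varepsilon = 1/k$ produces the required sequences.

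The main obstacle is precisely the structural observation made above: Theorem \ref{thm:representation-matrix} requires the target field to vanish in a neighbourhood of $\{\nabla^2 v = 0\}$, while $A_{\min}$ generically does not. Without some such reduction the representation would fail on the body-type regions, where pointwise symmetric gradients are constrained by Saint--Venant compatibility and do not exhaust arbitrary $L^2$-fields. It is only because $\check{B}$ is $x'$-independent that $A_{\min}$ becomes constant on $\{\nabla^2 v = 0\}$ and can be absorbed there into the single symmetric gradient of the affine map $A_0 x'$, enabling the cutoff-and-apply-Theorem-\ref{thm:representation-matrix} scheme to close.
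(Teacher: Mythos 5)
Your proof follows essentially the same route as the paper: you observe that $A_{\min}$ is constant on $\{\nabla^2 v = 0\}$ (because $\check B$ is $x'$-independent), absorb that constant $A_0$ into a symmetric gradient of the affine map $x' \mapsto A_0 x'$, approximate $\tilde{A} = A_{\min} - A_0$ by smooth symmetric fields vanishing near $\{\nabla^2 v = 0\}$, and invoke Theorem \ref{thm:representation-matrix}. The paper simply posits the existence of such approximants $A_k$ with $\|A_k - \tilde A\|_{L^2} < 1/k$, while you construct them explicitly via a cutoff $\chi_\varepsilon$ supported away from $N_\varepsilon$; this is a reasonable and slightly more self-contained variant.

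One small slip: you justify $A^\varepsilon \to \tilde A$ in $L^2$ by claiming $|N_{2\varepsilon}| \to 0$, but this fails whenever $\{\nabla^2 v = 0\}$ has positive measure (as it typically does, since $v \in \mathcal V_0$ is affine on entire ``bodies''); $|N_{2\varepsilon}|$ then converges to $|\{\nabla^2 v=0\}|>0$. The conclusion is nonetheless correct: the difference $(1-\chi_\varepsilon)\tilde A$ vanishes on $\{\nabla^2 v = 0\}$ (where $\tilde A = 0$) and outside $N_{2\varepsilon}$, hence is supported in $N_{2\varepsilon}\setminus\{\nabla^2 v = 0\}$, whose measure does tend to $0$ by continuity from above since $\{\nabla^2 v = 0\}$ is closed; equivalently, $(1-\chi_\varepsilon)\tilde A \to 0$ pointwise everywhere and is dominated by $|\tilde A|$, so dominated convergence applies directly. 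The reason as stated would fail, so it needs this adjustment, but the architecture of the argument is correct and matches the paper's.
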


\begin{proof}
  Let $k \in \mathbb{N}$ be arbitrary. First, on the set $\{ \nabla^2 v = 0
  \}$ we trivially have $A_{\min} \equiv A_0$ a constant matrix. Now let $A_k
  \in C^{\infty} (\overline{\omega} ; \mathbb{R}^{2 \times 2})$ with support  
  in $\{ \nabla^2 v \neq 0 \}$ such that
  \[ \| A_k - (A_{\min} - A_0) \|_{L^2 (\omega ; \mathbb{R}^{2 \times 2})} <
     \tfrac{1}{k} \]
  and use Theorem \ref{thm:representation-matrix} to pick smooth $\alpha_k,
  \tilde{g}_k$ on $\overline{\omega}$ with 
  \[ A_k = \grs  \tilde{g}_k + \alpha_k \nabla^2 v. \]
  Set $g_k (x') = \tilde{g}_k (x') + A_0 x'$. Then:
  \[ \left\| \grs g_k + \alpha_k \nabla^2 v - A_{\min} \right\|_{L^2} =
     \left\| \grs  \tilde{g}_k + \alpha_k \nabla^2 v - (A_{\min} - A_0)
     \right\|_{L^2} < \tfrac{1}{k} .  \]
\end{proof}

%-----------------------------------------------------------------------------------------
\section*{Acknowledgements}

We are grateful to Peter Hornung for the help provided with the proof of Theorem~\ref{thm:density-singular-hessian}. This work was financially supported by project 285722765 of the Deutsche Forschungsgemeinschaft (DFG, German Research Foundation), ``{\tmem{Effektive Theorien und Energie minimierende Konfigurationen für
heterogene Schichten}}''.

%-----------------------------------------------------------------------------------------

\bibliography{thesis} 
\bibliographystyle{abbrv}

\end{document}